\newcommand{\leqnomode}{\tagsleft@true}
\newcommand{\reqnomode}{\tagsleft@false}
\newcommand{\dichi}{\vec{\chi}}
\newenvironment{subproof}[1][\proofname]{%
  \begin{proof}[#1]%
}{%
  \end{proof}%
}
\def\longbox#1{\parbox{0.85\textwidth}{#1}}
\newcommand{\sta}[1]{\vspace*{-0.6cm}
  \begin{equation}
    \longbox{{\emph{#1}}}
\end{equation}}
\def\longbox#1{\parbox{0.85\textwidth}{#1}}
\title{On heroes in digraphs with forbidden induced forests}
\author[1]{Alvaro Carbonero}
\author[1]{Hidde Koerts}
\author[2]{Benjamin Moore}
\author[1]{Sophie Spirkl\thanks{Emails: (ar2carbonerogonzales, hkoerts, sspirkl)@uwaterloo.ca, brmoore@iuuk.mff.cuni.cz \\
Spirkl: We acknowledge the support of the Natural Sciences and Engineering Research Council of Canada (NSERC), [funding reference number RGPIN-2020-03912]. Cette recherche a été financée par le Conseil de recherches en sciences naturelles et en génie du Canada (CRSNG), [numéro de référence RGPIN-2020-03912]. This project was funded in part by the Government of Ontario. \\
Benjamin Moore is supported by project 22-17398S (Flows and cycles in graphs on surfaces) of the Czech Science Foundation.\\
Some of this material appeared in Carbonero's master's thesis. 
}}
\affil[1]{University of Waterloo, Department of Combinatorics and Optimization, Waterloo, Canada}
\affil[2]{Charles University, Institute of Computer Science, Prague, Czech Republic}
\date{\today}
\newtheorem{thm}{Theorem}[section]
\newtheorem{conjecture}[thm]{Conjecture}
\newtheorem{lemma}[thm]{Lemma}
\newtheorem{question}[thm]{Question}
\begin{document}
\maketitle
\begin{abstract}
We continue a line of research which studies which hereditary families of digraphs have bounded dichromatic number. For a class of digraphs $\mathcal{C}$, a hero in $\mathcal{C}$ is any digraph $H$ such that $H$-free digraphs in $\mathcal{C}$ have bounded dichromatic number. We show that if $F$ is an oriented star of degree at least five, the only heroes for the class of $F$-free digraphs are transitive tournaments. For oriented stars $F$ of degree exactly four, we show the only heroes in $F$-free digraphs are transitive tournaments, or possibly special joins of transitive tournaments. Aboulker et al. characterized the set of heroes of $\{H, K_{1} + \vec{P_{2}}\}$-free digraphs almost completely, and we show the same characterization for the class of $\{H, rK_{1} + \vec{P_{3}}\}$-free digraphs. Lastly, we show that if we forbid two ``valid" orientations of brooms, then every transitive tournament is a hero for this class of digraphs.
\end{abstract}

\maketitle

\section{Introduction}

Throughout this paper, (di)graphs are finite and simple. In particular, for digraphs, between every two vertices $u$ and $v$, at most one of $uv$ and $vu$ is present. 

We will be interested in the dichromatic number of families of digraphs with forbidden induced subgraphs. Recall that a (di)graph $H$ is an \textit{induced subgraph} of a (di)graph $G$ if by deleting vertices of $G$ we obtain a (di)graph isomorphic to $H$. Equivalently, we say $H$ is an induced subgraph of $G$ if there exists a set $S\subseteq V(G)$ such that $G[S]$ is isomorphic to $H$. We call $S$ a \textit{copy} of $H$ in $G$. If $G$ has no copy of $H$, then we say $G$ is \textit{$H$-free}. Furthermore, if $\mathcal{C}$ is a set of (di)graphs, then $G$ is \textit{$\mathcal{C}$-free} if for every $H\in\mathcal{C}$, we have that $G$ is $H$-free. 

For a natural number $k$, we write $[k]$ for the set $\{1, \dots, k\}$. Given a digraph $D$, a \emph{$k$-dicolouring} of $D$ is a function $f : V(G) \rightarrow [k]$ such that for every $i \in [k]$, the induced subdigraph $D[f^{-1}(i)]$ is acyclic; in other words, no directed cycle of $D$ is monochromatic with respect to $f$. The \emph{dichromatic number $\dichi(D)$}, introduced by Neumann-Lara in \cite{first-dicoloring}, is the minimum $k$ such that $D$ has a $k$-dicolouring. If $X \subseteq V(D)$, we also use $\dichi(X)$ to mean $\dichi(D[X])$. One may compare this definition to the \textit{chromatic number}, denoted $\chi(G)$, which is the minimum $k$ such that there is a mapping $f:V(G) \to [k]$ where for all edges $e=xy$, $f(x) \neq f(y)$.  

We are interested in understanding for which families $\mathcal{F}$ all $\mathcal{F}$-free graphs have bounded dichromatic number. To this end, let us say that $\mathcal{F}$ is \emph{$\dichi$-finite} if there is a constant $c$ such that all $\mathcal{F}$-free digraphs have dichromatic number at most $c$. We consider the following question, which was first systematically studied by Aboulker, Charbit, and Naserasr \cite{gyarfas-sumner-digraphs}: 
\begin{question} \label{q:main} Which finite families $\mathcal{F}$ of digraphs are $\dichi$-finite?  
\end{question}

Consider the family of tournaments: that is, the family of graphs which is obtained by orienting cliques. It is easy to see that tournaments are exactly the class of digraphs which forbid $2K_{1}$, that is the graph consisting of two isolated vertices. It is also well-known that there exist tournaments of arbitrarily large dichromatic number; in other words, $\{2K_1\}$ is not $\dichi$-finite. A natural question is: for which digraphs $H$ is $\{2K_1, H\}$ a $\dichi$-finite family? This motivates the definition of heroes.

We say that $H$ is a \emph{hero} in $\mathcal{F}$-free digraphs if $\{H\} \cup \mathcal{F}$ is $\dichi$-finite. Heroes in tournaments are often just called heroes. A seminal paper of Berger et al.\ \cite{heroes-characterization} completely characterizes heroes in tournaments. We need some definitions before we can fully state the theorem.

For graphs and digraphs $D_1$ and $D_2$, we use $D_1+D_2$ to denote the disjoint union of $D_1$ and $D_2$, and we use $rD_1$ for an integer $r\geq 0$ to denote the disjoint union of $r$ copies of $D_1$.   A tournament is \emph{transitive} if it is acyclic, and a transitive tournament on $k$ vertices is denoted as $TT_k$. For two digraphs $D_1$ and $D_2$, we define $D_1 \Rightarrow D_2$ to be the digraph arising from $D_1 + D_2$ by adding all arcs $d_1d_2$ with $d_i \in V(D_i)$ for $i \in \{1, 2\}$. Furthermore, for digraphs $D_1, D_2, D_3$, we define $\Delta(D_1, D_2, D_3)$ as the digraph arising from $D_1 + D_2 + D_3$ by adding all arcs $d_id_j$ with $d_i \in V(D_i), d_j \in V(D_j)$ and $(i, j) \in \{(1, 2), (2, 3), (3, 1)\}$. For convenience, if $D_i$ is a $k$-vertex transitive tournament, we write $k$ for $D_i$ in this construction. 

Now we can state the characterization of heroes in tournaments:
\begin{thm}[Berger, Choromanski, Chudnovsky, Fox, Loebl, Scott, Seymour, and Thomassé \cite{heroes-characterization}]
\label{k_2_cooperates}
   A digraph $H$ is a hero in tournaments if and only if one of the following holds:
    \begin{itemize}
        \item $H=K_1$;
        \item $H=H_1\Rightarrow H_2$ where $H_1$ and $H_2$ are heroes in tournaments; or
        \item $H = \Delta(1, H_1, m)$ or $H = \Delta(1, m, H_1)$ where $m\geq 1$ and $H_1$ is a hero in tournaments.
    \end{itemize}
\end{thm}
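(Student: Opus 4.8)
The plan is to prove both implications simultaneously by induction on $|V(H)|$, after the standard reduction to strongly connected $H$. Since every directed cycle of a tournament $T$ lies inside one strong component, $\dichi(T)=\max_{C}\dichi(T[C])$ over the strong components $C$ of $T$; and a tournament $H$ fails to be strongly connected exactly when $H=H_1\Rightarrow H_2$ for nonempty induced subtournaments $H_1,H_2$ (take $H_1$ a source strong component, $H_2$ the rest). As $H_1,H_2$ are then induced subtournaments of $H_1\Rightarrow H_2$, each is a hero whenever $H_1\Rightarrow H_2$ is; so for non-strongly-connected $H$ the ``only if'' direction is immediate, and all the content sits in the strongly connected case.

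For the ``if'' direction, $K_1$ is a hero because a $K_1$-free tournament is empty, and the transitive tournament $TT_k$ is a hero because every tournament on $2^{k-1}$ vertices contains it. The substantive claims are: (i) if $H_1,H_2$ are heroes then so is $H_1\Rightarrow H_2$; and (ii) if $H_1$ is a hero and $m\geq 1$ then $\Delta(1,H_1,m)$ is a hero (hence so is the reverse $\Delta(1,m,H_1)$, since reversing all arcs preserves both the dichromatic number and hero status). For (i) I would establish a \emph{splitting lemma}: for every $c$ there is $N$ such that every tournament $T$ with $\dichi(T)\geq N$ has disjoint sets $A,B$ with all arcs directed from $A$ to $B$ and $\dichi(T[A]),\dichi(T[B])\geq c$. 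Granting it and choosing $c$ larger than the dichromatic bounds that witness $H_1$ and $H_2$ being heroes, an $(H_1\Rightarrow H_2)$-free tournament with $\dichi\geq N$ would contain an induced $H_1$ in $A$ and an induced $H_2$ in $B$; as $A$ is complete to $B$, together they induce $H_1\Rightarrow H_2$ — a contradiction, so the dichromatic number is below $N$. The splitting lemma itself I would prove by an inductive argument, passing to a strong component and peeling off a dominating set of large dichromatic number via a suitable linear order of the vertices.

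Claim (ii) is the crux and I expect it to be the principal obstacle. I would prove it by induction on $|H_1|$, invoking the full characterization — both directions, as the induction hypothesis — for all tournaments smaller than $\Delta(1,H_1,m)$. The base case $H_1=K_1$ is, up to the cyclic symmetry of $\Delta$, exactly the statement that $\Delta(1,k,1)$ is a hero for every $k$, and this single statement is the technical heart of the theorem: one must bound $\dichi(T)$ for a $\Delta(1,k,1)$-free tournament $T$, which I would do by reducing to $T$ strongly connected and then carrying out a careful structural analysis — examining levels of a linear extension, iteratively removing transitive subtournaments, and using that forbidding $\Delta(1,k,1)$ sharply restricts how a high-$\dichi$ piece can sit next to such a level. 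For the inductive step, since $H_1$ is a hero strictly smaller than $\Delta(1,H_1,m)$, the induction hypothesis gives that $H_1$ is $K_1$, or $A\Rightarrow B$, or $\Delta(1,A,\ell)$ (up to reversal), with $A,B$ heroes; in each case $\Delta(1,H_1,m)$-freeness of a tournament $T$ translates into structural constraints relative to the smaller heroes $A,B$, which, together with the $\Rightarrow$ and $\Delta$ results already available for those smaller pieces, bound $\dichi(T)$. Arranging these case analyses so that everything bottoms out at the base case and the splitting lemma is the heaviest part of the argument.

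For the ``only if'' direction with $H$ strongly connected and $|V(H)|\geq 2$: if some vertex $v$ has $N^+(v)$ complete to $N^-(v)$ and at least one of $H[N^+(v)],H[N^-(v)]$ transitive, then $H=\Delta(1,H_1,m)$ or $H=\Delta(1,m,H_1)$, where $H_1$ is the tournament induced on the other side and $m\geq 1$ its size (both sides are nonempty by strong connectivity); moreover $H_1$, being an induced subtournament of the hero $H$, is a hero — so we are done. Otherwise, a \emph{structural lemma} (pure tournament theory, no reference to colouring) shows that $H$ contains, as an induced subtournament, a member of a short explicit list $\mathcal{L}$ of small tournaments — among them the Paley tournament on five vertices (quadratic residues modulo $5$) and $\Delta(2,2,2)$ — none of which is a hero. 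That each member of $\mathcal{L}$ is not a hero is proved separately, by exhibiting (starting from the known existence of tournaments of unbounded dichromatic number) a family of tournaments with unbounded dichromatic number and no induced copy of it, via recursive substitution constructions. Since ``contains a non-hero as an induced subtournament'' is inherited upward, $H$ would then not be a hero, contradicting the assumption; hence $H$ has the prescribed form, and the induction closes.
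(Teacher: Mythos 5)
The paper does not prove this statement. Theorem~\ref{k_2_cooperates} is quoted verbatim as a known result of Berger, Choromanski, Chudnovsky, Fox, Loebl, Scott, Seymour and Thomass\'e, with a citation to \cite{heroes-characterization}; there is no in-paper proof for your proposal to be compared against.

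Evaluated on its own terms, your outline has the right high-level shape for the Berger et al.\ argument: reduce to the strongly connected case, use a splitting lemma to handle the $H_1 \Rightarrow H_2$ operation, treat $\Delta(1,H_1,m)$ as the crux, and for the converse appeal to a structural lemma that forces a copy of one of finitely many minimal non-heroes, each excluded by an explicit substitution construction. But every load-bearing claim is asserted rather than argued, and the one you call ``the principal obstacle'' is also the one where the plan is least convincing. Your inductive step proposes that when $H_1 = A \Rightarrow B$ or $H_1 = \Delta(1,A,\ell)$, the assumption that $T$ is $\Delta(1,H_1,m)$-free ``translates into structural constraints relative to the smaller heroes $A,B$.'' This is not justified: a $\Delta(1,\Delta(1,A,\ell),m)$-free tournament is not visibly constrained as an $A$-free or $\Delta(1,A,\ell)$-free tournament (it may contain both freely), so the induction hypothesis for the smaller pieces does not obviously apply, and nested $\Delta$'s do not decompose along the lines you suggest. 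The published proof does not, to my knowledge, run a structural induction on the shape of $H_1$ in this way; it establishes that $\Delta(1,k,H)$ is a hero whenever $H$ is by a considerably more delicate argument, and the base case $\Delta(1,k,1)$, while nontrivial, is not where the main difficulty sits. The converse-direction structural lemma and the non-hero constructions are likewise stated without any substance (and, minor point, there is no Paley tournament on five vertices, since $5 \equiv 1 \pmod 4$; you presumably mean the $5$-vertex regular tournament). In short, you have a credible table of contents for a proof, but the sections that make the theorem hard are missing, and I would not expect the $\Delta$ induction to close as sketched.
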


While this may seem like a very special case of Question \ref{q:main}, it is particularly relevant due to a theorem of Aboulker, Charbit, and Naserasr \cite{gyarfas-sumner-digraphs}, who showed that if $\mathcal{F}$ is $\dichi$-finite, then $\mathcal{F}$ contains both an oriented forest (a digraph whose underlying undirected graph is a forest) and a hero in tournaments. They further showed: 
\begin{thm}[Aboulker, Charbit, and Naserasr \cite{gyarfas-sumner-digraphs}]
    If $\{H, F\}$ is $\dichi$-finite where $H$ is a hero in tournaments and $F$ is an oriented forest, then either $H$ is a transitive tournament, or the underlying undirected graph of $F$ is a disjoint union of stars. 
\end{thm}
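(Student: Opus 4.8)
The plan is to prove the contrapositive: assuming $H$ is a hero in tournaments that is \emph{not} a transitive tournament and $F$ is an oriented forest whose underlying graph is \emph{not} a disjoint union of stars, I will construct, for every $k$, an $\{H,F\}$-free digraph of dichromatic number at least $k$. \textit{First I would reduce $H$ to $\vec{C_3}$.} By \Cref{k_2_cooperates} every hero in tournaments is a tournament, since the operations $D_1\Rightarrow D_2$ and $\Delta(\cdot,\cdot,\cdot)$ send tournaments to tournaments; a tournament is transitive if and only if it has no directed cycle, and a tournament with a directed cycle contains a directed triangle, which induces a copy of $\vec{C_3}$. Hence $\vec{C_3}$ is an induced subdigraph of $H$, so every $\vec{C_3}$-free digraph is $H$-free, and it is enough to find $\vec{C_3}$-free, $F$-free digraphs of unbounded dichromatic number.

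\textit{Next I would reduce $F$ to an orientation of a four-vertex path.} If $F'$ is an induced subdigraph of $F$, then every $F'$-free digraph is $F$-free, so it suffices to treat some fixed induced subdigraph of $F$. The underlying graph of $F$ is a forest that is not a disjoint union of stars, so it has a component which is a tree of diameter at least three; such a tree contains $P_4$ as an induced subgraph, and the four corresponding vertices of $F$ induce one of the three orientations of $P_4$, namely the directed path $\vec{P_4}$, the path $a\to b\to c\leftarrow d$, or the path $a\to b\leftarrow c\to d$. So the task becomes: for each such orientation $P^{*}$, exhibit $\vec{C_3}$-free, $P^{*}$-free digraphs of unbounded dichromatic number.

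\textit{The main work is this last step.} A natural starting point is a family of digraphs of unbounded dichromatic number whose underlying graphs have unbounded girth (such families exist, for instance via a probabilistic argument in the spirit of Erd\H{o}s's high-girth high-chromatic-number graphs); these are automatically $\vec{C_3}$-free, but since $P^{*}$ is itself a forest they may contain induced copies of $P^{*}$, which must then be destroyed. For the two orientations other than $\vec{P_4}$ a single global condition suffices: both $a\to b\to c\leftarrow d$ and $a\to b\leftarrow c\to d$ disappear as induced subdigraphs as soon as every two vertices with a common out-neighbour are made adjacent (equivalently, as soon as every in-neighbourhood induces a transitive tournament). For $\vec{P_4}$ one has to use the word ``induced'' essentially: forbidding $\vec{P_4}$ as a non-induced subdigraph already forces dichromatic number at most $3$ by a directed analogue of the Gallai--Roy bound, $\dichi(D)\le 1+\ell(D)$ where $\ell(D)$ is the number of arcs of a longest directed path in $D$. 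Instead, I would patch each induced directed path on four vertices by adding an arc between its two endpoints — which in a digraph of large girth lies on no triangle, so $\vec{C_3}$-freeness is preserved — and then show, via a suitable ordering of these additions (or a potential argument), that iterating this terminates without creating new induced copies of $\vec{P_4}$ and without losing too much dichromatic number.

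The reductions in the first two steps are routine, so I expect the whole difficulty to lie in the third: building, for each of the three orientations of $P_4$ (the directed path being the hardest), digraphs that are simultaneously $\vec{C_3}$-free, free of that orientation, and of large dichromatic number — in particular, controlling exactly which induced copies of $P^{*}$ are created when one modifies the base construction. That bookkeeping is, I believe, the technical core of the theorem.
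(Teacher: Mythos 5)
The paper does not give its own proof of this statement; it is quoted from Aboulker, Charbit, and Naserasr~\cite{gyarfas-sumner-digraphs}, so there is no internal argument to compare against. Your two reductions are correct and standard: by the Berger et al.\ characterization every hero in tournaments is a tournament, a non-transitive tournament contains an induced $\vec{C_3}$, and a forest whose underlying graph is not a disjoint union of stars has a tree component of diameter at least three and hence contains an induced $P_4$; so it does suffice to exhibit, for each of the three orientations $P^*$ of $P_4$ (up to isomorphism and arc-reversal, with $\vec{C_3}$ being self-converse), $\{\vec{C_3},P^*\}$-free digraphs of unbounded dichromatic number.

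The gap is the entire constructive step, which is also the whole content of the theorem. For $\rightarrow\rightarrow\leftarrow$ and $\rightarrow\leftarrow\rightarrow$ you correctly note that ``every two vertices with a common out-neighbour are adjacent'' (equivalently, every in-neighbourhood is a tournament, hence transitive under $\vec{C_3}$-freeness) kills all induced copies, but you produce no digraph that is $\vec{C_3}$-free, has this property, and has large dichromatic number. It is not clear that ``start from a high-girth digraph and add arcs until in-neighbourhoods are cliques'' preserves $\vec{C_3}$-freeness: the closure forces a linear order on each in-neighbourhood, and these orders must agree on overlapping in-neighbourhoods, which need not be achievable without creating cyclic triangles, and no argument is offered. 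For $\vec{P_4}$ the sketch is weaker still: the instant you add an arc between the endpoints of one induced $\vec{P_4}$, the underlying graph acquires a $4$-cycle, so the justification ``large girth, hence the new arc is on no triangle'' applies only to the first round of additions; and you give no termination, ordering, or invariant argument for the proposed iteration, which in general does create new induced $\vec{P_4}$'s. The known proof proceeds instead via explicit, highly structured constructions (oriented shift graphs with carefully placed back-arcs and tournament-inducing arcs, of the same flavour as the constructions this paper builds for its own results on oriented stars), precisely because a generic ``patch a random high-girth digraph'' scheme gives no simultaneous control over $\vec{C_3}$-freeness, $P^*$-freeness, and dichromatic number.
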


What happens in the case where $H$ is a transitive tournament and $F$ is an oriented forest? Chudnovsky, Scott and Seymour \cite{stars-dichi-bounded} showed that in the special case where $F$ is an \emph{oriented star} of degree $t$, that is, an orientation of $K_{1,t}$, the set $\{H, F\}$ is $\dichi$-finite.

\begin{thm}[Chudnovsky, Scott, and Seymour \cite{stars-dichi-bounded}]
\label{stars-dichi-bounded}
    If $F$ is an oriented star and $H$ is a transitive tournament, $\{H, F\}$ is $\dichi$-finite. 
\end{thm}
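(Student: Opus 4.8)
The plan rests on two facts. First, every tournament on $2^{k-1}$ vertices contains $TT_k$, so a $TT_k$-free digraph $D$ has underlying graph $G$ with $\omega(G)<2^{k-1}=:s$; and since every proper colouring of $G$ is a dicolouring of $D$, it is enough to bound $\dichi(D)$ using $\omega(G)<s$ together with $F$-freeness. Second, I will use the standard reductions that $\dichi$ equals the maximum of $\dichi$ over strongly connected components, and that a $\dichi$-critical digraph with $\dichi\ge 2$ is strongly connected and has minimum in- and out-degree at least $\dichi-1$. As a warm-up, if $F$ is the orientation of $K_{1,t}$ with all arcs leaving the centre (or all entering it), then $F$-freeness says exactly $\alpha(G[N^+(v)])<t$ (resp.\ $\alpha(G[N^-(v)])<t$) for every $v$; combined with $\omega<s$, Ramsey's theorem bounds $|N^+(v)|$ (resp.\ $|N^-(v)|$), so $D$ has bounded maximum out- (resp.\ in-) degree, hence $G$ is $O(1)$-degenerate and $\dichi(D)\le\chi(G)=O(1)$. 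So from now on I assume the centre of $F$ has $a\ge1$ out-leaves and $b\ge1$ in-leaves with $a+b=t$, and WLOG $a\le b$.

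Suppose for contradiction that $\dichi$ is unbounded on $\{TT_k,F\}$-free digraphs. Passing to a $\dichi$-critical subdigraph, there is a strongly connected $\{TT_k,F\}$-free digraph $D'$ whose minimum in- and out-degree is at least $r$, where $r$ may be taken as large as we please; in particular, by Ramsey's theorem, for $r$ large every in-neighbourhood and every out-neighbourhood of $D'$ contains a stable set of $G$ of size $\ell$, with $\ell\to\infty$ as $r\to\infty$. It now suffices to find an induced copy of $F$ in $D'$.

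The core is a local analysis. Fix a vertex $v$ and stable sets $P\subseteq N^+(v)$, $Q\subseteq N^-(v)$ of size $\ell$. If there are $A\subseteq P$, $B\subseteq Q$ with $|A|=a$, $|B|=b$ and no edge of $G$ between $A$ and $B$, then $\{v\}\cup A\cup B$ induces $F$ ($v$ dominates $A$, is dominated by $B$, and $A\cup B$ is stable) --- a contradiction; so the bipartite complement of $G[P,Q]$ has no $K_{a,b}$. By the K\H{o}v\'ari--S\'os--Tur\'an theorem this complement is sparse, and a second application of the same theorem produces $P'\subseteq P$, $Q'\subseteq Q$ of size $m$ (with $m\to\infty$ as $\ell\to\infty$) such that every pair of $P'\times Q'$ is joined by an arc. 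Now if some $q\in Q'$ dominates $\ge a$ vertices of $P'$ and is dominated by $\ge b$ of them, then (using that $P'$ is stable) $\{q\}$ with $a$ out-leaves and $b$ in-leaves in $P'$ induces $F$; hence $q$ either dominates all but $<b$ of $P'$ or is dominated by all but $<a$ of $P'$, and analogously for each $p\in P'$ with respect to $Q'$. A majority vote (over $P'$ and over $Q'$) together with an edge-count --- the configurations where both majorities ``dominate'' or both are ``dominated'' force $\min(|P^*|,|Q^*|)<t$, impossible for $m$ large --- leaves, on large stable subsets $P^*\subseteq P'$, $Q^*\subseteq Q'$, exactly one picture around $v$: a \emph{transitive} one, $Q^*\to v\to P^*$ with all but boundedly many arcs going $Q^*\to P^*$; or a \emph{cyclic} one, $v\to P^*\to Q^*\to v$ with all but boundedly many arcs going $P^*\to Q^*$. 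Thus every vertex of $D'$ is transitive-type or cyclic-type.

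The final step --- and the one I expect to be the main obstacle --- is to turn ``every vertex of the strongly connected digraph $D'$ carries such a configuration'' into a forbidden subgraph. The idea is to propagate layer by layer. A transitive-type $v$ carries a stable set $Q^*_v\subseteq N^-(v)$ dominating, up to $<t$ exceptions, a stable set $P^*_v\subseteq N^+(v)$; choosing a transitive-type $u\in Q^*_v$ and using that $u$ dominates almost all of $\{v\}\cup P^*_v$, one arranges $u$'s out-side to contain most of $\{v\}\cup P^*_v$, lengthening the transitive chain to $Q^*_u,u,v,P^*_v$. Iterating $k$ times would build a blow-up of $TT_k$ with stable classes, which contains $TT_k$ --- a contradiction. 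Symmetrically, a long chain of cyclic-type vertices would build a blow-up of a directed cycle of length at least $4$ with stable classes; taking a centre in one class, out-leaves in the next and in-leaves in the previous (mutually non-adjacent since these classes lie two apart) yields an induced $F$ --- again a contradiction. The technical heart is to run this propagation for a bounded number of steps while controlling, at each step, the $O(1)$ exceptional arcs (so they do not spoil the transitive tournament, or the cycle, being assembled), the shrinking of the stable sets, and --- in the cyclic case --- the fact that the cycle produced must genuinely be long enough (length $\ge 4$) rather than merely recycling triangles; strong connectivity of $D'$ is what forces the two types to meet and the process to close up. Everything preceding this step is routine: Ramsey and K\H{o}v\'ari--S\'os--Tur\'an estimates together with the criticality reductions.
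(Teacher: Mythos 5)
This theorem is quoted from Chudnovsky, Scott, and Seymour; the present paper cites it without proof, so there is no in-paper argument to compare against, and I evaluate your proposal on its own. Your reductions are sound: $TT_k$-freeness bounds $\omega(G)$, the warm-up case (all leaves on one side) via bounded degeneracy is correct, passing to a $\dichi$-critical strongly connected subdigraph with large minimum in- and out-degree is standard, and the local Ramsey plus K\H{o}v\'ari--S\'os--Tur\'an analysis yielding, at each vertex $v$, a transitive picture $Q^*\to v\to P^*$ or a cyclic picture $v\to P^*\to Q^*\to v$ with boundedly many exceptional arcs, is a legitimate (if somewhat compressed) argument.

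The propagation step, however, is not a technicality you can wave at --- it contains two genuine gaps. In the transitive case, you write that for $u\in Q^*_v$ one can ``arrange $u$'s out-side to contain most of $\{v\}\cup P^*_v$.'' But $P^*_u$ is by construction a \emph{stable} set in $N^+(u)$, whereas $\{v\}\cup P^*_v$ is not stable ($v$ is adjacent to every vertex of $P^*_v$); so $P^*_u$ cannot absorb both $v$ and a large part of $P^*_v$. More to the point, the K\H{o}v\'ari--S\'os--Tur\'an analysis at $u$ produces sets $P^*_u,Q^*_u$ with no guaranteed adjacency relation to the vertices already placed in the chain, and since any clique inside $Q^*\cup\{v\}\cup P^*$ has size at most $3$ (both $Q^*$ and $P^*$ are stable), the single local picture does not hand you more than $TT_3$; you must iterate $k-2$ times with control over all pairwise adjacencies, and nothing in the proposal shows how to maintain that control. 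In the cyclic case the situation is worse: a cyclic-type vertex yields a triangle blow-up $v\to P^*\to Q^*\to v$, and it is not explained how iterating cyclic-type vertices stretches the cycle length beyond $3$ --- the natural identifications simply produce further triangles. And even granting a blow-up of a $C_\ell$ with $\ell\geq 4$, your induced $F$ requires the classes two steps apart to be \emph{anticomplete} in $D'$, which the blow-up relation does not assert (it only says which arcs are present, not which are absent); extra arcs between $C_i$ and $C_{i+2}$ would destroy the induced copy of $F$. Finally, ``transitive-type'' versus ``cyclic-type'' is a property of a chosen $(P,Q,P',Q')$, not an intrinsic property of the vertex, so the type of the newly visited vertex is not under your control during propagation. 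As written the proposal is a plausible opening but stops well short of a proof.
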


We show that if the star $F$ has (undirected) degree at least five, then the only heroes in $F$-free digraphs are transitive tournaments, showing that the theorem of Chudnovsky, Scott, and Seymour cannot be strengthened for these stars. 

\begin{restatable}{thm}{mainthree}
\label{main3}
    If $F$ is an oriented star of degree at least 5, then $\{H, F\}$ is $\dichi$-finite only if $H$ is a transitive tournament.
\end{restatable}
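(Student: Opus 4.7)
The plan is to prove the contrapositive: if $H$ is not a transitive tournament, then I will exhibit $\{H,F\}$-free digraphs of arbitrarily large dichromatic number. The first step is a reduction to the tournament case. Since every two vertices of a tournament are adjacent and the leaves of $F$ form an independent set of size at least five, no tournament on at least two vertices can contain an induced copy of $F$. Hence every tournament is $F$-free, so the class of $\{H,F\}$-free digraphs contains every $H$-free tournament. If $\{H,F\}$ were $\dichi$-finite, then $H$-free tournaments would in particular have bounded dichromatic number; that is, $H$ would be a hero in tournaments, and Theorem~\ref{k_2_cooperates} would apply to~$H$.

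Next I would perform a short induction on the recursive construction of Theorem~\ref{k_2_cooperates} to show that any hero which is not a transitive tournament contains $\vec{C_3}$ as an induced subdigraph. The base case $H=K_1$ is itself a transitive tournament. The operation $H_1 \Rightarrow H_2$ produces a transitive tournament exactly when both $H_i$ are themselves transitive tournaments, so a non-transitive $H$ arising this way has a non-transitive $H_i$, which contains $\vec{C_3}$ by induction. Finally each of $\Delta(1,H_1,m)$ and $\Delta(1,m,H_1)$ immediately contains $\Delta(1,1,1) = \vec{C_3}$ as an induced subdigraph. Consequently it suffices to construct, for every $N$, a digraph $D_N$ that is simultaneously $\vec{C_3}$-free (hence $H$-free) and $F$-free with $\dichi(D_N) \geq N$.

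The heart of the proof is this construction. I would begin with a family of $\vec{C_3}$-free digraphs of arbitrarily large dichromatic number---such a family is available from recent work on induced-subdigraph analogues of the Gy\'arf\'as--Sumner conjecture---and attempt to enforce $F$-freeness by substituting each vertex of the base digraph with a transitive tournament of some fixed size $s$. Such a substitution preserves both the dichromatic number and the absence of an induced $\vec{C_3}$ (every directed triangle in the blow-up spans at least three parts and projects to a directed triangle in the base), and because each part of the blow-up is a tournament, any independent set contained in a vertex's neighborhood uses at most one vertex per part. This bounds the independence number of each neighborhood by roughly the degree of the corresponding base vertex plus one, and if that quantity is at most four then no induced $K_{1,5}$, and in particular no induced copy of $F$, can appear.

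The main obstacle is that the base digraph cannot simultaneously have bounded degree (needed for the blow-up to have every neighborhood independence number at most four) and arbitrarily large dichromatic number (since bounded-degree digraphs have bounded $\dichi$). Resolving this tension is the hard part: the construction will need to be refined, for instance using a non-uniform blow-up that densifies only the neighborhoods actually witnessing a potential copy of $F$, a recursive Mycielski- or Zykov-type construction tailored to preserve both $\vec{C_3}$-freeness and low-independence neighborhoods, or an alternative $\vec{C_3}$-free base whose own neighborhoods already have bounded independence number. Verifying that the resulting digraphs simultaneously are $\vec{C_3}$-free, $F$-free, and have unbounded $\dichi$ is where I expect the technical difficulty to lie.
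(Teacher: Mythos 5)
Your reduction is correct and matches the paper's: tournaments are $F$-free, so $H$ must be a hero in tournaments, and any non-transitive such hero contains $\vec{C_3}$ (the paper gets this even more directly from the fact that a non-transitive tournament contains a cyclic triangle), so everything comes down to exhibiting $\{\vec{C_3},F\}$-free digraphs of unbounded dichromatic number. But that construction is the entire content of the theorem, and your proposal does not supply it. You correctly diagnose that a uniform blow-up of a $\vec{C_3}$-free base by transitive tournaments cannot work --- controlling neighbourhood independence number forces bounded degree, which forces bounded $\dichi$ --- and then you leave the resolution as ``the hard part.'' That is a genuine gap, not a routine verification: none of the three fallback ideas you list is developed, and the one closest to what is actually needed (a $\vec{C_3}$-free digraph whose neighbourhoods already have independence number at most $4$) is precisely the nontrivial object to be built.

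For comparison, the paper's construction (Theorem~\ref{counterexample-1-0}) is not a blow-up. It takes the $7$-tuple shift graph, orients each edge in the shift direction ($X$), adds back-arcs from $(a,b,c,d,\bullet,\bullet,\bullet)$ to $(\bullet,\bullet,\bullet,a,b,c,d)$ ($Y$) so that an acyclic induced subdigraph has no directed $P_4$ in $X$ and hence, by Gallai--Hasse--Roy--Vitaver, $3$-colourable in the underlying shift graph --- this is what makes $\dichi$ unbounded via Erd\H{o}s's theorem. It then adds two further arc families $Z_1,Z_2$, turning the set of vertices sharing a common $4$-prefix (resp.\ $4$-suffix) into a tournament; together with the $X$- and $Y$-neighbours this covers each neighbourhood by four tournaments, killing every induced star of degree $5$. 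The delicate part, which your sketch has no analogue of, is Claim~\ref{counterexample-1-3}: a careful case analysis over which of $X$, $Y$, $Z_1\cup Z_2$ each arc of a putative cyclic triangle lies in, using the monotonicity of the middle coordinate $m(v)$ along $X$ and $Y$, to show no cyclic triangle is created. Without this construction and verification, the proof is incomplete.
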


In the case where $F$ is an orientation of a star of degree $4$, the only possible heroes are transitive tournaments and tournaments of the form $\Delta(1, m, m')$: 

\begin{restatable}{thm}{mainfive}
\label{main5}
    If $F$ is an oriented star of degree 4 and $\{H, F\}$ is $\dichi$-finite, then either $H$ is a transitive tournament or $H=\Delta(1, m, m')$ where $m, m'\geq 1$.
\end{restatable}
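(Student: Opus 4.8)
\emph{Proof idea.} The plan is to prove the contrapositive: assume $\{H,F\}$ is $\dichi$-finite, and show $H$ is a transitive tournament or $H=\Delta(1,m,m')$ with $m,m'\ge 1$. First I would reduce to the case that $H$ is a hero in tournaments. Since $F$ is an oriented star on five vertices it is an oriented forest, but it has nonadjacent vertices, so it is not a tournament and hence not a hero in tournaments; by the result of Aboulker, Charbit and Naserasr~\cite{gyarfas-sumner-digraphs} that every $\dichi$-finite family contains a hero in tournaments, $H$ must itself be such a hero, and in particular a tournament built from $K_1$ by the operations of Theorem~\ref{k_2_cooperates}. It then suffices to show: if $H$ is a hero in tournaments that is neither a transitive tournament nor $\Delta(1,m,m')$ with $m,m'\ge 1$, then $\{H,F\}$ is not $\dichi$-finite, and I would do this by exhibiting $F$-free digraphs of unbounded dichromatic number with no induced copy of $H$.

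The second step is a structural lemma: \emph{every hero $H$ in tournaments that is neither a transitive tournament nor $\Delta(1,m,m')$ with $m,m'\ge 1$ contains an induced $K_1\Rightarrow\vec{C_3}$ or an induced $\vec{C_3}\Rightarrow K_1$}, where $\vec{C_3}=\Delta(1,1,1)$. One first notes, by induction on $|V(H)|$ using Theorem~\ref{k_2_cooperates}, that every non-transitive hero already contains an induced $\vec{C_3}$ (take one vertex from each part when $H$ is a $\Delta$, and recurse into the non-transitive factor when $H=H_1\Rightarrow H_2$). Then one inducts again: if $H=H_1\Rightarrow H_2$ is not transitive, some $H_i$ is non-transitive and contains an induced $\vec{C_3}$, and adjoining a vertex of the other factor yields an induced $\vec{C_3}\Rightarrow K_1$ (if $H_1$ is the non-transitive factor) or $K_1\Rightarrow\vec{C_3}$ (if $H_2$ is); if $H=\Delta(1,H_1,m)$ or $\Delta(1,m,H_1)$ with $m\ge 1$ and $H$ is not of the excluded form, then $H_1$ must be non-transitive (else $H=\Delta(1,b,m)$ or $\Delta(1,m,b)$ with $b,m\ge 1$), so $H_1$ contains an induced $\vec{C_3}$ and the single vertex of the first part together with it induces $K_1\Rightarrow\vec{C_3}$ or $\vec{C_3}\Rightarrow K_1$ in $H$.

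The heart of the proof is the third step: \emph{for every oriented star $F$ of degree $4$ there is a family of $F$-free digraphs of unbounded dichromatic number containing no induced $K_1\Rightarrow\vec{C_3}$.} Reversing every arc turns such a family into $F'$-free digraphs of unbounded dichromatic number with no induced $\vec{C_3}\Rightarrow K_1$, where $F'$ is the reverse of $F$ and is again an oriented star of degree $4$; writing $S_{a,b}$ for the orientation of $K_{1,4}$ whose centre has out-degree $a$ and in-degree $b$, it thus suffices to carry out the construction for one representative of each of the three reversal-classes $\{S_{2,2}\}$, $\{S_{3,1},S_{1,3}\}$, $\{S_{4,0},S_{0,4}\}$. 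Granting these families, the argument finishes: by Step 2 the hero $H$ contains an induced $K_1\Rightarrow\vec{C_3}$ or $\vec{C_3}\Rightarrow K_1$, and the corresponding family then consists of $F$-free digraphs of unbounded dichromatic number with no induced $H$, contradicting $\dichi$-finiteness of $\{H,F\}$.

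The main obstacle is Step 3. The condition ``no induced $K_1\Rightarrow\vec{C_3}$'' says exactly that no out-neighbourhood contains a directed triangle, and being $F$-free for a degree-$4$ star forces bounded independence number inside the in- and/or out-neighbourhoods (for $S_{4,0}$ it forces $\alpha(D[N^+(v)])\le 3$ for every $v$; for $S_{0,4}$, $\alpha(D[N^-(v)])\le 3$; for $S_{2,2}$ a weaker mixed condition). These requirements pull against unbounded $\dichi$: one cannot use tournaments, since a tournament with no induced $K_1\Rightarrow\vec{C_3}$ has bounded dichromatic number (as $K_1\Rightarrow\vec{C_3}=K_1\Rightarrow\Delta(1,1,1)$ is a hero in tournaments by Theorem~\ref{k_2_cooperates}), and one cannot use digraphs whose neighbourhoods are large stable sets, since those contain an induced $F$. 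The construction therefore has to amplify the dichromatic number through a layered or iterated scheme while keeping every out-neighbourhood directed-triangle-free and of bounded independence number — for instance by arranging that out-neighbourhoods are unions of boundedly many transitive tournaments — and the bookkeeping that verifies, orientation by orientation, that neither an induced $K_{1,4}$ of the prescribed type nor an induced $K_1\Rightarrow\vec{C_3}$ survives is where the real work lies; I would expect $S_{4,0}$, $S_{3,1}$ and $S_{2,2}$ to require somewhat different treatments.
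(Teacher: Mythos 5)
Your Steps 1 and 2 align with the paper's argument: the reduction to heroes in tournaments is standard, and the structural lemma that a hero which is neither transitive nor $\Delta(1,m,m')$ contains $\vec{C_3}\Rightarrow K_1$ or $K_1\Rightarrow\vec{C_3}$ is exactly what the paper proves (these are its ``in-triangle'' $IT$ and ``out-triangle'' $OT$), by an almost identical case split on $H_1\Rightarrow H_2$ versus $\Delta(1,m,H')$/$\Delta(1,H',m)$. (A small simplification: you do not need induction to find a $\vec{C_3}$ in a non-transitive hero — any non-acyclic tournament contains one.)

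The gap is Step 3, which is the heart of the theorem and which you explicitly defer: you have described the constraints the construction must satisfy but have not produced a construction. Moreover, the plan you sketch is more laborious than necessary. You propose building a separate family for each reversal class $\{S_{2,2}\}$, $\{S_{3,1},S_{1,3}\}$, $\{S_{4,0},S_{0,4}\}$, tracking for each one which neighbourhoods need small independence number. The paper instead builds a \emph{single} family of digraphs $F_n$ (a decorated $5$-tuple shift graph with carefully added back-edges and tournament-inducing edge sets $Z_1,Z_2$) with the property that \emph{every} vertex's neighbourhood can be partitioned into three tournaments. This immediately implies no vertex has four pairwise non-adjacent neighbours, so $F_n$ is $F$-free for \emph{every} orientation of $K_{1,4}$ (indeed of $K_{1,t}$ for all $t\ge 4$) at once — there is no need to distinguish $S_{4,0}$, $S_{3,1}$, $S_{2,2}$ or to split into in- and out-neighbourhoods. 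The remaining verifications for that one construction are that $\dichi(F_n)\to\infty$ (via the Gallai--Hasse--Roy--Vitaver argument on back-edges) and that $F_n$ contains no $IT=\vec{C_3}\Rightarrow K_1$ even as a non-induced subgraph; the $OT$ case is then obtained by reversing all arcs, exactly as you suggest. So your plan would eventually work, but it would require you to invent three constructions where one suffices, and without any of them actually written down the proposal does not yet constitute a proof.
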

While the case when $H$ is a transitive tournament is resolved by Theorem \ref{stars-dichi-bounded}, the case when $H = \Delta(1, m, m')$ remains open. 

For oriented stars of degree $2$ and $3$, the full picture is not yet clear. For oriented stars of degree 3, we are not aware of any results aside from Theorem \ref{stars-dichi-bounded}.  For stars of degree $2$, which are isomorphic to $P_3$, Aboulker et al.\ \cite{complete-multipartite} obtain the following:

\begin{thm}[Aboulker, Aubian, and Charbit \cite{complete-multipartite}] \label{thm:p3}
    For every hero $H$ in tournaments, $\{H, \vec{P_3}\}$ is $\dichi$-finite. 
\end{thm}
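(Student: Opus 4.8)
The plan is to prove the following purely structural statement, from which Theorem~\ref{thm:p3} is immediate: every $\vec{P_3}$-free digraph $D$ satisfies
\[
  \dichi(D)\;=\;\max\bigl\{\dichi(T')\,:\,T'\text{ is an induced sub-tournament of }D\bigr\}.
\]
Granting this, if $D$ is $\{H,\vec{P_3}\}$-free then every induced sub-tournament of $D$ is $H$-free, and since $H$ is a hero in tournaments there is a constant $c(H)$ bounding the dichromatic number of all $H$-free tournaments; hence $\dichi(D)\le c(H)$, so $\{H,\vec{P_3}\}$ is $\dichi$-finite.

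To establish the displayed identity I would argue by induction on $|V(D)|$. Both sides are unchanged under passing to strong components and taking a maximum, so it suffices to treat strongly connected $D$; here I would use the structure of strongly connected $\vec{P_3}$-free digraphs, namely that such a $D$ on at least two vertices is either a tournament---in which case the identity holds trivially with $T'=D$---or else $D=T[\{D_v\}]$ is obtained by substituting proper induced subdigraphs $D_v$ into the vertices of a tournament $T$ with at least two vertices. This structural step reduces to the lemma that a strongly connected $\vec{P_3}$-free digraph possessing a non-adjacent pair of vertices has a non-trivial module: in a $\vec{P_3}$-free digraph every common neighbour of a non-adjacent pair either dominates both of them or is dominated by both, and a short case analysis of this relation (using strong connectivity) exhibits a non-trivial module. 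Via the standard modular-decomposition fact that the quotient is prime or degenerate, and since strong connectivity makes the quotient strongly connected while ruling out the degenerate (edgeless or transitive-tournament) cases, the quotient is a prime strongly connected $\vec{P_3}$-free digraph, which by the lemma has no non-adjacent pair and is therefore the desired tournament $T$.

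The core of the proof is the inductive step when $D=T[\{D_v\}]$; write $\omega^\ast$ for the right-hand side of the identity applied to $D$. By the induction hypothesis, for each $v$ the digraph $D_v$ contains an induced sub-tournament $R_v$ with $\dichi(R_v)=\dichi(D_v)$. The decisive observation is that the witness $T^\ast:=T[\{R_v\}]$---a substitution of tournaments into a tournament---is itself a tournament and is an induced subdigraph of $D$, so $\dichi(T^\ast)\le\omega^\ast$. Fixing an optimal dicolouring $\psi$ of $T^\ast$ using colours in $[\omega^\ast]$ and setting $C_v:=\psi(V(R_v))$, one checks that $|C_v|\ge\dichi(R_v)=\dichi(D_v)$ because $\psi$ restricts to a dicolouring of $R_v$, and that for each colour $j$ the set $\{v:j\in C_v\}$ induces an acyclic subdigraph of $T$, since a directed cycle of $T$ supported there would lift, through the modules, to a monochromatic directed cycle of $T^\ast$. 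It then remains to invoke the elementary fact that $\dichi(T[\{D_v\}])\le k$ whenever palettes $C_v\subseteq[k]$ exist with $|C_v|\ge\dichi(D_v)$ and with $\{v:j\in C_v\}$ acyclic in $T$ for every $j$: dicolour each $D_v$ using only colours of $C_v$, and note that every directed cycle of $T[\{D_v\}]$ either lies inside a single $D_v$, hence is non-monochromatic, or projects onto a directed cycle of $T$, which by the palette condition cannot be monochromatic. Applying this with $k=\omega^\ast$ gives $\dichi(D)\le\omega^\ast$, and $\dichi(D)\ge\omega^\ast$ is trivial.

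I expect the main obstacle to be exactly this inductive step---distilling, from the single witness tournament $T^\ast$, a family of module-palettes that is simultaneously large enough on every module and has every colour class acyclic in the quotient. The reduction to strong components and the elementary blow-up inequality are routine, and the remaining delicate ingredient is the structural lemma that strongly connected $\vec{P_3}$-free digraphs are blow-ups of tournaments, which needs a careful (though short) case analysis of the non-adjacency relation.
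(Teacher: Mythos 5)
Theorem~\ref{thm:p3} is cited from Aboulker, Aubian, and Charbit; the present paper contains no proof of it, so there is no in-paper argument to compare against. Assessed on its own terms, your proposal is correct, and it almost certainly matches the cited proof in spirit. The essential structural input is the Bang-Jensen--Huang decomposition of quasi-transitive digraphs, which is exactly the class of $\vec{P_3}$-free digraphs: every strongly connected quasi-transitive digraph on at least two vertices is a substitution $S[\{Q_v\}]$ of quasi-transitive digraphs into a strongly connected tournament $S$ on at least two vertices. Your sketch of a re-derivation via modular decomposition is where I would be cautious. The observation that a common neighbour of a non-adjacent pair $\{x,y\}$ must dominate both or be dominated by both is correct, but $\{x,y\}$ itself need not be a module: a strongly connected $\vec{P_3}$-free digraph can contain a vertex adjacent to $x$ and not to $y$ (one obtains a nontrivial module, but it need not contain the pair $\{x,y\}$). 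So the ``short case analysis'' you defer is really the content of the Bang-Jensen--Huang theorem, and citing it directly would be cleaner and safer than re-deriving it. Granting that structure, the rest of your argument is correct and tidy: forming $T^\ast=T[\{R_v\}]$ from the witness sub-tournaments $R_v\subseteq D_v$ supplied by induction, taking palettes $C_v:=\psi(V(R_v))$ from an optimal dicolouring $\psi$ of $T^\ast$, verifying $|C_v|\ge\dichi(D_v)$ and that each colour class $\{v: j\in C_v\}$ is acyclic in $T$ (a monochromatic cycle in $T$ would lift, module by module, to a monochromatic cycle in $T^\ast$), and then re-colouring each $D_v$ from its own palette. Together with the routine reduction to strong components this proves the stronger identity $\dichi(D)=\max\{\dichi(T'): T'\ \text{an induced sub-tournament of}\ D\}$ for all $\vec{P_3}$-free $D$, and the theorem follows immediately since, in a $\{H,\vec{P_3}\}$-free digraph, every induced sub-tournament is an $H$-free tournament.
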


Here $\vec{P_{3}}$ is the directed path on three vertices. Let us pause to introduce some convenient notation for orientations of paths. We use arrows $\rightarrow$ and $\leftarrow$ to denote the direction of the arcs in a path. For example, $v_1 \rightarrow v_2 \rightarrow v_3 \leftarrow v_4 \leftarrow v_5$ and $\rightarrow\rightarrow\leftarrow\leftarrow$ both denote the digraph $(\{v_1,\dots, v_5 \}, \{v_1v_2, v_2v_3, v_4v_3, v_5v_4  \})$. The \textit{directed path $\vec{P_m}$} on $m$ vertices refers to a path on $m$ vertices with orientation $\rightarrow\rightarrow\dots\rightarrow$. 

Thus returning to oriented stars, the remaining cases for oriented stars of degree 2 are when $F \in \{\rightarrow \leftarrow, \leftarrow \rightarrow\}$. These cases are the same (up to reversing all arcs), so it suffices to consider $\leftarrow \rightarrow$. Steiner gave the following partial result (where $\vec{C_3}$ is the \textit{cyclic triangle} $\vec{C_3} = (\{v_{1},v_{2},v_{3}\}, \{v_{1}v_{2},v_{2}v_{3},v_{3}v_{1}\})$). 
\begin{thm} [Steiner \cite{steiner2022coloring}]
    If $F = \ \leftarrow\rightarrow$ and $H=\vec{C_3}\Rightarrow TT_k$ for some integer $k\geq 1$, then $\{H, F\}$ is $\dichi$-finite. 
\end{thm}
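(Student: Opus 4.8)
Unpacking the definition, the goal is to produce a constant $c=c(k)$ such that every $\{H,F\}$-free digraph $D$ satisfies $\dichi(D)\le c$, where $H=\vec{C_3}\Rightarrow TT_k$ and $F=\ \leftarrow\rightarrow$. The starting observation is that $F$-freeness is a local condition: $D$ is $\leftarrow\rightarrow$-free if and only if $N^+(v)$ induces a tournament for every vertex $v$. Since $\vec{C_3}=\Delta(1,1,1)$ and $TT_k$ are heroes in tournaments, Theorem~\ref{k_2_cooperates} implies that $H=\vec{C_3}\Rightarrow TT_k$ is a hero in tournaments, so by \cite{heroes-characterization} there is $d=d(k)$ with $\dichi(T)\le d$ for every $H$-free tournament $T$. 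Applying this to each out-neighbourhood shows that in our $D$ every $N^+(v)$ has dichromatic number at most $d$; in particular, a vertex $v$ of large out-degree is complete to a large induced transitive tournament contained in $N^+(v)$. I also record the global consequence of $H$-freeness that drives the argument: for every cyclic triangle $\{a,b,c\}$ of $D$, the set $N^+(a)\cap N^+(b)\cap N^+(c)$ induces a tournament with no induced $TT_k$ --- otherwise $\{a,b,c\}$ together with such a $TT_k$ would induce $H$ --- and hence has fewer than $2^{k-1}$ vertices.

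Next I would pass to a minimal counterexample: assuming $\dichi(D)>c$, replace $D$ by a vertex-minimal subdigraph of the same dichromatic number. A routine argument shows this subdigraph is strongly connected and has minimum out-degree and minimum in-degree at least $\dichi(D)-1>c-1$, while remaining $\{H,F\}$-free. Together with the previous paragraph this means that in the minimal $D$ every vertex is complete to an induced transitive tournament on at least $(c-1)/d$ vertices, a quantity we may make as large as we like by choosing $c$ large relative to $k$.

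The heart of the proof is to derive a contradiction from the statement ``every vertex is complete to a huge induced transitive tournament, yet no cyclic triangle has more than $2^{k-1}$ common out-neighbours''. The plan is to fix a \emph{maximum} induced transitive tournament $T=(t_1\to\cdots\to t_L)$ of $D$; then $L$ is large, since even a single vertex is complete to a large one. Consider its sink $t_L$, which is complete to a large induced transitive tournament $S$; moreover $S\cap T=\emptyset$, since $t_L$ has no out-neighbour in $T$. Maximality of $T$ and $\leftarrow\rightarrow$-freeness constrain how $S$ attaches to $T$: for $s\in S$, if $t_i\to s$ for every $i$ then $T\cup\{s\}$ would be a larger transitive tournament, whereas if $s\to t_i$ for some $i<L$ then $\{s,t_i,t_L\}$ is a cyclic triangle (because $t_i\to t_L\to s\to t_i$). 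A careful --- and iterated --- pigeonholing over these attachment behaviours, using that out-neighbourhoods are tournaments of bounded dichromatic number, should either produce an induced transitive tournament strictly larger than $T$, or exhibit a cyclic triangle complete to more than $2^{k-1}$ common out-neighbours; either alternative is a contradiction. The degenerate case in which $D$ is $\vec{C_3}$-free --- equivalently, in which every out-neighbourhood is a \emph{transitive} tournament --- must be handled separately, since there $H$-freeness holds automatically and the global constraint is vacuous; a dedicated structural analysis of strongly connected ``locally transitive'' digraphs with large minimum out- and in-degree is required to bound their dichromatic number.

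I expect this last step to be the main obstacle. The naive hope of finding $\vec{C_3}\Rightarrow TT_k$ entirely inside a single out-neighbourhood provably fails: $N^+(v)$ does contain cyclic triangles and huge transitive tournaments, but any cyclic triangle that meets a transitive tournament $U\subseteq N^+(v)$ in exactly one vertex has one of its other two vertices dominated by $U$ rather than complete to $U$, so it cannot serve as the $\vec{C_3}$ ``above'' $U$; and the configurations one builds most naturally (for instance $\Delta(1,TT_m,1)$) turn out to be $H$-free themselves. Consequently the argument must genuinely combine several out-neighbourhoods, or several nested complete-to transitive tournaments, and both the quantitative bookkeeping that keeps $c$ a finite function of $k$ and the separate locally transitive case are where the real difficulty lies.
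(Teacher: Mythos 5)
This theorem is only \emph{cited} in the paper (it is Steiner's result, \cite{steiner2022coloring}); no proof appears in the manuscript, so there is no ``paper's own proof'' to compare your attempt against. I therefore assess the proposal on its own terms.

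The preliminary observations are sound: $\leftarrow\rightarrow$-freeness is equivalent to every out-neighbourhood inducing a tournament; $H = \vec{C_3}\Rightarrow TT_k$ is a hero in tournaments by Theorem~\ref{k_2_cooperates}, so each out-neighbourhood has dichromatic number at most some $d=d(k)$ and hence (after passing to a minimal counterexample with large minimum out-/in-degree) contains a large induced transitive tournament; and for any cyclic triangle $\{a,b,c\}$, the common out-neighbourhood $N^+(a)\cap N^+(b)\cap N^+(c)$ is a tournament with no $TT_k$, hence of size below $2^{k-1}$. All of this is correct and is plausible as scaffolding.

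The genuine gap is the entire middle of the argument, and you acknowledge it yourself: the paragraph beginning ``The heart of the proof'' does not establish anything. ``A careful --- and iterated --- pigeonholing \dots\ should either produce an induced transitive tournament strictly larger than $T$, or exhibit a cyclic triangle complete to more than $2^{k-1}$ common out-neighbours'' is a wish, not a lemma; nothing in the text pins down which pigeonhole argument, why it terminates, or how the two outcomes are forced. Even the stated attachment trichotomy is incomplete: for $s\in S$ you handle (a) $t_i\to s$ for all $i$ and (b) $s\to t_i$ for some $i<L$, but the third case --- $s$ has no out-neighbour in $T$ yet is \emph{non-adjacent} to some $t_i$ --- is simply omitted, and it is not ruled out by $\leftarrow\rightarrow$-freeness (you would need a vertex sending out-arcs to both $s$ and $t_i$, which is not supplied). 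Finally, the ``degenerate'' $\vec{C_3}$-free subcase is also left open (``a dedicated structural analysis \dots\ is required''); since $\vec{C_3}$-free digraphs are automatically $H$-free, this is a full-fledged subproblem of the theorem, not a marginal corner case. As it stands, the proposal is a well-informed outline with a clearly flagged hole at exactly the place where the real work would have to happen; it is not a proof.
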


Moving past oriented stars, we consider another natural question.
Which digraphs $F$ have the property that all heroes in tournaments are heroes in $F$-free digraphs? The following two results show that this holds if $F = rK_1$, and does not hold if $F$ contains $K_1 + \vec{P_2}$.
\begin{thm}[Harutyunyan, Le, Newman, and Thomassé \cite{dense-digraphs}] 
\label{dense-digraphs-main}
    For all $r \in \mathbb{N}$ and every hero $H$ in tournaments, $\{rK_1, H\}$ is $\dichi$-finite. 
\end{thm}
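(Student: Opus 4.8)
The plan is to induct on the recursive structure of heroes given by Theorem~\ref{k_2_cooperates}, establishing the following strengthening: for every hero $H$ in tournaments and every integer $r\ge 1$ there is a constant $c=c(H,r)$ such that every $H$-free digraph $D$ with $\alpha(D)\le r$ has $\dichi(D)\le c$. (Being $rK_1$-free is exactly having $\alpha\le r-1$, so this implies the theorem.) The base case $H=K_1$ is immediate. It is also convenient to record, directly from Ramsey's theorem, that $\{rK_1,TT_k\}$ is $\dichi$-finite: any digraph on more than $R(r,2^{k-1})$ vertices contains either an independent set of size $r$ or a tournament on $2^{k-1}$ vertices, and the latter contains $TT_k$; hence $\{rK_1,TT_k\}$-free digraphs have boundedly many vertices. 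This will be used as a black box for the ``$TT_m$'' pieces appearing in the $\Delta$ construction.

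For $H=H_1\Rightarrow H_2$ (with $H_1,H_2$ heroes to which the inductive hypothesis applies, for every value of $r$), the heart of the matter is a \emph{splitting lemma}: for all $r,t$ there is $d=d(r,t)$ such that every digraph $D$ with $\alpha(D)\le r$ and $\dichi(D)\ge d$ contains disjoint $A,B\subseteq V(D)$ with $\dichi(D[A])\ge t$, $\dichi(D[B])\ge t$, and every vertex of $A$ having an arc to every vertex of $B$. Granting this, put $t=\max(c(H_1,r),c(H_2,r))+1$; if $D$ were $H$-free with $\alpha(D)\le r$ and $\dichi(D)\ge d(r,t)$, then $D[A]$ and $D[B]$ still have $\alpha\le r$, so by induction $D[A]$ contains $H_1$ and $D[B]$ contains $H_2$, and the complete domination of $B$ by $A$ yields $H_1\Rightarrow H_2$ in $D$, a contradiction; hence $c(H,r):=d(r,t)-1$ works.

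The $\Delta$ cases are symmetric under reversal of all arcs (which preserves $\alpha$, $\dichi$, and the class of heroes), so it suffices to treat $H=\Delta(1,H_1,m)$. Here one first uses a \emph{domination lemma} — that if $\alpha(D)\le r$ and $\dichi(D)$ is large then some vertex $x$ has $\dichi(D[N^+(x)])$ large, adapting the corresponding tournament fact — and then argues inside $N^+(x)$: a triangular version of the splitting lemma together with the Ramsey observation above produces, within $N^+(x)$, a high-$\dichi$ set that dominates a copy of $TT_m$ located among the in-neighbours of $x$; applying the inductive hypothesis to that high-$\dichi$ set gives a copy of $H_1$, and, since the $TT_m$ lies in $N^-(x)$, the arcs to $x$ complete a copy of $\Delta(1,H_1,m)$. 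As before, $H$-freeness then forces $\dichi(D)$ to be bounded.

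The main obstacle is the splitting lemma, and within it the one genuinely new ingredient beyond the tournament argument of Berger et al.: given a digraph of bounded independence number with large $\dichi$, one passes to a $\dichi$-critical (hence strongly connected) subdigraph and extracts many consecutive strongly connected slices of large $\dichi$, and then the delicate step is to upgrade the bipartite interface between two such slices from ``almost all arcs, almost all pointing forward'' to ``all arcs, all pointing forward'' without destroying the large $\dichi$ of either side. This is precisely where $\alpha(D)\le r$ is used — via an iterated Ramsey/cleaning argument that repeatedly passes to large sub-tournaments inside the two slices — and it is the part I expect to require the most care.
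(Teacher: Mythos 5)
This theorem is cited from Harutyunyan, Le, Newman, and Thomass\'e and is not proved in the present paper, but the paper does adapt and describe their machinery at length in Sections~\ref{sec:local}--\ref{sec:heroes}. The actual proof route is the one sketched there: it works with \emph{bag chains} (sequences of high-$\dichi$ ``bags'' $B_1,\dots,B_t$ in which, for every $v\in B_i$, the \emph{backward} neighbourhoods $N^+(v)\cap B_{i-1}$ and $N^-(v)\cap B_{i+1}$ merely have \emph{bounded} dichromatic number, not zero), together with the notions of $k$-local digraphs, domination, and ``tamed'' families. Your outline instead routes everything through a much stronger \emph{splitting lemma} asserting the existence of two high-$\dichi$ sets $A,B$ with $A\Rightarrow B$, i.e.\ with \emph{every} vertex of $A$ seeing \emph{every} vertex of $B$. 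That is not what the cited argument establishes, and it is precisely the kind of clean complete-domination structure that the bag-chain formalism is designed to avoid needing.

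The gap is in the splitting lemma itself, and more specifically in the ``cleaning'' step you flag at the end. You propose to upgrade an interface that is ``almost all arcs, almost all pointing forward'' to ``all arcs, all pointing forward'' by iteratively passing to large sub-tournaments via Ramsey inside the two slices. But Ramsey applied to a set with bounded independence number only returns a large \emph{clique} of the underlying graph, which as a digraph is an arbitrary tournament and in particular may well be transitive and hence have $\dichi=1$. So the cleaning operation that buys you completeness of the interface simultaneously destroys the property $\dichi(A),\dichi(B)\ge t$ that you are trying to preserve, and there is no mechanism in your sketch that controls this trade-off. The same tension occurs in a second form: even if each vertex of $B$ has only a constant number of ``bad'' (backward or missing) partners in $A$, the union of these bad sets over all of $B$ can cover almost all of $A$, so restricting $A$ to the common good part need not leave anything of high $\dichi$. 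This is exactly why the HLNT argument is content with backward arcs of bounded dichromatic number between consecutive bags, and then does all the subsequent work (pigeonhole inside bags, zones, etc.)\ with that weaker hypothesis. As written, your proposal defers essentially all the difficulty of the theorem to a structural statement that is stronger than what is known and for which the indicated proof method does not clearly close. The rest of the outline (the reduction via Theorem~\ref{k_2_cooperates}, the Ramsey observation for $TT_k$, the use of arc-reversal symmetry for the two $\Delta$ cases, and the domination-of-out-neighbourhood step) is sound in spirit, but it all rests on the unproved splitting lemma, and the $\Delta$ case additionally invokes an unspecified ``triangular'' variant of it.
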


\begin{thm}[Aboulker, Aubian, and Charbit \cite{complete-multipartite}]
\label{about-122}
    If $F$ contains a copy of $K_1+\vec{P_2}$, then $\Delta(1, 2, \vec{C_3}), \Delta(1, \vec{C_3}, 2), \Delta(1, 2, 3)$, and $\Delta(1, 3, 2)$ are not heroes in $F$-free digraphs.
\end{thm}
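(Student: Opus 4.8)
The plan is to reduce to the case $F = K_1+\vec{P_2}$, describe the $(K_1+\vec{P_2})$-free digraphs structurally, and then to construct, for each of the four tournaments $H$, a sequence of $H$-free digraphs from that class with dichromatic number tending to infinity. Since $F$ contains an induced copy of $K_1+\vec{P_2}$, every $(K_1+\vec{P_2})$-free digraph is $F$-free, so it suffices to prove that $\{H, K_1+\vec{P_2}\}$ is not $\dichi$-finite for each such $H$. I would first record the elementary fact that a digraph is $(K_1+\vec{P_2})$-free if and only if non-adjacency is an equivalence relation on its vertex set, i.e.\ its underlying graph is complete multipartite: if $bc$ is an arc and $a$ is non-adjacent to both $b$ and $c$ then $\{a,b,c\}$ induces $K_1+\vec{P_2}$, and applying this with $a,b$ non-adjacent and with $b,c$ non-adjacent forces $a,c$ non-adjacent, so non-adjacency is transitive; the converse is immediate. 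Thus the task reduces to constructing, for each $H$ above, $H$-free orientations of complete multipartite graphs of unbounded dichromatic number.

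The key structural point is that each of the four tournaments $H$ has six vertices and no non-edges, so any copy of $H$ inside an orientation $D$ of a complete multipartite graph uses at most one vertex per part; extending such a copy to a \emph{transversal} — a set meeting each part in exactly one vertex — shows that $D$ is $H$-free if and only if every transversal of $D$ induces an $H$-free sub-tournament. Since each $H$ is a hero in tournaments by Theorem~\ref{k_2_cooperates}, those sub-tournaments have dichromatic number at most some constant $c_H$; but — and this is what makes the statement plausible — a uniform bound on all transversals does not bound $\dichi(D)$. Already the blow-up of the directed $4$-cycle (two equal parts with arcs oriented cyclically) has every transversal equal to a single arc, yet has dichromatic number $2$.

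The plan is then to amplify this local-to-global gap into a sequence $D_1, D_2, \dots$ of orientations of complete multipartite graphs with $\dichi(D_t) \ge t$ in which every transversal remains $H$-free. I would attempt this recursively: starting from a small such digraph with a directed cycle, pass from $D_{t-1}$ to $D_t$ by replacing every vertex with a bounded gadget and re-routing the arcs among blown-up vertices so as to $(i)$ destroy every potential copy of $H$ on a transversal, and $(ii)$ keep a monochromatic directed cycle in every $(t-1)$-dicolouring, which forces $\dichi(D_t)\ge t$. Step $(i)$ is where the rigidity of $H$ helps: a copy of $H$ is a dominating vertex $v$ together with sets $N^+(v), N^-(v)$ inducing one of $TT_2, TT_3, \vec{C_3}$ with all arcs directed from $N^+(v)$ to $N^-(v)$, and this very special pattern is what the re-routing must obstruct. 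As a consistency check, any such family necessarily uses unboundedly many parts, since colouring an orientation of a complete multipartite graph by its parts is a dicolouring, so $\dichi(D)$ never exceeds the number of parts of $D$.

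The main obstacle is reconciling $(i)$ and $(ii)$: a crude re-routing that kills all copies of $H$ on transversals tends to destroy the long monochromatic directed cycles that keep $\dichi$ high, while a re-routing that preserves those cycles tends to recreate a copy of $H$ on some transversal. Making the recursion go through requires a structural invariant on the $D_t$ strong enough to certify both ``no transversal induces $H$'' and ``every $(t-1)$-dicolouring of $D_t$ leaves a monochromatic directed cycle'' simultaneously — for instance, controlling not just which tournaments occur as transversals but the pattern in which the parts are oriented towards one another — and checking that this invariant is preserved by the blow-up step is the technical core of the argument. I expect all four cases to come from a single construction, with the gadget in step $(i)$ chosen according to $H$.
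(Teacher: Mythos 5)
This is a cited result (Theorem~\ref{about-122} is quoted from \cite{complete-multipartite}, not proved in this paper), so there is no proof here to compare against line by line; the authors only note that the proof in \cite{complete-multipartite} is built on shift graphs, just as their own constructions in Section~\ref{sec:stars} are.

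Your preliminary reductions are sound: since $F$ contains $K_1+\vec{P_2}$, it suffices to treat $F = K_1+\vec{P_2}$; $(K_1+\vec{P_2})$-free digraphs are precisely orientations of complete multipartite graphs; each of the four candidate $H$'s is a $6$-vertex tournament, so a copy of $H$ uses at most one vertex per part, and $D$ is $H$-free iff every transversal induces an $H$-free tournament. The consistency check (the number of parts bounds $\dichi$) is also correct. But after these reductions the proposal stops short of the actual content of the theorem. You write ``I would attempt this recursively: starting from a small such digraph\dots pass from $D_{t-1}$ to $D_t$ by replacing every vertex with a bounded gadget and re-routing the arcs\dots'' and then acknowledge that ``the main obstacle is reconciling $(i)$ and $(ii)$'' and that designing and verifying a suitable structural invariant ``is the technical core of the argument.'' That technical core is precisely what the theorem requires, and it is left entirely unresolved. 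As it stands, nothing in the proposal rules out that the recursion is impossible for one of the four $H$'s — indeed, the whole difficulty of the theorem is that naive blow-ups of digraphs with large $\dichi$ (e.g.\ blow-ups of large transitive tournaments, or of shift tournaments) tend to reintroduce $\Delta(1,2,3)$-type patterns on transversals, so the construction needs to be engineered quite carefully. The known proof in \cite{complete-multipartite} does this by an explicit modification of shift tournaments (the same family of ideas used in Section~\ref{sec:stars} here), not by a recursive gadget replacement, and the explicitness is what allows one to check $H$-freeness. Without either that construction or a concrete replacement for it, your argument is a plan rather than a proof: the reduction is correct, but the existential claim at its heart — that the required family of $H$-free orientations of complete multipartite graphs with unbounded dichromatic number exists — is asserted, not established.
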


 Complementing Theorem \ref{about-122}, Aboulker, Aubian, and Charbit \cite{complete-multipartite} almost completely characterize heroes in $\{K_1+\vec{P_2} \}$-free digraphs:
\begin{thm}[Aboulker, Aubian, and Charbit \cite{complete-multipartite}]
\label{complete-multipartite-main}
    The set $\{H, K_1+\vec{P_2} \}$ is $\dichi$-finite if: 
    \begin{itemize}
        \item $H=K_1$;

        \item $H=H_1\Rightarrow H_2$ where $\{H_i, K_1+\vec{P_2} \}$ is $\dichi$-finite for $i \in \{1, 2\}$; or

        \item $H=\Delta(1, 1, H_1)$ where $\{H_1, K_1+\vec{P_2} \}$ is $\dichi$-finite.
    \end{itemize}
\end{thm}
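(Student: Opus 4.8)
The first step is to identify the class being coloured: a digraph $D$ is $(K_1+\vec{P_2})$-free exactly when no vertex is nonadjacent to both ends of some arc, i.e. the non-neighbourhood of every vertex is a stable set, i.e. the ``nonadjacency graph'' of $D$ is a disjoint union of cliques; so $(K_1+\vec{P_2})$-free digraphs are precisely the orientations of complete multipartite graphs. Write $B_1,\dots,B_m$ for the parts of such a $D$. Each $B_i$ induces an edgeless, hence acyclic, subdigraph, so colouring each vertex by the part containing it is a dicolouring and $\dichi(D)\le m$; thus the whole difficulty is to show that forbidding $H$ bounds $\dichi(D)$ (morally, bounds the number of ``active'' parts). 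I would prove this by induction on $|V(H)|$, the base case $H=K_1$ being trivial since an $H$-free digraph is then empty.

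For the inductive step I would fix a $(K_1+\vec{P_2})$-free, $H$-free digraph $D$ and, when convenient inside the sub-arguments below, replace it by an induced subdigraph of the same dichromatic number that is vertex-critical, so that every vertex has in- and out-degree at least $\dichi(D)-1$ (this preserves being an orientation of a complete multipartite graph). The centrepiece is a \emph{domination lemma} with no reference to $H$: for all $a,b\ge 0$ there is $g(a,b)$ such that every orientation $D$ of a complete multipartite graph with $\dichi(D)>g(a,b)$ contains disjoint induced subdigraphs $R$ and $S$ with every arc between them directed from $R$ to $S$, and with $\dichi(R)>a$ and $\dichi(S)>b$. Granting this, the case $H=H_1\Rightarrow H_2$ is immediate: by hypothesis together with the inductive bounds there are $c_1,c_2$ so that every $(K_1+\vec{P_2})$-free, $H_i$-free digraph has $\dichi\le c_i$, and so if $\dichi(D)>g(c_1,c_2)$ then $R$ contains an induced $H_1$, $S$ contains an induced $H_2$, and hence $D$ contains an induced $H_1\Rightarrow H_2=H$ — a contradiction — giving $\dichi(D)\le g(c_1,c_2)$.

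For $H=\Delta(1,1,H_1)$ I would run the parallel argument built around cyclic triangles. With $c_1$ as above, note that if some arc $ab$ of $D$ has $\dichi(W_{ab})>c_1$ where $W_{ab}:=N^+(b)\cap N^-(a)=\{w: b\to w\to a\}$, then $W_{ab}$ is disjoint from $\{a,b\}$ and induces a $(K_1+\vec{P_2})$-free digraph, which by induction contains an induced $H_1$; taking $\{a,b\}$ together with such a copy of $H_1$ yields an induced $\Delta(1,1,H_1)$. Hence $H$-freeness forces $\dichi(W_{ab})\le c_1$ for every arc $ab$, and it suffices to show that if $\dichi(D)$ is large enough then some arc $ab$ has $\dichi(W_{ab})$ large. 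A natural warm-up is the sub-case $H_1=K_1$ — that $\vec{C_3}$-free orientations of complete multipartite graphs have bounded $\dichi$ — which one can check directly (the directed cycles are heavily constrained once transitive-triangle ``shortcuts'' are forbidden); this $\vec{C_3}$ building block recurs throughout.

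I expect the domination lemma (and its $\Delta$-analogue) to be the main obstacle. The naive approach — repeatedly pass to an out-neighbourhood of maximum $\dichi$, collecting the chosen vertices into $R$ — produces an $R$ that is merely a \emph{transitive} tournament, which suffices only when $H_1$ is itself a transitive tournament; already for $H_1=\vec{C_3}$ one genuinely needs $R$ to have large dichromatic number. Obtaining such an $R$ seems to require arguing with whole parts $B_i$ and the ``full-domination'' relation between parts (the bipartite tournament between $B_i$ and $B_j$ being one-directional), together with Ramsey-type and amortised bookkeeping that keeps the side being built up from collapsing to dichromatic number $1$; controlling at once the part structure, the orientations of the bipartite tournaments between parts, and $\dichi$ is where the real work sits. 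I would also expect to need, as an ingredient, a local-to-global statement — an orientation of a complete multipartite graph all of whose out-neighbourhoods have bounded $\dichi$ has bounded $\dichi$ — which in particular settles the $H_1=K_1$ sub-case of the $\Rightarrow$ step directly.
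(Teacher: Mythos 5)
This theorem is cited from Aboulker--Aubian--Charbit rather than proved in the paper, but the machinery the reference uses (and which Sections~3--4 of this paper generalize) is the bag-chain / local-to-global framework of Harutyunyan--Le--Newman--Thomass\'e: one handles $H_1\Rightarrow H_2$ by the lifting result recorded here as Theorem~\ref{cmp-mult}, and one handles $\Delta(1,1,H_1)$ by showing the digraph is (co)local and running a $(c,\beta)$-bag-chain decomposition. Your proposal goes a different way, and it has genuine gaps.

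Your opening observation is correct and is the standard starting point: $(K_1+\vec{P_2})$-free digraphs are precisely orientations of complete multipartite graphs. But the rest is a sketch whose two load-bearing claims are neither proved nor, as stated, quite right.

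First, the ``domination lemma'' for the $H_1\Rightarrow H_2$ case. As you state it --- disjoint $R,S$ with every arc between them directed from $R$ to $S$, and $\dichi(R)>a$, $\dichi(S)>b$ --- this does not deliver an induced $H_1\Rightarrow H_2$: a vertex of $R$ and a vertex of $S$ may lie in the same part of the multipartite graph and hence be nonadjacent, so an induced copy of $H_1$ in $R$ together with an induced copy of $H_2$ in $S$ need not be out-complete. You would need $R$ and $S$ to live on disjoint sets of parts (equivalently, $R$ out-complete to $S$). Even in that corrected form, the lemma is essentially equivalent to the statement you are trying to prove (for tournaments it is a reformulation of ``heroes are closed under $\Rightarrow$''), and you flag yourself that you cannot prove it; that is the whole content of the $\Rightarrow$ case. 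The reference avoids this entirely via Theorem~\ref{cmp-mult}, which lifts the $\Rightarrow$ case from $F$-free to $(K_1+F)$-free without any such structural dichotomy.

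Second, the $\Delta(1,1,H_1)$ case. Your reduction to ``some arc $ab$ has $\dichi(W_{ab})$ large'' is the right target (and the step from there to an induced $\Delta(1,1,H_1)$ is fine), but that implication is exactly the content of the local/bag-chain analysis, and you do not prove it. The ``local-to-global'' ingredient you invoke --- that $k$-local orientations of complete multipartite graphs have bounded $\dichi$ --- is itself a consequence of Theorem~\ref{dense-digraphs-main} (heroes in $rK_1$-free digraphs), which is precisely the hard theorem whose proof technique (tamed families, dominating sets, bag chains) the actual argument is built on; invoking it as a black box here risks circularity and, in any case, leaves the core of the proof unwritten. So the proposal identifies the correct class and the correct target substructures but does not supply a proof, and the route it gestures at is materially different from the one in the literature and in Sections~3--4 of this paper.
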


With this theorem, only the status of $\Delta(1, 2, 2)$ remains to be decided. 
This raises the natural question: For which forests $F$ is it the case that $\{F, H\}$ is $\dichi$-finite for all $H$ as in Theorem \ref{complete-multipartite-main}? In Section \ref{sec:heroes}, we show: 
\begin{restatable}{thm}{maintwo}  \label{main2}  Let $r \in \mathbb{N}$.  The set $\{H, rK_1+\vec{P_3} \}$ is $\dichi$-finite if: 
    \begin{itemize}
        \item $H=K_1$;

        \item $H=H_1\Rightarrow H_2$ where $\{H_i, rK_1+\vec{P_3} \}$ is $\dichi$-finite for $i \in \{1, 2\}$; or

        \item $H=\Delta(1, 1, H_1)$ where $\{H_1, rK_1+\vec{P_3} \}$ is $\dichi$-finite.
    \end{itemize}
\end{restatable}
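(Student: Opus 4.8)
The plan is to prove, by induction on the pair $(r,|V(H)|)$ ordered lexicographically, the following reformulation: for every $r\ge 0$ and every \emph{admissible} digraph $H$ --- meaning $H$ is obtained from $K_1$ by repeatedly applying $H'\Rightarrow H''$ and $\Delta(1,1,\cdot)$ --- the family $\{H,rK_1+\vec{P_3}\}$ is $\dichi$-finite. Two bookkeeping facts are used throughout. First, every admissible $H$ is a hero in tournaments: this holds for $K_1$, is preserved by $\Rightarrow$ by Theorem~\ref{k_2_cooperates}, and is preserved by $\Delta(1,1,\cdot)$ because $\Delta(1,1,H_1)=\Delta(1,H_1,1)$, which is of the form $\Delta(1,H_1,m)$ with $m=1$. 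Hence Theorems~\ref{thm:p3} and~\ref{dense-digraphs-main} are available for every admissible $H$. Second, the admissible class is closed under reversal of all arcs --- reversal sends $H_1\Rightarrow H_2$ to $\overline{H_2}\Rightarrow\overline{H_1}$ and $\Delta(1,1,H_1)$ to $\Delta(1,1,\overline{H_1})$ --- and $rK_1+\vec{P_3}$ is self-reverse, so a bound on out-neighbourhoods yields the same bound on in-neighbourhoods. The base case $H=K_1$ is immediate, as an $H$-free digraph is empty.

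The first part of the inductive step is a reduction to neighbourhoods using the forbidden $\vec{P_3}$. Let $D$ be $\{H,rK_1+\vec{P_3}\}$-free. If $D$ has no induced $\vec{P_3}$, then $D$ is $\{H,\vec{P_3}\}$-free and $\dichi(D)$ is bounded by Theorem~\ref{thm:p3} (recall $H$ is a hero). Otherwise fix an induced $\vec{P_3}$ on $\{a,b,c\}$ and put $W=V(D)\setminus N[\{a,b,c\}]$. If $D[W]$ contained an induced $(r-1)K_1+\vec{P_3}$ on a set $W'$, then $W'\cup\{b\}$ would induce $rK_1+\vec{P_3}$ in $D$, since $b$ has no neighbour in $W$; so $D[W]$ is $\{H,(r-1)K_1+\vec{P_3}\}$-free, and $\dichi(D[W])$ is bounded by the inductive hypothesis (for $r=0$ one is always in the first case, so here $r\ge 1$). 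Since $V(D)\setminus W\subseteq N[a]\cup N[b]\cup N[c]$ and $N[v]=\{v\}\cup N^+(v)\cup N^-(v)$, it suffices to bound $\dichi(D[N^+(v)])$ for an arbitrary vertex $v$ of a $\{H,rK_1+\vec{P_3}\}$-free digraph $D$.

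For $H=H_1\Rightarrow H_2$, I would run a composition argument in the style of the tournament hero characterisation directly on the $(H_1\Rightarrow H_2)$-free digraph $D$ (or on $D[N^+(v)]$). If its dichromatic number is large, one extracts disjoint sets $A$ and $B$, each of large dichromatic number, with $A$ \emph{complete} to $B$, i.e.\ all arcs between $A$ and $B$ present and directed from $A$ to $B$. By the inductive hypothesis for the strictly smaller admissible digraphs $H_1$ and $H_2$, $D[A]$ contains an induced $H_1$ and $D[B]$ an induced $H_2$; since $A$ is complete to $B$, these copies together induce $H_1\Rightarrow H_2$, a contradiction. In a tournament the extraction is the iterated-domination argument of Berger et al.; here the non-arcs must also be handled, which is exactly where $(rK_1+\vec{P_3})$-freeness is used, in combination with the $\vec{P_3}$-reduction above.

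For $H=\Delta(1,1,H_1)$ the relevant structural fact is immediate from the definition of $\Delta$: if $D$ is $\Delta(1,1,H_1)$-free, then for every arc $x\to y$ the set $N^+(y)\cap N^-(x)$ induces an $H_1$-free subdigraph, so $\dichi(D[N^+(y)\cap N^-(x)])$ is bounded by the inductive hypothesis for $H_1$. A vertex of $N^+(v)$ lies in some such set precisely when it lies on a directed triangle of $D$; the vertices of $N^+(v)$ on no directed triangle induce a $\vec{C_3}$-free, hence $\{\vec{C_3},rK_1+\vec{P_3}\}$-free, subdigraph, whose dichromatic number is bounded by the inductive hypothesis for the strictly smaller admissible digraph $\vec{C_3}$ when $H_1\ne K_1$, and by a separate direct treatment of $\{\vec{C_3},rK_1+\vec{P_3}\}$-free digraphs when $H=\vec{C_3}$. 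It then remains to cover the remaining vertices of $N^+(v)$, up to a $\vec{P_3}$-free error, by boundedly many of the sets $N^+(y)\cap N^-(x)$; I would do this by showing that an unbounded, ``spread-out'' family of directed triangles through $N^+(v)$ would contain an induced $rK_1+\vec{P_3}$, arguing through the $\vec{P_3}$-reduction together with a Ramsey-type count on the triangles. I expect this last covering step --- converting the local fact about the sets $N^+(y)\cap N^-(x)$ into a global bound on $\dichi(D[N^+(v)])$, and the closely related direct treatment of the $\vec{C_3}$ case --- to be the main obstacle; the $\Rightarrow$ case, by contrast, should be a fairly routine adaptation of the arguments of Berger et al.\ and of Aboulker, Aubian, and Charbit.
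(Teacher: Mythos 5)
Your proposal takes a genuinely different route from the paper's, and at the two points you yourself flag as obstacles the gap is real and is precisely where all the work lies.

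The paper does not attempt a direct covering of $N^+(v)$. Instead it proves a general lifting theorem (Theorem~\ref{main1}): if $F$ is ``localized'', ``colocalized'' and ``cooperates'', then $rK_1+F$ cooperates. The proof of Theorem~\ref{main1} follows the bag-chain strategy of Harutyunyan, Le, Newman, and Thomass\'e: either there is no long $(c,\beta)$-bag-chain and the digraph splits into a $\beta$-local part and a $\beta$-colocal part (Claims~\ref{balls1}--\ref{balls2}), or there is a maximal bag-chain $B_1,\dots,B_t$ whose union has bounded dichromatic number via a partition lemma (Lemma~\ref{partition-dichi}, Claim~\ref{major1}) and whose complement is split into zones each behaving like a short bag-chain (Claims~\ref{zones1}--\ref{zones6}). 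The $k$-local case is then handled by a separate ``domination'' argument (Lemmas~\ref{tame2}, \ref{k_2_1}, \ref{k_2_2}), which shows that in a high-dichromatic-number $k$-local $\{\Delta(1,1,H),rK_1+\vec{P_3}\}$-free digraph, any maximal acyclic $\vec{P_3}$-free set has a small dominating set, leading to a Ramsey-style set of bounded size with dichromatic number $m+1$. Your ``$\vec{P_3}$-reduction'' (peel off $W$ by induction on $r$, reduce to $N^+(v)$ and $N^-(v)$) is in the same spirit as the paper's reduction to $k$-local digraphs and the reversal argument of Lemma~\ref{fix}, and your observation that for any arc $x\to y$ in a $\Delta(1,1,H_1)$-free digraph the set $N^+(y)\cap N^-(x)$ is $H_1$-free is exactly the structural fact the paper uses repeatedly (e.g.\ in Claim~\ref{balls1}). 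So the ingredients you identify are right.

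The gap: you need that the triangle vertices of $N^+(v)$ can be covered by boundedly many sets of the form $N^+(y)\cap N^-(x)$, ``up to a $\vec{P_3}$-free error''. No such bounded covering is established, and it is not clear one exists in the form you propose: the sets $N^+(y)\cap N^-(x)$ for the unboundedly many arcs $xy$ of $D$ need not have any a priori relationship to $N^+(v)$, and a Ramsey count on triangles does not obviously produce an induced $rK_1+\vec{P_3}$ since the apex $v$ is adjacent to everything in $N^+(v)$. The bag-chain machinery is exactly the tool the literature uses to convert the local $H_1$-freeness of $N^+(y)\cap N^-(x)$ into a global bound; deriving that bound directly, as you propose, is an open problem that the paper sidesteps rather than solves. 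Similarly, for $H=H_1\Rightarrow H_2$ the extraction of $A\Rightarrow B$ is tournament-specific; the paper does not reprove it but instead cites Theorem~\ref{cmp-mult} of Aboulker, Aubian, and Charbit, which is the correct (and already nontrivial) lift of the $\Rightarrow$ operation from $F$-free to $(K_1+F)$-free digraphs. You should invoke that theorem rather than re-derive the extraction, and you should expect the $\Delta$ case to require the bag-chain/domination apparatus (or a genuinely new idea) rather than a covering argument.
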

Again, using Theorem \ref{about-122}, this leaves open only the status of $\Delta(1, 2, 2)$.

To motivate our final result, we recall the directed Gy\'arf\'as-Sumner conjecture, posed by Aboulker, Charbit and Naserasr \cite{gyarfas-sumner-digraphs}: 
\begin{conjecture}[Aboulker, Charbit, and Naserasr \cite{gyarfas-sumner-digraphs}]
\label{open-conjecture}
    If $F$ is a directed forest and $H$ is a transitive tournament, then $\{H, F\}$ is $\dichi$-finite. 
\end{conjecture}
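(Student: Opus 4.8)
The plan is to follow the template of Gy\'arf\'as's proof of the undirected Gy\'arf\'as--Sumner conjecture for paths, adapted to digraphs, with one crucial structural input supplied by Ramsey's theorem. Fix $k$ and let $F$ be an oriented forest with underlying forest $T$. Suppose $D$ is $\{TT_k, F\}$-free; I want to bound $\dichi(D)$. The key first observation is that a clique in the underlying graph $\mathrm{und}(D)$ is exactly a sub-tournament of $D$, so since every tournament on $2^{k-1}$ vertices contains $TT_k$, being $TT_k$-free forces $\omega(\mathrm{und}(D)) \le 2^{k-1}-1$: the underlying graph has bounded clique number. It may also help to first reduce to the case where $F$ is connected (an oriented tree), though --- as the special case $F = rK_1+\vec{P_3}$ handled in Theorem~\ref{main2} already shows --- a disjoint-union reduction in the directed setting is itself not routine and would have to be argued with the same levelling tools.

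The heart of the proof, and the main obstacle, is to convert large $\dichi(D)$ into an induced copy of $F$. I would fix a vertex $v$, partition $V(D)$ into levels by directed distance from $v$ (and, symmetrically, to $v$), show that a bounded band of consecutive levels already has dichromatic number close to $\dichi(D)$, recurse inside that band, and finally thread the branches of $F$ through the directed breadth-first-search tree, choosing the orientation of each thread so that the induced subdigraph obtained is exactly $F$ rather than some other orientation of $T$. Two difficulties appear that are absent in the undirected path case and that make the conjecture open even for many simple oriented trees: first, directed breadth-first layering is not clean --- an arc may jump from a level $L_i$ back to a much earlier level $L_j$, so the levels cannot be cyclically $2$-coloured and one cannot simply pass to a single level and recurse; second, one must produce the \emph{specific} orientation $F$, which seems to require layering a Ramsey-type linear-pre-order argument on top of the levelling, or exploiting the recursive $\Rightarrow$- and $\Delta$-structure that governs $F$-free digraphs (Theorem~\ref{k_2_cooperates}) in the sparse regime.

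A reasonable way to make partial progress first is to isolate the oriented trees for which the levelling can be pushed through: oriented paths with only a bounded number of direction reversals, and oriented caterpillars or brooms, where a copy of $F$ can be built greedily down one directed path of the breadth-first-search tree with the high-degree vertices supplying the leaves. This would extend Theorem~\ref{stars-dichi-bounded} (stars) and the transitive-tournament case of Theorem~\ref{thm:p3} ($\vec{P_3}$), and, with a disjoint-union reduction as in Theorem~\ref{main2}, would give the conjecture for oriented forests all of whose components are such ``tame'' trees. The general case then reduces to two stubborn points: handling unboundedly many direction reversals along a branch (the back-arc problem above), and handling oriented trees whose underlying tree is not well understood even for the undirected Gy\'arf\'as--Sumner conjecture; I expect a genuinely new idea to be needed for the former.
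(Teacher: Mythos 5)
The statement you have been asked to prove is Conjecture \ref{open-conjecture}, the directed analogue of the Gy\'arf\'as--Sumner conjecture due to Aboulker, Charbit, and Naserasr. The paper does not prove it --- it explicitly states that the conjecture is wide open, and that it is not even known for oriented paths --- so there is no proof in the paper to measure yours against, and your write-up, to its credit, does not actually claim one. After the correct (and standard) opening observation that $TT_k$-freeness bounds the clique number of the underlying graph via the fact that every tournament on $2^{k-1}$ vertices contains $TT_k$, everything that follows is a plan rather than an argument: you describe a levelling template and then candidly identify the two places it breaks down (arcs jumping backwards across directed BFS levels, and forcing the \emph{specific} orientation $F$ rather than some orientation of its underlying tree), ending with the admission that a genuinely new idea is needed. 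Those are indeed the obstructions that keep the conjecture open, but naming them is not the same as overcoming them; the gap here is not a technicality but the entire content of the problem.

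Concretely, the step ``show that a bounded band of consecutive levels already has dichromatic number close to $\dichi(D)$, recurse inside that band, and thread the branches of $F$ through the BFS tree'' is never reduced to a checkable claim. In the directed setting the levels of a search from $v$ do not partition the arcs into within-level and consecutive-level arcs, so the undirected argument that some level inherits large chromatic number has no analogue, and you offer no substitute. If you want to turn this into something provable, the realistic targets are the special cases the paper and its references actually establish: oriented stars (Theorem \ref{stars-dichi-bounded}), $\vec{P_4}$ (Cook et al.), $rK_1+\vec{P_3}$ (Theorem \ref{main2}), and pairs of valid opposing brooms (Theorem \ref{dichi-main1}). Note that the broom result is proved not by BFS levelling but via path-minimizing closed tournaments and $k$-nice sets, which would be a better template for the ``caterpillars and brooms'' partial-progress route you sketch than the Gy\'arf\'as path argument.
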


As noted in \cite{gyarfas-sumner-digraphs}, this is a directed analog of the famous Gy\'arf\'as-Sumner conjecture.

\begin{conjecture}[Gy\'arf\'as \cite{gyarfas-sumner-1} and Sumner\cite{gyarfas-sumner-2}]
\label{gyarfas-sumner-conjecture}
    For every forest $F$ and every clique $K_k$ on $k$ vertices, the $\{F, K_k \}$-free graphs have bounded chromatic number.
\end{conjecture}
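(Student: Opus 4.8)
This is the Gy\'arf\'as--Sumner conjecture, so what follows is the standard line of attack rather than a complete argument. First reduce the forest case to a tree: given a forest $F$ with components $S_1,\dots,S_m$, each on at most $p$ vertices, build a tree $T$ by taking a long path $P$, choosing $m$ vertices of $P$ that are pairwise at distance more than $2p$, and attaching $S_i$ to the $i$-th chosen vertex by a single edge. Since each $S_i$ has diameter below $p$, the sub-copies of $S_1,\dots,S_m$ inside $T$ are pairwise anticomplete, so every induced copy of $T$ contains an induced copy of $F$; hence an $F$-free graph is $T$-free, and it suffices to bound $\chi$ for $\{T,K_k\}$-free graphs. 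Root $T$ at a vertex $r$, let $T_1,\dots,T_d$ be the subtrees rooted at the children of $r$, and induct on $|V(T)|$, with stars ($T=K_{1,d}$) as the base case, since forbidding $T$ forces bounded maximum degree and hence bounded $\chi$.

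For the inductive step, let $G$ be $\{T,K_k\}$-free. Pick $v_0\in V(G)$ and layer $V(G)$ by distance from $v_0$ into $L_0,L_1,L_2,\dots$. Edges run only inside a layer or between consecutive layers, so the even-indexed layers are pairwise anticomplete, as are the odd-indexed ones, giving $\chi(G)\le 2\max_i\chi(G[L_i])$; fix a layer $L_i$ with $\chi(G[L_i])\ge\tfrac12\chi(G)$. One now seeks a vertex $u\in L_{i-1}$ and a subset $A\subseteq N(u)\cap L_i$ of large chromatic number on which to recurse: $G[A]$ is again $\{T,K_k\}$-free, so by induction it contains induced copies of $T_1,\dots,T_d$, and the aim is to place these copies so that they are pairwise anticomplete, each joined to $u$ exactly at its root, and the whole thing, together with $u$, is an induced copy of $T$.

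Two steps carry all the difficulty, and this is exactly where the conjecture is stuck. First, a layer of large $\chi$ need not contain any vertex of the previous layer dominating a large-$\chi$ subset, so extracting the pair $(u,A)$ requires iterating the layering (or a Gy\'arf\'as-path / template argument), and doing so uniformly for an arbitrary rooted tree is not known. Second --- and this I expect to be the real obstacle --- one must assemble $T_1,\dots,T_d$ inside $G[A]$ so that they are simultaneously induced, pairwise anticomplete, and attached to $u$ with precisely the right adjacency (not too many edges to $u$, not too few), and a naive pigeonhole loses control once the recursion is more than two levels deep. The cases where this can be forced are precisely the known ones: stars (bounded degree), paths (run DFS in place of BFS), radius-two trees (Kierstead--Penrice), and various spider/``multibroom'' families (later refinements). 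For trees of radius at least three with genuine branching, no uniform way to decouple the branches and control their attachment is available, and supplying one --- not the reduction to trees, not the layering set-up --- is the main obstacle.
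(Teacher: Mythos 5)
The statement you were given is Conjecture~\ref{gyarfas-sumner-conjecture}, the Gy\'arf\'as--Sumner conjecture itself: the paper states it purely as motivation for its directed analogue (Conjecture~\ref{open-conjecture}) and offers no proof, because none exists --- it is a famous open problem, settled only for special families of trees. You correctly recognized this and did not claim a proof, which is the right call. Your sketch is an accurate account of the standard attack: the reduction from forests to trees is sound (your construction of $T$ does guarantee that every induced copy of $T$ contains an induced copy of $F$, so $F$-free implies $T$-free), the BFS-layering reduction to a single layer is standard, and your identification of the two sticking points --- extracting a vertex of the previous layer dominating a large-chromatic-number set, and assembling the subtrees $T_1,\dots,T_d$ so that they are simultaneously induced, pairwise anticomplete, and correctly attached --- matches where the known proofs (stars via Ramsey, paths via Gy\'arf\'as paths, radius-two trees via Kierstead--Penrice, and assorted spider and broom families) stop generalizing. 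Since there is nothing in the paper to compare against, the only evaluation to make is that your proposal is an honest and essentially correct description of the state of the art rather than a proof, and it should not be presented as more than that.
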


Conjecture \ref{open-conjecture} is wide open. We do not even know if the conjecture holds when $P$ is an oriented path. Recently, Cook et al. \cite{p4-dichi-bounded} showed:
\begin{thm}[Cook, Masařík, Pilipczuk, Reinald, and Souza \cite{p4-dichi-bounded}]
    For every $k$, the set $\{TT_k, \vec{P_4}\}$ is $\dichi$-finite. 
\end{thm}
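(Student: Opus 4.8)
\medskip
\noindent\textbf{Proof proposal.} The plan is to use the modular decomposition of $D$, with the forbidden $TT_k$ controlling the ``depth'' of that decomposition, thereby reducing the statement to the case of \emph{prime} $\vec{P_4}$-free digraphs. The starting point is how $\dichi$ behaves under substitution: if a digraph $D$ is obtained from a digraph $Q$ by substituting a digraph $D_q$ for each vertex $q$ of $Q$ (so there is an arc from $D_q$ to $D_{q'}$ exactly when $q\to q'$ in $Q$), then any directed cycle of $D$ either lies inside a single $D_q$ or projects to a directed cycle of $Q$; colouring $D$ by the product of an optimal dicolouring of $Q$ with optimal dicolourings of the $D_q$ gives $\dichi(D)\le\dichi(Q)\cdot\max_q\dichi(D_q)$, and this improves to $\dichi(D)\le\max_q\dichi(D_q)$ when $Q$ is acyclic. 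Now take the modular decomposition tree of $D$: its leaves are the vertices, and at each internal node $N$ the digraph $D[N]$ is a substitution into a quotient $Q_N$ that is a transitive tournament (an ``order'' node), an edgeless digraph (a ``parallel'' node), or a prime digraph; each $Q_N$ is isomorphic to an induced subdigraph of $D$ and hence is again $\{TT_k,\vec{P_4}\}$-free. Call $N$ \emph{active} if $Q_N$ contains a directed cycle. Order and parallel quotients are acyclic, so active nodes are prime; and since a simple digraph has no $2$-cycle, the directed cycle of $Q_N$ has length at least $3$, so $Q_N$ has at least three vertices and thus no isolated vertex. Iterating the two inequalities down the tree gives
\[
\dichi(D)\ \le\ \max_{P}\ \prod_{\substack{N\in P\\ N\ \mathrm{active}}}\dichi(Q_N),
\]
where $P$ ranges over root-to-leaf paths (non-active nodes contribute a factor $1$).

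The heart of the argument is the claim that every such path $P$ contains at most $k-1$ active nodes. Let $N_1,\dots,N_s$ be the active nodes on $P$, listed from the root downwards, so $N_{i+1}$ lies inside the part of $Q_{N_i}$ corresponding to some vertex $q_i$; as $Q_{N_i}$ is prime on at least three vertices, $q_i$ has a neighbour $q_i'$ in $Q_{N_i}$, and we pick a vertex $u_i\in N_i$ inside the part corresponding to $q_i'$. The $u_i$ are pairwise distinct, since the part of $q_i'$ is disjoint from the part of $q_i$, which contains $N_{i+1}$ and hence $u_j$ for every $j>i$. For $i<j$, the vertex $u_j$ and the entire part of $q_i$ lie in $N_i$, so $D[N_i]$ contains an arc between $u_i$ and $u_j$, oriented from $u_j$ to $u_i$ if $q_i\to q_i'$ in $Q_{N_i}$ and from $u_i$ to $u_j$ if $q_i'\to q_i$. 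Thus the orientation depends only on $i$: each $u_i$ either points to all later $u_j$ or is pointed to by all later $u_j$. A short case check over triples then shows that $\{u_1,\dots,u_s\}$ induces a transitive tournament in $D$ — order first the $u_i$ that point to all later ones, by increasing index, then the remaining $u_i$, by decreasing index. Since $D$ is $TT_k$-free, $s\le k-1$, and therefore $\dichi(D)\le B^{\,k-1}$, where $B=B(k)$ is any upper bound for $\dichi(Q)$ over prime $\{TT_k,\vec{P_4}\}$-free digraphs $Q$.

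What remains — and what I expect to be the genuine obstacle — is to prove that such a bound $B(k)$ exists, i.e.\ that \emph{prime} $\vec{P_4}$-free digraphs have bounded dichromatic number; one would hope, and it would suffice, to show that this bound is $2$. Primality should be the key point: since $\vec{P_4}$ is not a hero in tournaments there are $\vec{P_4}$-free digraphs of arbitrarily large dichromatic number, but the prime $\vec{P_4}$-free \emph{tournaments} have at most three vertices, so all such complexity must be produced by the substitution structure that primality forbids. I would first settle the base case $k=3$ by analysing prime $\{TT_3,\vec{P_4}\}$-free digraphs directly, exploiting that forbidding an induced $\vec{P_4}$ forces every induced directed cycle to have length at most $4$ and tightly restricts how the in- and out-neighbourhoods of a vertex interact; I would then aim to bound the number of vertices of a prime $\{TT_k,\vec{P_4}\}$-free digraph in terms of $k$ (again using $TT_k$-freeness to bound the ``length'' of its transitive structure), or else to exhibit a nontrivial module — contradicting primality — unless its dichromatic number is small. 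With $B(k)<\infty$ in hand, the estimate $\dichi(D)\le B(k)^{\,k-1}$ completes the proof.
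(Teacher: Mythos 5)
This theorem is quoted from Cook, Masa\v{r}\'ik, Pilipczuk, Reinald, and Souza; the present paper does not prove it, but Section \ref{sec:brooms} adapts that proof's machinery (path-minimizing closed tournaments, nice sets, induction on the clique number $\omega$), which is entirely different from your modular-decomposition approach. Your reduction itself is correct and clean: the substitution inequalities for $\dichi$, the classification of quotients in the modular decomposition of an oriented graph as transitive tournaments, edgeless digraphs, or prime digraphs, the observation that only prime quotients can carry directed cycles, and the construction of the transitive tournament $\{u_1,\dots,u_s\}$ bounding the number of active nodes on a root-to-leaf path by $k-1$ all check out. This yields $\dichi(D)\le B(k)^{k-1}$ provided prime $\{TT_k,\vec{P_4}\}$-free digraphs have dichromatic number at most $B(k)$.

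The genuine gap is exactly where you flag it: the prime case is not a base case that can be dispatched, it \emph{is} the theorem. Your two proposed routes do not work. Prime $\{TT_3,\vec{P_4}\}$-free digraphs can be arbitrarily large (an alternating orientation $\rightarrow\leftarrow\rightarrow\leftarrow\cdots$ of a long induced path is prime, contains no directed $\vec{P_3}$ and no triangle), so you cannot bound the number of vertices; and there is no reason to expect that a prime $\vec{P_4}$-free digraph of large dichromatic number must contain a nontrivial module --- primality does not by itself tame dichromatic number, and the heuristic that ``the complexity must come from substitution'' is unsupported (the hard examples of large $\dichi$, such as shift-graph constructions, are not built by substitution). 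The actual proof has to do real work on arbitrary $\vec{P_4}$-free digraphs of bounded clique number: it inducts on $\omega$, extracts a path-minimizing closed tournament $C$, and shows that a suitable closed neighbourhood of $C$ is a nice set whose removal permits the induction, with the $\vec{P_4}$-freeness used to control how vertices attach to $C$ and to the connecting path. None of that difficulty is absorbed by your reduction, so the proposal as it stands does not constitute a proof.
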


We investigate a weakening of Conjecture \ref{open-conjecture} by forbidding two specific oriented forests, called brooms. For an integer $r\geq 1$, let the $r$\textit{-broom}, denoted by $B_r$, be the graph defined as follows:
$$
B_r:=(\{v_1, v_2, v_3, w_1,\dots, w_r\}, \{v_1v_2, v_2v_3, v_3w_1, \dots, v_3w_r  \}).
$$
\begin{figure}
    \centering
    \includegraphics[scale=0.6]{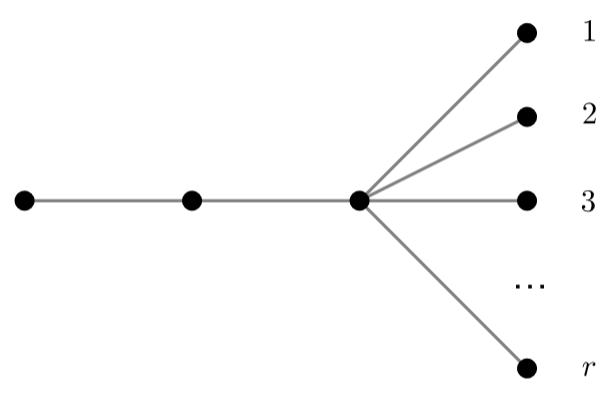}
    \caption{An illustration of $B_r$.}
    \label{intro-1}
\end{figure}
See Figure \ref{intro-1}. If $\mathcal{B}$ is an orientation of $B_r$ and $\mathcal{B'}$ is an orientation of $B_s$, then we say $\mathcal{B}$ and $\mathcal{B'}$ have \textit{opposing orientations} if $v_2v_3\in A(\mathcal{B})$ and $v_3v_2\in A(\mathcal{B'})$. Furthermore, a \textit{valid orientation} $\mathcal{B}$ of $B_r$ is an orientation such that either $\{v_3w_1,\dots,v_3w_r \}\subseteq A(\mathcal{B})$ or $\{w_1v_3, \dots, w_rv_3 \}\subseteq A(\mathcal{B})$. We prove the following, a strengthening of an unpublished result due to Linda Cook and Seokbeom Kim (private communication):

\begin{restatable}{thm}{dichimainone}
\label{dichi-main1}
    Let $r, s, t$ be positive integers. If $\mathcal{B}$ and $\mathcal{B'}$ are valid opposing orientations of $B_r$ and $B_s$ respectively, then $\{\mathcal{B}, 
    \mathcal{B'},TT_{t}\}$ is $\dichi$-finite.
\end{restatable}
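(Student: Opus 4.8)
The plan is to reduce to a convenient normal form, then bound $\dichi$ by a breadth-first-search levelling argument, invoking the forbidden brooms only at the very end. First, set $k:=\max(r,s)$. Since $\mathcal{B}$ is an induced subdigraph of the unique valid orientation $\mathcal{B}^{+}$ of $B_k$ that copies the orientations of $v_1v_2$ and $v_2v_3$ from $\mathcal{B}$ and gives every star edge the star direction of $\mathcal{B}$, every $\mathcal{B}$-free digraph is $\mathcal{B}^{+}$-free; as the same holds for $\mathcal{B'}$ and the resulting two brooms are again valid and opposing, I may assume $r=s=k$. Reversing all arcs if necessary, I may also assume $v_2v_3\in A(\mathcal{B})$ and $v_3v_2\in A(\mathcal{B'})$; the orientations of the two handles and the two star directions then give finitely many cases, treated in parallel using the evident symmetries. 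Throughout I use the following consequence of $TT_t$-freeness: there is $\rho=\rho(t)$ with every tournament on $\rho$ vertices containing $TT_t$, so a $TT_t$-free digraph has underlying clique number below $\rho$, and hence by Ramsey's theorem every set of at least $R:=R(k,\rho)$ vertices of a $\{\mathcal{B},\mathcal{B'},TT_t\}$-free digraph contains a stable set of size $k$.

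The backbone is the levelling inequality $\dichi(D)\le 2\max_i\dichi(D[L_i])$, where the $L_i$ are the out-BFS levels from any chosen vertex ($L_0=\{v_0\}$, $L_{i+1}=N^{+}(L_i)\setminus(L_0\cup\dots\cup L_i)$): every arc stays inside a level, goes to the next level, or goes backwards, so colouring even levels from one palette of size $c:=\max_i\dichi(D[L_i])$ and odd levels from a disjoint palette of the same size, and properly dicolouring each level within its palette, forces any monochromatic directed cycle to lie inside a single level, which is impossible; the parts of $D$ not reached from $v_0$ are coloured the same way with the same palettes, since no directed cycle crosses between them. Thus it suffices to bound $\dichi(D[L])$ for an out-BFS level $L$, and then, applying the analogous reduction with in-neighbourhoods inside $D[L]$, to bound $\dichi(D[S])$ for a set $S$ that is an in-BFS level of $D[L]$. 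Now each $x\in S$ carries an external context: following the out-BFS of $D$ backwards it has in-neighbours in earlier levels, and following the in-BFS of $D[L]$ backwards it has out-neighbours in earlier levels of $D[L]$, all lying outside $S$, and together with $x$ these contain a directed path through $x$ of any prescribed bounded length, in particular one realizing the edges $v_1v_2$ and $v_2v_3$ of a broom with $v_3=x$.

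If every vertex of $S$ has bounded out-degree and bounded in-degree inside $D[S]$, then $D[S]$ is degenerate and $\dichi(D[S])$ is bounded, and we are done. Otherwise some $x\in S$ has at least $R$ out-neighbours, or at least $R$ in-neighbours, inside $S$, so by the Ramsey tool the corresponding one-sided neighbourhood of $x$ in $S$ contains a large stable set; I would combine $x$, the two external context vertices playing $v_1$ and $v_2$, and a size-$k$ stable subset of that neighbourhood playing the star, to exhibit a copy of $\mathcal{B}$ or of $\mathcal{B'}$, contradicting the hypothesis. This is where the opposing condition is essential: whichever way the middle arc of the external $v_1,v_2,v_3$ path points, one of $\mathcal{B},\mathcal{B'}$ has that orientation of $v_2v_3$, and its star direction is matched to whether we chose out- or in-neighbours of $x$; a single broom, or two brooms agreeing on the direction of $v_2v_3$, would leave an orientation completing neither. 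The technical heart, and the step I expect to be the main obstacle, is ensuring that the copy is \emph{induced}: the external vertices chosen for $v_1$ and $v_2$ must be non-adjacent to the stable set, and $v_1$ non-adjacent to $x$, which the BFS does not supply. To force this I would iteratively delete bounded apex sets — a context vertex adjacent to a large part of $S$ is either absorbed into a bounded exceptional set or itself yields a smaller configuration of the same type to recurse on — and interleave the out- and in-levellings so that the accumulated context stays anticomplete to the current set; carrying this bookkeeping through uniformly over all handle and star orientation cases is the real work. Once it is done, $S$ contains no stable set of size $k$, so $|S|<R$ and $\dichi(D[S])\le |S|<R$, which bounds $\dichi(D)$ in terms of $k$ and $t$.
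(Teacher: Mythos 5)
Your levelling reduction is sound: the BFS-level bound $\dichi(D)\le 2\max_i\dichi(D[L_i])$ is correct, and the reduction to $r=s=k$ via embedding $\mathcal{B}$ into the valid orientation $\mathcal{B}^+$ of $B_k$ is a legitimate simplification the paper doesn't bother with. But the heart of the argument — producing an induced copy of $\mathcal{B}$ or $\mathcal{B'}$ once a vertex $x\in S$ has a large one-sided neighbourhood — has two gaps, one of which you acknowledge and one of which you don't.

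The gap you don't acknowledge is more serious. Your BFS predecessor chains give you a directed path $\cdots\to u'\to u\to x\to w\to w'\to\cdots$ through $x$, and from such a path the only handles $v_1\,v_2\,v_3$ you can read off with $v_3=x$ are $v_1\to v_2\to v_3$ (take $u'\to u\to x$) and $v_1\leftarrow v_2\leftarrow v_3$ (take $x\to w\to w'$). But a valid opposing pair $\mathcal{B},\mathcal{B'}$ constrains only the orientation of $v_2v_3$; the orientation of $v_1v_2$ is free, and the paper must (and does) treat all four handle shapes — these are its types 1--4. For the two ``bent'' handles $v_1\to v_2\leftarrow v_3$ and $v_1\leftarrow v_2\to v_3$ you need a vertex $v_2$ that simultaneously sees (or is seen by) both $v_1$ and $v_3$ with $v_1$ \emph{off the predecessor chain}; BFS gives you one parent arc per vertex, not this branching. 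Your remark that ``the opposing condition is essential'' addresses the $v_2v_3$ orientation, but says nothing about $v_1v_2$, so as written the argument only works if both $\mathcal{B}$ and $\mathcal{B'}$ happen to be straight-handle (types 1 and 4 in the paper's notation), which is one of four cases. The paper handles the bent-handle cases with the PMCT path $P$ precisely because a vertex seeing $P$ on \emph{both sides} (first versus last neighbour) is forced into a short-cutting argument; your BFS tree has no analogue of the ``first neighbour / last neighbour'' structure that powers that case split.

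The second gap, inducedness of the constructed broom, you do flag, and it is genuinely hard here. The BFS parents $u,u'$ sit in lower out-BFS levels and $w,w'$ in lower in-BFS levels, but nothing in the construction prevents, say, $u$ from being adjacent to most of $N^+(x)\cap S$, or $u'$ from being adjacent to $x$. Your proposed remedy (``iteratively delete bounded apex sets'') is the sort of thing that sometimes works, but in this setting the candidates for $v_1,v_2$ are themselves moving targets (they depend on $x$, and you need the anticompleteness to hold simultaneously for a $k$-subset of $N^+(x)\cap S$), and it is not clear the bookkeeping closes. Finally, a small slip: your conclusion ``$S$ contains no stable set of size $k$, so $|S|<R$'' overstates what the argument establishes — at best it shows every vertex of $S$ has fewer than $R$ out- and in-neighbours in $S$, hence bounded degree; that does bound $\dichi(D[S])$, but $|S|$ itself need not be small.

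For contrast, the paper avoids both problems by inducting on $\omega(D)$ and working with a path-minimizing closed tournament $C$: the key object is the $k$-nice set $N[C\cup X]$ (vertices with at most $k$ in- or out-neighbours outside), and the tournament-plus-path structure of $C$ supplies, for every vertex near $C$, a full anticompleteness picture (Claim~\ref{dichi-nice-set}) and both first- and last-neighbour arcs along $P$ (Claim~\ref{broom-types} and Sections~5.1--5.4), exactly what is needed for all four handle types. Your levelling idea could conceivably replace Lemma~\ref{k-nice-sets} as a wrapper, but the core extraction of an induced broom needs a richer combinatorial structure than BFS levels provide.
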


We give a brief outline of how we prove our results. 

For Theorems \ref{main3} and \ref{main5}, we construct a sequence of digraphs with large dichromatic number that in the case of Theorem \ref{main3}, have no cyclic triangle, or in the case of Theorem \ref{main5}, only constrained cyclic triangles. We sketch the construction given for Theorem \ref{main3}, as the construction is very similar for Theorem \ref{main5}. Our starting point is a classical construction of a graph with no short odd cycles, and large chromatic number -- the shift graph (see, for example, \cite{chi-survey}). Shift graphs were also used by Aboulker, Aubian, and Charbit \cite{complete-multipartite} in the proof of Theorem \ref{about-122}. For Theorem \ref{main3}, we will use a $7$-tuple-shift graph. To turn this into a digraph, we simply orient in the natural fashion, that is, we orient edges from $(a,b,c,d,e,f,g) \to (b,c,d,e,f,g, \bullet)$ (in other words, edges correspond to one ``shift'' of the sequence). This results in a graph with dichromatic number $1$, so to remedy this, after four such shifts, we add a backedge (that is, adding edges of the form $(a,b,c,d,\bullet,\bullet,\bullet) \to (\bullet,\bullet,\bullet,a,b,c,d)$). By a well-known theorem of Gallai-Hasse-Roy and Vitaver, this forces the graph to have large dichromatic number. Now we need to forbid oriented stars of degree at least five, and to do this we add transitive tournaments in a careful way to the neighbourhoods of vertices. Finally, one can check that the resulting graph has large dichromatic number, no cyclic triangle, and no induced oriented star of degree at least five, completing the proof. 


For Theorem \ref{main2}, we require multiple steps. First we look at the class of ``$k$-(co)local" graphs. These are graphs that have the property that for every vertex $v$, the out-(in)-neighbourhood of $v$ induces a digraph with dichromatic number at most $k$. We show that if we are given a digraph $F$ which behaves well with respect to heroes (which we call localized and colocalized) in $k$-local $(rK_{1} + F)$-free digraphs, then we can construct new heroes from smaller heroes by the operations given in Theorem \ref{main2}. To prove this, we follow the ideas devised by Harutyunyan, Le, Newman, and Thomassé \cite{dense-digraphs} to prove Theorem \ref{dense-digraphs-main} (what they describe as characterizing superheroes), generalizing it to the setting of localized and colocalized graphs. With this in hand, to prove Theorem \ref{main2} it will suffice to show that $\vec{P_{3}}$ is localized, colocalized and has a property that we call cooperation. We will introduce a concept of ``domination" which we show implies the localization properties, and thus prove the theorem. The final step will be to prove that $\vec{P_{3}}$ has the domination property, which will then imply the theorem immediately.

For Theorem \ref{dichi-main1}, we follow a similar approach as Cook et al. \cite{p4-dichi-bounded} in their proof that $\vec{P_{4}}$-free digraphs have dichromatic number bounded by a function of their clique number. For a digraph $G$, let $\omega(G)$ denote the clique number of the underlying undirected graph of $G$. Cook et al. proceed by considering what they call a \textit{path minimizing closed tournament} and using this, they find a so called ``nice set" (we defer the definition of this until later). Nice sets are a well-known concept which first appeared in \cite{gyarfas-sumner-digraphs}, and if one can show they exist, it immediately implies Theorem \ref{dichi-main1}. We will not be able to find a nice set, but by using path minimizing closed tournaments in a similar fashion to the Cook et al. proof, we will find a slightly weaker set, which we will call a $k$-nice-set, whose existence immediately implies Theorem \ref{dichi-main1}. The majority of the difference in our result from the Cook et al. result is the additional complications that arise when trying to find a $k$-nice set rather than a nice set.

We end the introduction by outlining the structure of the paper. In Section \ref{sec:stars} we prove Theorem \ref{main3} and Theorem \ref{main5}. In Section \ref{sec:local} we build the critical tools which will lead to the prove of Theorem \ref{main2}. In Section \ref{sec:heroes} we prove Theorem \ref{main2}. In Section \ref{sec:brooms} we prove Theorem \ref{dichi-main1}.

\section{Forbidding oriented stars} \label{sec:stars}

In this section, we prove Theorem \ref{main3} and Theorem \ref{main5}. The following definitions will be needed throughout. When considering a digraph $D=(V(D), A(D))$ where $uv\in A(D)$, we say $u$ \textit{sees} $v$, and $v$ \textit{is seen by} $u$. For a digraph $D$, when we say that $X_1 \subseteq V(D)$ is complete (resp. anticomplete) to $X_2\subseteq V(D)$, we mean that this is the case for the underlying undirected graph of $D$. Additionally, $X_1$ is \textit{in-complete} (resp. \textit{out-complete}) to $X_2$ if every vertex in $X_1$ is seen by (resp. sees) every vertex in $X_2$. 

\subsection{Heroes for oriented stars of degree at least five}
\label{sec-main3}

In this subsection, we prove Theorem \ref{main3}, which we restate for the reader's convenience:

\mainthree*

To prove this theorem, as well as Theorem \ref{main5}, we need the following family of graphs. Let $n$ and $k$ be integers such that $n>2k>2$. The $k$\textit{-tuple shift-graph} with indices in $[n]$ is the graph whose vertices are of the form $(x_1, \dots, x_k)$, where $x_i\in [n]$ for every $i\in [k]$ and $x_i<x_{i+1}$ for every $i\in [k-1]$. Furthermore, two vertices $(a_1, \dots, a_k)$ and $(b_1, \dots, b_k)$ are adjacent if $a_{i+1}=b_i$ for every $i\in [k-1]$ or vice versa. In \cite{shift-graphs}, Erd\H{o}s proved the following. 

\begin{thm}[Erd\H{o}s \cite{shift-graphs}]
\label{shift-graphs}
    For every fixed $k$, if $G_n$ is the $k$-tuple shift-graph with indices in $[n]$, then $\chi(G_n)\rightarrow \infty$ as $n\rightarrow \infty$.
\end{thm}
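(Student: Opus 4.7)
The plan is to derive this from Ramsey's theorem for $k$-uniform hypergraphs. By definition, vertices of $G_n$ are in bijection with $k$-element subsets of $[n]$ (each vertex is an increasing $k$-tuple). So any proper colouring $c : V(G_n) \to [m]$ induces an $m$-colouring of $\binom{[n]}{k}$. I will show that this colouring must fall within the scope of a monochromatic $(k+1)$-subset guaranteed by Ramsey, and that such a monochromatic $(k+1)$-subset immediately witnesses an edge of $G_n$ whose endpoints receive the same colour, a contradiction.

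More concretely, let $R_k(k+1;m)$ denote the smallest integer $N$ such that every $m$-colouring of $\binom{[N]}{k}$ contains a monochromatic $(k+1)$-subset; this is finite by the $k$-uniform Ramsey theorem. Assuming $\chi(G_n) \le m$ and applying this to the induced $m$-colouring of $\binom{[n]}{k}$ with $n \ge R_k(k+1;m)$, I obtain elements $x_1 < x_2 < \cdots < x_{k+1}$ of $[n]$ such that every $k$-subset of $\{x_1,\ldots,x_{k+1}\}$ receives the same colour under $c$.

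The key observation is that among these $k{+}1$ monochromatic $k$-subsets, two of them, namely $a = (x_1, x_2, \ldots, x_k)$ and $b = (x_2, x_3, \ldots, x_{k+1})$, are adjacent in $G_n$: indeed $a_{i+1} = x_{i+1} = b_i$ for every $i \in [k-1]$, matching the shift-graph adjacency condition. Since $c(a)=c(b)$, this contradicts properness. Hence $\chi(G_n) \ge m+1$ whenever $n \ge R_k(k+1;m)$, and since $m$ was arbitrary, $\chi(G_n) \to \infty$ as $n \to \infty$.

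There is no substantive obstacle here; the only thing to check carefully is the index bookkeeping that makes $(x_1,\ldots,x_k)$ and $(x_2,\ldots,x_{k+1})$ into an edge of $G_n$, and the quantitative bound $\chi(G_n) \ge m+1$ for $n \ge R_k(k+1;m)$ coming from (hypergraph) Ramsey. In particular, the argument gives a tower-type lower bound on $\chi(G_n)$ in terms of $n$ and $k$, which is more than enough for the qualitative statement used later in the paper.
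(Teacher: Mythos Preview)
Your argument is correct: identifying vertices of $G_n$ with $k$-subsets of $[n]$ and applying the $k$-uniform Ramsey theorem to a putative $m$-colouring yields a monochromatic $(k{+}1)$-set $\{x_1<\cdots<x_{k+1}\}$, and the two ``consecutive'' $k$-subsets $(x_1,\ldots,x_k)$ and $(x_2,\ldots,x_{k+1})$ form an edge of $G_n$, contradicting properness. This is a standard and clean route to the result.

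Note, however, that the paper does not give its own proof of this theorem at all: it is quoted as a result of Erd\H{o}s and cited to \cite{shift-graphs}, and is used as a black box in the constructions of Section~\ref{sec:stars}. So there is nothing to compare against; you have simply supplied a valid proof where the paper relies on the literature.

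One small quibble: your closing remark that the argument yields a ``tower-type lower bound on $\chi(G_n)$'' is backwards. Since the hypergraph Ramsey number $R_k(k{+}1;m)$ grows (at most) as a tower in $m$, inverting gives only an iterated-logarithm lower bound $\chi(G_n)\gtrsim \log^{(k-1)} n$, not a tower. This does not affect the qualitative conclusion $\chi(G_n)\to\infty$, which is all the paper needs.
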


We will also use the Gallai–Hasse–Roy–Vitaver Theorem (see \cite{gallai-roy-1, gallai-roy-2}):

\begin{thm}[Gallai-Hasse-Roy-Vitaver]
\label{gallai-roy}
    If $D$ has no directed path of length $t$ as a (not necessarily induced) subgraph, and $G$ is the underlying undirected graph of $D$, then $\chi(G)\leq t$.
\end{thm}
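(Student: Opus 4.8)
The plan is to prove the statement directly by producing an explicit proper colouring of $G$ with at most $t$ colours. First I would replace $D$ by a spanning subdigraph $D'$ of $D$ that is acyclic and maximal with this property under arc inclusion; concretely, start from the spanning subdigraph with no arcs and keep adding arcs of $D$ as long as no directed cycle appears. The resulting $D'$ is acyclic, has vertex set $V(D)$, and has the key maximality feature: for every arc $uv \in A(D) \setminus A(D')$, the digraph $D' + uv$ contains a directed cycle, and since every such cycle must traverse $uv$, the digraph $D'$ contains a directed path from $v$ to $u$.

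Since $D'$ is acyclic, for each vertex $v$ I can define $f(v)$ to be the maximum number of vertices on a directed path of $D'$ whose last vertex is $v$; this is a well-defined positive integer because acyclicity forbids arbitrarily long directed walks. Because $D' \subseteq D$ and $D$ contains no directed path of length $t$, every directed path of $D'$ has at most $t$ vertices, so $f$ is a function from $V(D)$ to $\{1, \dots, t\}$.

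The core step is to verify that $f$ is a proper colouring of the underlying graph $G$, i.e.\ $f(u) \neq f(v)$ for every arc $uv$ of $D$. If $uv \in A(D')$, take a longest directed path $Q$ of $D'$ ending at $u$; it cannot contain $v$, since otherwise $Q$ together with the arc $uv$ would close a directed cycle in $D'$, so appending $uv$ to $Q$ produces a directed path ending at $v$ with one more vertex, giving $f(v) \ge f(u) + 1$. If $uv \notin A(D')$, then by maximality there is a directed path $P$ of $D'$ from $v$ to $u$; taking a longest directed path $Q$ of $D'$ ending at $v$, acyclicity of $D'$ forces $Q$ and $P$ to share only the vertex $v$ (a common vertex other than $v$ would create a directed closed walk, hence a directed cycle, in $D'$), so the concatenation of $Q$ followed by $P$ is a genuine directed path of $D'$ ending at $u$ with strictly more vertices than $Q$, giving $f(u) \ge f(v) + 1$. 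In every case the two ends of an arc receive distinct values, so $f$ is a proper colouring of $G$ with colours in $\{1, \dots, t\}$, whence $\chi(G) \le t$.

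The only delicate point I anticipate is the case $uv \notin A(D')$: here one must use maximality of $D'$ to obtain the $v$-to-$u$ directed path and then use acyclicity of $D'$ to ensure that this path and a longest path ending at $v$ can be concatenated into an actual vertex-simple directed path rather than merely a directed walk. The remaining ingredients — finiteness and well-definedness of $f$, the bound $f(v) \le t$, and the case $uv \in A(D')$ — are routine once the acyclic spanning subdigraph $D'$ has been fixed.
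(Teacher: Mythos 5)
The paper does not prove this statement; it is the classical Gallai--Hasse--Roy--Vitaver theorem, cited from the literature. Your argument is correct and is essentially the standard proof: pass to a maximal acyclic spanning subdigraph, colour each vertex by the order of a longest directed path ending there, and use maximality plus acyclicity to handle arcs outside the subdigraph. All the delicate points you flag (the vertex-disjointness needed to concatenate $Q$ and $P$, and the bound $f(v)\le t$ coming from the absence of a directed path on $t+1$ vertices) are handled correctly.
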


Like Aboulker, Aubian, and Charbit \cite{complete-multipartite} in the proof of Theorem \ref{about-122}, we orient the shift graph acyclically in the natural way, and add ``back-edges'' carefully to increase its dichromatic number. The following is the main result of this subsection:

\begin{thm}\label{counterexample-1-0}
    There exists digraphs $F_1, F_2, \dots$ such that:
    \begin{itemize}
        \item $\dichi(F_n)\rightarrow \infty$ as $n\rightarrow \infty$;

        \item for every $n\geq 1$ and $v\in V(F_n)$, the neighbourhood of $v$ can be partitioned into four tournaments; and

        \item for every $n\geq 1$, the digraph $F_n$ has no cyclic triangle $\Delta(1, 1, 1)$.
    \end{itemize}
\end{thm}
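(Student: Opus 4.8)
The plan is to build $F_n$ from a $7$-tuple shift graph on indices in $[n]$, orient it acyclically in the ``natural shift'' direction, then add back-arcs after four shifts to force large dichromatic number, and finally insert transitive tournaments into the structure so that the neighbourhood of every vertex splits into four tournaments while no cyclic triangle is created. Concretely, I would let the vertex set consist of strictly increasing $7$-tuples $(x_1,\dots,x_7)$ with entries in $[n]$, and add a ``shift arc'' from $(a,b,c,d,e,f,g)$ to $(b,c,d,e,f,g,h)$ whenever that is a valid vertex; this underlying graph is (a blow-up of) the shift graph of Theorem \ref{shift-graphs}, hence has unbounded chromatic number as $n\to\infty$. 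To get unbounded \emph{dichromatic} number, I add a back-arc from $(a,b,c,d,\ast,\ast,\ast)$ to $(\ast,\ast,\ast,a,b,c,d)$, i.e. from a tuple to the one obtained by shifting its first four coordinates into the last four positions (matching on the shared $4$-tuple $(a,b,c,d)$). Any monochromatic class in a dicolouring, restricted to the shift arcs, is acyclic, hence by Theorem \ref{gallai-roy} has a long directed shift-path in some colour class only if that class has large undirected chromatic number; combining the shift structure with the back-arcs, a short monochromatic directed path together with a back-arc closes a directed cycle, so each colour class must avoid long directed shift-paths, and then Theorem \ref{gallai-roy} plus Theorem \ref{shift-graphs} forces many colours. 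I would make this precise by showing a $k$-dicolouring yields a proper colouring of the shift graph with $O(k)$ colours.

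The second bullet — partitioning $N(v)$ into four tournaments — is where the transitive tournaments get inserted, and this is the main obstacle. The neighbours of a tuple $v=(x_1,\dots,x_7)$ come in a bounded number of ``types'' according to which shift or back-arc connects them to $v$: out-neighbours via a forward shift, in-neighbours via a forward shift, out-neighbours via a back-arc, and in-neighbours via a back-arc. For each type, two neighbours of $v$ of that type agree on six of their seven coordinates (they differ only in the one ``free'' coordinate), so a neighbour of a given type is essentially indexed by a single element of $[n]$; I would exploit this by replacing each vertex of the shift graph by a transitive tournament ordered by that free index, or more carefully, by adding arcs between same-type neighbours of $v$ oriented according to the natural order of the differing coordinate, so that each of the four types becomes a tournament. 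The care needed is that these newly added ``tournament arcs'' must be defined consistently and globally (an arc added because $u,u'$ are both out-shift-neighbours of $v$ must not conflict with an arc forced by some other common neighbour), which is why one uses a single linear order on $[n]$ throughout; and one must check the neighbourhood really decomposes into \emph{exactly} these four classes with no cross arcs left unassigned.

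For the third bullet, I would argue no cyclic triangle survives by a case analysis on the three arcs of a putative $\Delta(1,1,1)$: the arcs are either shift arcs, back-arcs, or tournament arcs. Shift and back arcs all respect the lexicographic-type order on $7$-tuples (a shift or a ``first-four-to-last-four'' back-arc strictly advances a natural potential function, e.g. $x_1$ or the position in the shift), so a triangle using only those is impossible as it would give a directed cycle in an acyclic order; any triangle must therefore use at least one tournament arc, but tournament arcs only join two common neighbours of a \emph{third} vertex $v$, and chasing the constraints (the tournament arc forces its endpoints to share six coordinates, the remaining two arcs of the triangle are then constrained shift/back arcs whose coordinate patterns cannot close up) yields a contradiction. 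The bulk of the write-up will be this last verification together with making the dichromatic lower bound rigorous; I expect the neighbourhood-partition argument to require the most bookkeeping, since it is what dictates exactly which potential function and which back-arc offset ($4$ out of $7$) make all three properties hold simultaneously.
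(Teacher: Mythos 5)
Your overall blueprint — orient the $7$-tuple shift graph, add back-arcs between tuples that agree on a $4$-block (a ``four-shift'' offset), invoke Gallai--Hasse--Roy--Vitaver plus Theorem~\ref{shift-graphs} to force large dichromatic number, and then add tournament arcs and do a case analysis to kill cyclic triangles — is exactly the paper's approach. But the part you correctly flag as the crux, the four-tournament partition of $N(v)$, is where your description goes wrong, and the error is not merely bookkeeping.

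First, your statement that ``two neighbours of $v$ of [a given] type agree on six of their seven coordinates'' and so are ``indexed by a single element of $[n]$'' is false for the back-arc types: the out-neighbours of $v=(a,b,c,d,e,f,g)$ via a back-arc have the form $(\ast,\ast,\ast,a,b,c,d)$, so three coordinates are free, and ``order by the differing coordinate'' is not well-defined there. Second, and more seriously, whatever consistent global rule you pick to make those four neighbour-types into tournaments will necessarily also create \emph{new} neighbours of $v$ that are in none of your four types. The paper's rule is: join $u,u'$ by an arc (oriented by a fixed total order on $V(F_n)$) whenever they share the same first four coordinates (set $Z_1$) or the same last four coordinates ($Z_2$). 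Once you do this, $v$ picks up two extra neighbour sets: $M=\{(a,b,c,d,\ast,\ast,\ast)\}$, the other vertices sharing $v$'s $4$-prefix, and $N=\{(\ast,\ast,\ast,d,e,f,g)\}$, the other vertices sharing $v$'s $4$-suffix. These are not shift-neighbours and not back-arc-neighbours of $v$, so your check that the neighbourhood ``decomposes into exactly these four classes'' would fail. The actual decomposition has six pieces, and the reason the count comes back down to four is that the back-arcs $Y$ make $M$ complete to the out-back-arc set $A$ and make the in-back-arc set $B$ complete to $N$ (since $(a,b,c,d,\ast,\ast,\ast)\to(\ast,\ast,\ast,a,b,c,d)$ and $(d,e,f,g,\ast,\ast,\ast)\to(\ast,\ast,\ast,d,e,f,g)$ are $Y$-arcs), so $M\cup A$ and $B\cup N$ are each a single tournament. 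Together with the two shift-neighbour tournaments that gives the required four. Without identifying $M$ and $N$ and this merging mechanism, the second bullet of the theorem doesn't follow, and the cyclic-triangle case analysis in the third bullet also needs to treat $Z_1\cup Z_2$ arcs as a separate case — which you gesture at (``tournament arcs'') but cannot carry out until the arc set is actually pinned down.
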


\begin{proof}
    Let $G_n$ be the 7-tuple shift-graph with indices in $[n]$, and let $D_n$ be the orientation of $G_n$ where $(a_1, \dots, a_7)(b_1,\dots,b_7)\in A(D_n)$ if $b_i=a_{i+1}$ for every $i\in [6]$. For every $v=(a_1,\dots,a_7)\in V(G_n)$, define $m(v)=a_4$. Let $X := A(D_n)$. That is, $X$ is the set of edges of the form $(\bullet, b, c, d, e, f, g)\rightarrow(b, c, d, e, f, g, \bullet)$.  Moreover, let $D_n'$ be the digraph with $V(D_n')=V(D_n)$ and $A(D_n')=X\cup Y$ where $Y$ is the set of arcs of the form $(a, b, c, d, \bullet, \bullet, \bullet)\rightarrow (\bullet, \bullet, \bullet, a, b, c, d)$. Note that as $m$ is strictly increasing along arcs in $X$, it follows that $X$ is acyclic. Likewise, $m$ is strictly decreasing in $Y$, so $Y$ is acyclic.

\sta{\label{counterexample-1-1}
For every $n\geq 1$, $\chi(G_n)/3\leq \vec{\chi}(D_n')$.
}
\begin{subproof}
    We will prove the claim by proving that a set of vertices that induces an acyclic set in $D_n'$ also induces a subgraph with chromatic number at most 3 in $G_n$. Let $\Lambda$ be a set of vertices that induces an acyclic set in $D_n'$. Notice that $D_n[\Lambda]$ does not have a directed path of length 3 because if such a path $v_1\rightarrow v_2\rightarrow v_3\rightarrow v_4$ exists, then $v_4v_1\in A(D_n')$ contradicting that $\Lambda$ is an acyclic set in $D_n'$. Thus, by Theorem \ref{gallai-roy}, we have $\chi(G_n[\Lambda])\leq 3$ as desired. 
\end{subproof}

Finally, let $F_n$ be the digraph with $V(F_n)=V(D_n')$ and $A(F_n) = X\cup Y \cup Z_1\cup Z_2$ where we define $Z_1$ and $Z_2$ as follows. Let $<$ be a total ordering of $V(F_n)$. Define $Z_1$ (resp. $Z_2$) as the set of edges such that $uv\in Z_1$ (resp. $uv\in Z_2$) if $u<v$ and there exists numbers $a, b, c, d$ (resp. $d, e, f, g$) such that both $u$ and $v$ are of the form $(a, b, c, d, \bullet, \bullet, \bullet)$ (resp. $(\bullet, \bullet, \bullet, d, e, f, g)$).

\sta{\label{counterexample-1-2}
If $v\in V(F_n)$, then $N_{F_n}(v)$ can be partitioned into four tournaments.
}
\begin{subproof}
    Fix $v = (a, b, c, d, e, f, g)\in V(F_n)$. The neighbours $u$ of $v$ such that $uv\in X$ or $vu\in X$ are of the form $(\bullet, a, b, c, d, e, f)$ and $(b, c, d, e, f, g, \bullet)$ respectively. By the definition of edges in $Z_1\cup Z_2$, vertices of these forms each induce a tournament. 

    Denote by $A$ and $B$ the neighbours of $v$ of the forms $(\bullet, \bullet, \bullet, a, b, c, d)$ and $(d, e, f, g, \bullet, \bullet, \bullet)$, respectively. Notice that these sets partition the neighbours of $v$ connected to $v$ via edges in $Y$. Denote by $M$ and $N$ the neighbours of $v$ of the form $(a, b, c, d, \bullet, \bullet, \bullet)$ and $(\bullet, \bullet, \bullet, d, e, f, g)$, respectively. Notice that these sets partition the neighbours of $v$ connected to $v$ via edges in $Z_1\cup Z_2$. By the definition of edges in $Z_1\cup Z_2$, each of the sets $A, B, M$ and $N$ induces a tournament. Furthermore, $M$ is complete to $A$, and $B$ is complete to $N$ via edges in $Y$. Since $A(F_n) = X\cup Y\cup (Z_1\cup Z_2)$, these are all the neighbours of $v$, thus finishing the proof.
\end{subproof}

\sta{\label{counterexample-1-3}
$F_n$ has no cyclic triangle.
}
\begin{subproof}
    Assume for a contradiction that there exist vertices $u, v, w$ such that $u$ sees $v$, $v$ sees $w$, $w$ sees $u$, and where $u=(a, b, c, d, e, f, g)$. 
    
    We claim that no edge in the cyclic triangle is in $Z_1\cup Z_2$. For a contradiction, assume without loss of generality that $uv\in Z_1\cup Z_2$, so $m(v) = d$. Assume first that $wu\in Z_1\cup Z_2$. Consequently, $m(w) = d$ as well, so $vw\in Z_1\cup Z_2$, which contradicts that the edges in $Z_1\cup Z_2$ form an acyclic orientation. Therefore, $wu\not\in Z_1\cup Z_2$. Assume next that $wu\in X$. Consequently, $m(w) = c$, so $vw\in X$. Thus, $v = (\bullet, \bullet, a, b, c, d, e)$, which contradicts that $m(v) = d$. Therefore, $wu\not\in X$. Thus, $wu\in Y$, so $w = (d, e, f, g, \bullet, \bullet, \bullet)$. But then $vw\not\in X\cup (Z_1\cup Z_2)$, so $vw\in Y$. Thus, the first index of $v$ is $g$. This contradicts that $m(v) = d$ since $d<g$. We conclude that no edge in the cyclic triangle is in $Z_1\cup Z_2$.

    We claim that no edge in the cyclic triangle is in $X$. For a contradiction, assume without loss of generality that $uv\in X$, so $v = (b, c, d, e, f, g, \bullet)$. Assume $vw\in X$. Consequently, $w = (c, d, e, f, g, \bullet, \bullet)$, so by definition $wu\not\in Y$. But $wu\not\in X$ since $m(u)\not = g$, which contradicts that $wu$ is an arc and $wu\not\in Z_1\cup Z_2$. Thus, $vw\in Y$, so $w =(\bullet, \bullet, \bullet, b, c, d, e)$. But then $wu\not\in X$ and $wu\not\in Y$. This contradicts that $wu$ is an arc and $wu\not\in Z_1\cup Z_2$. We conclude that no edge in the cyclic triangle is in $X$. But then every edge in the cyclic triangle is in $Y$, which contradicts that the edges in $Y$ induce an acyclic digraph. This finishes the proof.
\end{subproof}

The second and third bullet points are proven in (\ref{counterexample-1-2}) and (\ref{counterexample-1-3}) respectively. Since $F_n$ contains $D_n'$ as a subgraph, it follows that $\chi(G_n)/3 \leq \vec{\chi}(F_n')$ as well. As mentioned, the sequence $\chi(G_n)\rightarrow \infty$ as $n\rightarrow \infty$, so $\dichi(F_n)\rightarrow \infty$ as $n\rightarrow \infty$ as well. Thus, the first bullet point holds.
\end{proof}

\noindent\textbf{Proof of Theorem \ref{main3}: }Assume $F$ is a directed star of degree at least 5. By Theorem \ref{stars-dichi-bounded}, every transitive tournament is a hero in $F$-free digraphs. For the other direction, assume that $H$ is a hero in $F$-free digraphs. If $H$ is not transitive, then $H$ contains a cyclic triangle. Thus, the cyclic triangle is a hero in $F$-free digraphs. This, however, contradicts  Theorem \ref{counterexample-1-0} which provides a family of digraphs of arbitrarily high dichromatic number with no cyclic triangles and which is $F$-free (the construction is $F$-free because a copy of $F$ contains a vertex whose neighbourhood has a stable set with at least 5 vertices, contradicting that the neighbourhood of every vertex can be partitioned into four tournaments). Thus, we conclude that $H$ is transitive, which finishes the proof. \qed

\subsection{Heroes for oriented stars of degree 4}
\label{sec-main4}

In this section, we prove Theorem \ref{main5}, which we restate for the reader's convenience:

\mainfive*

The proof is similar to the proof of Theorem \ref{main3}. We start by first restricting some of the heroes in $F$-free digraphs when $F$ is an oriented star of degree 4. Let the \textit{in-triangle}, denoted by $IT$, be the digraph on 4 vertices $a, b, c, d$ where $d$ is in-complete from $a, b, c$ and where $\{a, b, c\}$ induces the cyclic triangle. The first step towards proving Theorem \ref{main5} is proving the following.

\begin{thm}\label{main4}
    If $ST$ is a directed star of degree at least 4, then no hero in $ST$-free digraphs contains the in-triangle as a subgraph.
\end{thm}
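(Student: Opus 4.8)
The plan is to mirror the construction in Theorem~\ref{counterexample-1-0}, but this time we only need to destroy copies of the in-triangle $IT$ rather than all cyclic triangles. Recall that $IT$ consists of a cyclic triangle $\{a,b,c\}$ together with a fourth vertex $d$ that is seen by each of $a,b,c$. Suppose for contradiction that $H$ is a hero in $ST$-free digraphs for some oriented star $ST$ of degree at least $4$, and that $H$ contains $IT$ as a subgraph. Since $H$-free digraphs in the class of $ST$-free digraphs have bounded dichromatic number, and any $IT$-free digraph is $H$-free, it suffices to produce a family of $ST$-free digraphs with unbounded dichromatic number and no copy of $IT$.

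First I would take a $k$-tuple shift-graph $G_n$ (with $k$ chosen appropriately — something like $k=7$ as in the degree-five case should work, since $4<5$ only makes the neighbourhood-partition requirement easier) and orient it in the natural acyclic ``shift'' fashion to get $D_n$, then add the back-arcs $Y$ as in the proof of Theorem~\ref{counterexample-1-0}; by Gallai–Hasse–Roy–Vitaver together with Theorem~\ref{shift-graphs}, the resulting digraph $D_n'$ has $\dichi(D_n') \to \infty$. Then I would add the clique-arcs $Z_1 \cup Z_2$ exactly as before so that the neighbourhood of every vertex is partitioned into four tournaments; this kills all induced copies of $ST$ since a copy of $ST$ of degree $\geq 4$ would force a stable set of size $\geq 4$ inside some neighbourhood. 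The one genuinely new point is to verify that the final digraph $F_n$ has no copy of $IT$ (induced or not — since $IT$ being forbidden as a subgraph of $H$ is what we need). For this I would argue that any cyclic triangle in $F_n$ — wait, $F_n$ from Theorem~\ref{counterexample-1-0} has no cyclic triangle at all, hence trivially no $IT$. So in fact the cleanest route is: the family $F_1, F_2, \dots$ from Theorem~\ref{counterexample-1-0} already has no cyclic triangle, and since every $IT$ contains a cyclic triangle, these $F_n$ have no $IT$ either; they also have unbounded dichromatic number and each neighbourhood splits into four tournaments, so they are $ST$-free. This immediately contradicts $H$ (hence $IT$, hence the cyclic triangle) being a hero in $ST$-free digraphs.

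Written out honestly, this means Theorem~\ref{main4} is essentially an immediate corollary of Theorem~\ref{counterexample-1-0}: a digraph with no cyclic triangle certainly has no in-triangle. So I expect the ``real'' content the authors have in mind is subtler — presumably they want a \emph{stronger} statement where the forbidden star has degree exactly $4$ (not $\geq 5$), so that the four-tournament neighbourhood partition is the \emph{tight} obstruction, and one must be more careful. In that reading the main obstacle is ensuring the construction with four-tournament neighbourhoods is genuinely $ST$-free for every orientation $ST$ of $K_{1,4}$, including the ``mixed'' orientations: one needs that in any copy of $ST$, the central vertex has a \emph{stable} set of size $4$ among its neighbours regardless of arc directions, which holds because $K_{1,4}$ has an independent set of size $4$ in its leaves — so the four-tournament bound on every neighbourhood does suffice.

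Thus the key steps, in order, are: (1) invoke Theorem~\ref{counterexample-1-0} to obtain the family $\{F_n\}$ with $\dichi(F_n)\to\infty$, every neighbourhood partitioned into four tournaments, and no cyclic triangle; (2) observe that no cyclic triangle implies no in-triangle, so each $F_n$ is $IT$-free; (3) observe that a four-tournament partition of each neighbourhood forces $ST$-freeness for any oriented star $ST$ of degree $\geq 4$, since the leaves of $K_{1,4}$ form a stable set of size $4$; (4) conclude that if some hero $H$ in $ST$-free digraphs contained $IT$ as a subgraph, then $IT$ — and hence the cyclic triangle contained in it — would be a hero in $ST$-free digraphs, contradicting the existence of the family $\{F_n\}$. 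The main subtlety to get right is step~(3), handling all arc-orientations of the star uniformly; everything else is bookkeeping built on Theorem~\ref{counterexample-1-0}.
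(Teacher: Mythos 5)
Your proposal has a real gap at the crucial step (step 3). You claim that because the neighbourhood of every vertex in $F_n$ (from Theorem~\ref{counterexample-1-0}) can be partitioned into \emph{four} tournaments, a stable set of size $4$ cannot occur in any neighbourhood, and hence $F_n$ is $ST$-free for $ST$ a star of degree $4$. But a partition of a neighbourhood into four tournaments only forbids stable sets of size \emph{five} (at most one vertex per tournament); stable sets of size $4$ are perfectly possible. So $F_n$ from Theorem~\ref{counterexample-1-0} may well contain induced copies of a degree-$4$ star, and the argument collapses. Your parenthetical remark that ``$4<5$ only makes the neighbourhood-partition requirement easier'' is exactly backwards: forbidding a degree-$4$ star is a \emph{stronger} demand and needs neighbourhoods covered by at most \emph{three} tournaments.

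This is precisely why the paper introduces a separate construction for this theorem. Instead of reusing Theorem~\ref{counterexample-1-0}, it builds a new family from the $5$-tuple shift graph (rather than the $7$-tuple one), arranged so that every neighbourhood partitions into \emph{three} tournaments. The price of this tighter neighbourhood structure is that cyclic triangles can no longer be avoided entirely, so the paper proves a weaker but sufficient structural fact: every cyclic triangle has exactly two arcs from the ``shift'' set $X$ and one from the back-arc set $Y$, and from this constrained triangle structure it deduces that the in-triangle $IT$ does not occur as a subgraph. Theorem~\ref{main4} then follows as you describe in step~(4), but from the new $IT$-free family with three-tournament neighbourhoods, not from the triangle-free family with four-tournament neighbourhoods. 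The essential new work you are missing is the separate $5$-tuple construction and the verification that $IT$ is excluded as a subgraph even though cyclic triangles are present.
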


This theorem is an immediate consequence to the following theorem.

\begin{thm}
    There exists digraphs $F_1, F_2, \dots$ such that:
    \begin{itemize}
        \item $\dichi(F_n)\rightarrow \infty$ as $n\rightarrow \infty$;

        \item for every $n\geq 1$ and $v\in V(F_n)$, the neighbourhood of $v$ can be partitioned into three tournaments; and

        \item for every $n\geq 1$, the digraph $IT$ is not a subgraph of $F_n$.
    \end{itemize}
\end{thm}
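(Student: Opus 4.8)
The plan is to adapt the construction from Theorem~\ref{counterexample-1-0} almost verbatim, with two modifications: use a $5$-tuple shift-graph instead of a $7$-tuple one, and add the back-edge after \emph{two} shifts rather than four. Concretely, let $G_n$ be the $5$-tuple shift-graph with indices in $[n]$, and let $D_n$ be its natural acyclic orientation with arcs $(a_1,\dots,a_5)\to(b_1,\dots,b_5)$ when $b_i=a_{i+1}$ for all $i\in[4]$. Define $m(v)=a_3$ for $v=(a_1,\dots,a_5)$, so that $m$ is strictly increasing along arcs of $X:=A(D_n)$. Add the set $Y$ of back-arcs of the form $(a,b,c,\bullet,\bullet)\to(\bullet,\bullet,a,b,c)$, along which $m$ is strictly decreasing; so both $X$ and $Y$ are acyclic. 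Exactly as in Claim~\ref{counterexample-1-1}, an acyclic set $\Lambda$ of $X\cup Y$ cannot contain a directed $X$-path on $3$ arcs (such a path $v_1\to v_2\to v_3\to v_4$ would force $v_4v_1\in Y$), so by the Gallai--Hasse--Roy--Vitaver theorem $\chi(G_n[\Lambda])\le 3$; hence $\chi(G_n)/3\le\vec\chi(D_n\cup Y)$, and by Theorem~\ref{shift-graphs} this tends to infinity, giving the first bullet.

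Next I would add, as before, the tournament-completion arc sets $Z_1$ and $Z_2$: fixing a total order $<$ on $V(F_n)$, let $uv\in Z_1$ iff $u<v$ and both $u,v$ have the form $(a,b,c,\bullet,\bullet)$ for common $a,b,c$, and $uv\in Z_2$ iff $u<v$ and both have the form $(\bullet,\bullet,c,d,e)$ for common $c,d,e$. Set $F_n=(V(D_n),X\cup Y\cup Z_1\cup Z_2)$. For the second bullet, fix $v=(a,b,c,d,e)$ and list its neighbours by arc type: the $X$-neighbours are of the forms $(\bullet,a,b,c,d)$ and $(b,c,d,e,\bullet)$ — each a single tournament by definition of $Z_1,Z_2$; the $Y$-neighbours are of the forms $(\bullet,\bullet,a,b,c)$ and $(c,d,e,\bullet,\bullet)$; the $Z_1\cup Z_2$-neighbours are of the forms $(a,b,c,\bullet,\bullet)$ and $(\bullet,\bullet,c,d,e)$. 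The key point — just as in Claim~\ref{counterexample-1-2} — is that the form $(b,c,d,e,\bullet)$ is a subset of $(c,d,e,\bullet,\bullet)$-type vertices and the form $(\bullet,a,b,c,d)$ is a subset of $(\bullet,\bullet,a,b,c)$-type vertices (checking which coordinates agree), and likewise $Z$-type neighbours merge into $Y$-type classes via common sub-strings, so that the at-most-six classes listed above collapse into exactly three classes, each inducing a tournament, giving the partition of $N_{F_n}(v)$ into three tournaments.

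The main work is the third bullet: $IT$ is not a subgraph of $F_n$. Suppose $\{a,b,c\}$ induces a cyclic triangle and $d$ is in-complete from $a,b,c$. The first step is to re-run the argument of Claim~\ref{counterexample-1-3} to understand cyclic triangles: one shows that in any cyclic triangle of $F_n$, not all three arcs lie in $Z_1\cup Z_2$ (the $m$-values would all be equal, contradicting acyclicity of $Z_1\cup Z_2$), not all three lie in $X$, and not all three lie in $Y$; and then a case analysis on the remaining mixed-type possibilities pins down, up to symmetry, the precise structure of a cyclic triangle $\{a,b,c\}$ — I expect it must use one arc from each of $X$, $Y$, and $Z_1\cup Z_2$, forcing specific coordinate patterns on $a,b,c$ (e.g.\ one vertex is a forward shift of another and the third is a backward shift). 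The hard part is then showing that no fourth vertex $d$ can simultaneously have an arc \emph{into} each of the three vertices of such a triangle: having determined the coordinate strings of $a,b,c$, one checks for each possible arc type ($X$, $Y$, $Z_1$, $Z_2$) of $da$, $db$, $dc$ that the constraints on $d$'s coordinates are mutually contradictory (for instance two of them would force incompatible values in the same coordinate of $d$, or force $m(d)$ to take two different values, or violate the $<$-orientation of $Z_1\cup Z_2$). This coordinate bookkeeping — keeping track across four vertices and four arc types of which string positions are forced equal — is the only genuinely delicate part; everything else is a direct transcription of the degree-at-least-five proof with the tuple length reduced from $7$ to $5$ and the back-edge shift reduced from $4$ to $2$. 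Finally, since $F_n$ contains $D_n\cup Y$ as a subdigraph, $\vec\chi(F_n)\ge\chi(G_n)/3\to\infty$, completing the proof; Theorem~\ref{main4} then follows because a copy of a degree-$4$ directed star in $F_n$ would need a vertex whose neighbourhood contains an independent set of size $4$, contradicting the three-tournament partition, and $IT$ contains such a star centred at $d$.
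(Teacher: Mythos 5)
Your construction matches the paper's exactly: the $5$-tuple shift graph, the $2$-shift back-edge set $Y = (a,b,c,\bullet,\bullet) \to (\bullet,\bullet,a,b,c)$, and the two tournament arc sets $Z_1, Z_2$. However, the proposal contains concrete errors in the first two bullets and, more importantly, does not actually prove the third bullet, which is where all the work lies. For the first bullet: already a $2$-arc $X$-path closes up, since if $v_1\to v_2\to v_3$ in $X$ with $v_1=(a,b,c,d,e)$ then $v_3=(c,d,e,\bullet,\bullet)$ and $v_3v_1\in Y$; your claim that a $3$-arc path $v_1\to\dots\to v_4$ forces $v_4v_1\in Y$ is actually false for $5$-tuples (it would require three consecutive coordinates of $v_1$ to coincide, impossible in a shift graph) and appears to be carried over uncritically from the $7$-tuple setting. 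For the second bullet: the two containment claims are simply false. A vertex of form $(b,c,d,e,\bullet)$ has first coordinate $b\neq c$, so it is not of form $(c,d,e,\bullet,\bullet)$, and similarly $(\bullet,a,b,c,d)$ ends in $d\neq c$ so is not of form $(\bullet,\bullet,a,b,c)$. The correct merging is via $Y$-completeness, not containment: the two $X$-neighbour types jointly form a clique because every $(b,c,d,e,\bullet)$-vertex sends a $Y$-arc to every $(\bullet,a,b,c,d)$-vertex (the latter equals $(\bullet,\bullet,b,c,d)$); then the $Z_1$-type $(a,b,c,\bullet,\bullet)$ is $Y$-out-complete to the $Y$-type $(\bullet,\bullet,a,b,c)$, and the $Y$-type $(c,d,e,\bullet,\bullet)$ is $Y$-out-complete to the $Z_2$-type $(\bullet,\bullet,c,d,e)$, giving the three tournaments.

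The real gap is the third bullet, which you only outline and for which your structural guess is incorrect. Cyclic triangles in $F_n$ do not use one arc from each of $X$, $Y$, $Z_1\cup Z_2$; the paper shows they have exactly two arcs in $X$ and one in $Y$, with no $Z$-arcs at all. (One first rules out any $Z$-arc via $m$-values: $Z$ preserves $m$, $X$ advances $m$ by one position, $Y$ retreats it by two; a short case analysis kills each mixed option, and then acyclicity of $X$ and of $Y$ separately forces the $2+1$ split.) Only with this characterization in hand can one set $u\to v\to w\to u$ with $uv,vw\in X$, $v=(a,b,c,d,e)$, and check that no vertex $x$ can receive arcs from all of $u,v,w$ — each of the three arc types for $ux$ forces a contradictory value of $m(x)$. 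You explicitly defer this "coordinate bookkeeping", so the proof of the $IT$-exclusion is missing. Finally, your closing remark misdescribes $IT$: its apex vertex has exactly three neighbours, which induce a cyclic triangle — not an independent set of size $4$ — so $IT$-freeness does not follow from the three-tournament partition and must be argued directly, as the paper does.
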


\begin{proof}
Let $G_n$ be the 5-tuple shift-graph with indices in $[n]$, and let $D_n$ be the orientation of $G_n$ where $(a_1,\dots,a_5)(b_1,\dots,b_5)\in A(D)$ if $b_i=a_{i+1}$ for every $i\in [4]$. For every $v=(a_1,\dots,a_5)\in V(G_n)$, define $m(v)=a_3$. Let $X = A(D_n)$. That is, $X$ is the set of edges of the form $(\bullet, b, c, d, e)\rightarrow (b, c, d, e, \bullet)$. Moreover, let $D_n'$ be the digraph with $V(D_n') = V(D_n)$ and $A(D_n') = X\cup Y$ where $Y$ is the set of arcs of the form $(a, b, c, \bullet, \bullet)\rightarrow (\bullet, \bullet, a, b, c)$. Note that as $m$ is strictly increasing along arcs in $X$, it follows that $X$ is acyclic. Likewise, $m$ is strictly decreasing in $Y$, so $Y$ is acyclic.

\sta{\label{counterexample-1}
For every $n\geq 1$, we have $\chi(G_n)/2\leq \vec{\chi}(D_n')$.
}
\begin{subproof}
    We will prove the claim by proving that a set of vertices that induces an acyclic set in $D_n'$ also induces a bipartite subgraph in $G_n$. Let $\Lambda$ be a set of vertices that induces an acyclic set in $D_n'$. Notice that $D_n[\Lambda]$ does not have a directed path of length 3 because if such a path $v_1\rightarrow v_2\rightarrow v_3$ exists, then $v_3v_1\in A(D_n')$ contradicting that $\Lambda$ is an acyclic set in $D_n'$. Thus, by Theorem \ref{gallai-roy}, we have $\chi(G_n[\Lambda])\leq 2$ as desired. 
\end{subproof}

Finally, let $F_n$ be the digraph with $V(F_n)=V(D_n')$ and $A(F_n) = X\cup Y \cup Z_1\cup Z_2$ where we define $Z_1$ and $Z_2$ as follows. Let $<$ be a complete ordering of $V(F_n)$. Define $Z_1$ (resp. $Z_2$) as the set of edges such that $uv\in Z_1$ (resp. $uv\in Z_2$) if $u<v$ and there exists numbers $a, b, c$ (resp. $c, d, e$) such that both $u$ and $v$ are of the form $(a, b, c, \bullet, \bullet)$ (resp. $(\bullet, \bullet, c, d, e$).

\sta{\label{counterexample-2}
If $v\in V(F_n)$, then $N_{F_n}(v)$ can be partitioned into three tournaments.
}
\begin{subproof}
    Fix $v = (a, b, c, d, e)\in V(F_n)$. The neighbours $u$ of $v$ such that $uv\in X$ or $vu\in X$ are of the form $(\bullet, a, b, c, d)$ and $(b, c, d, e, \bullet)$. Vertices of the former type are complete to the vertices of the latter type by edges in $Y$. Thus, vertices adjacent to $v$ via an edge in $X$ form a clique.

    Denote by $A$ and $B$ the neighbours of $v$ of the forms $(\bullet, \bullet, a, b, c)$ and $(c, d, e, \bullet, \bullet)$, respectively. Notice that these sets partition the neighbours of $v$ connected to $v$ via edges in $Y$. Denote by $M$ and $N$ the neighbours of $v$ of the form $(a, b, c, \bullet, \bullet)$  and $(\bullet, \bullet, c, d, e)$, respectively. Notice that these sets partition the neighbours of $v$ connected to $v$ via edges in $Z_1\cup Z_2$. By the definition of edges in $Z_1\cup Z_2$, each of the sets $A, B, M$ and $N$ induce a tournament. Furthermore, $M$ is complete to $A$, and $B$ is complete to $N$ via edges in $Y$. Since $A(F_n) = X\cup Y\cup (Z_1\cup Z_2)$, these are all the neighbours of $v$, thus finishing the proof. Figure \ref{figures-heroes-3} illustrates the neighbourhood of a vertex.
\end{subproof}

\begin{figure}
    \centering
    \includegraphics[width=0.8\textwidth]{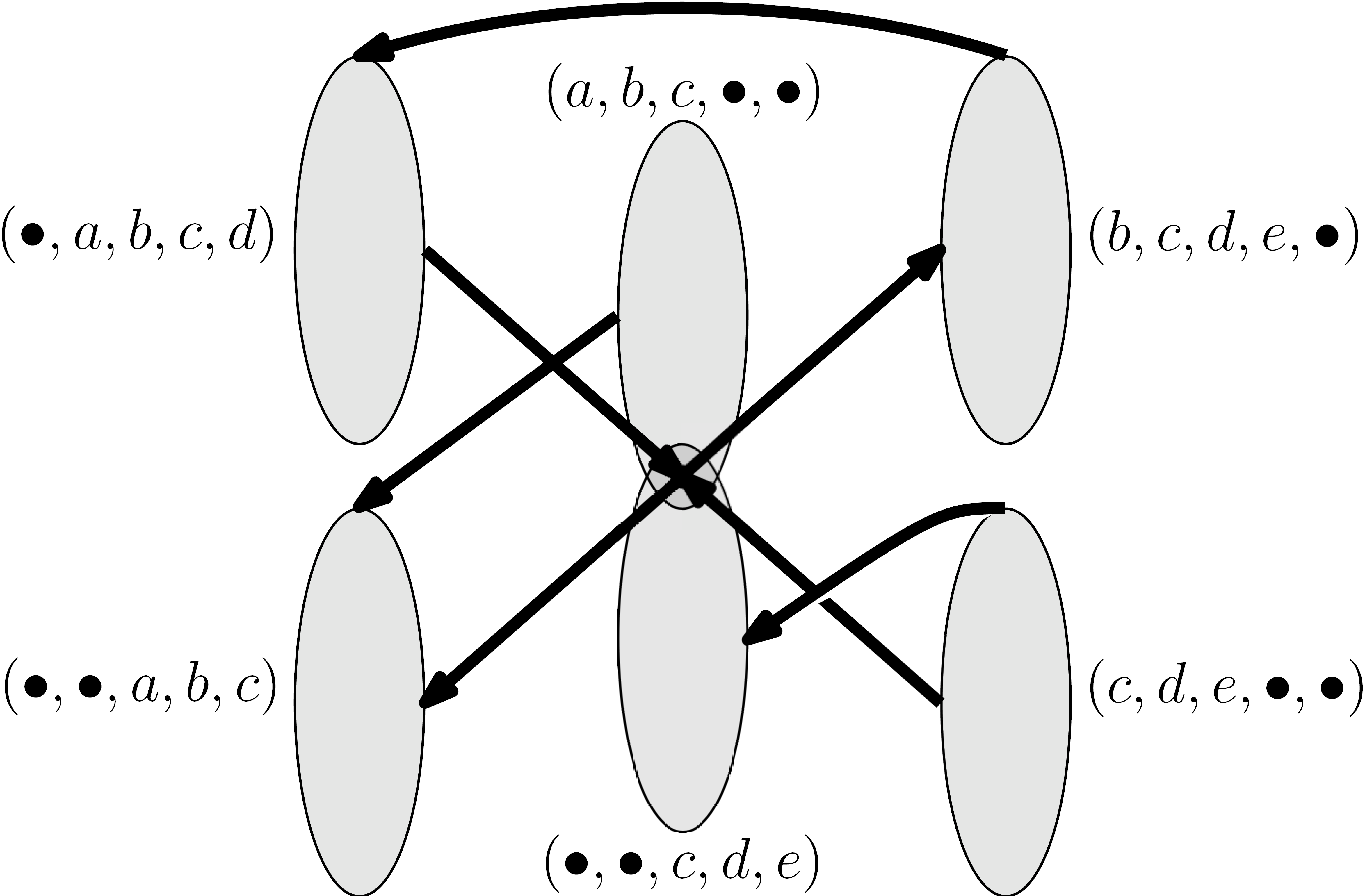}
    \caption{Illustration of the neighbourhood of the vertex $(a, b, c, d, e)$ in $F_n$.}
    \label{figures-heroes-3}
\end{figure}

\sta{\label{counterexample-3}
Every cyclic triangle in $F_n$ has two edges in $X$ and one edge in $Y$.
}
\begin{subproof}
    Let $u, v$, and $w$ be vertices such that $u$ sees $v$, $v$ sees $w$, and $w$ sees $u$. For a contradiction, assume that $uv\in Z_1\cup Z_2$, and set $v = (a, b, c, d, e)$. Since $uv\in Z_1\cup Z_2$, we have $m(u) = c$. If $vw\in X$, then $m(w) = d$. Since $m(u) < m(w)$, it follows that $wu\in Y$, so $m(u) = b$, a contradiction. If $vw\in Z_1\cup Z_2$, then $m(w) = c$, contradicting the fact that edges in $Z_1\cup Z_2$ induce an acyclic graph. Thus, $vw\in Y$, so $m(w) = a$. Since $m(w) < m(u)$, it must be that $wu\in X$, so $m(u)=b$, a contradiction. We conclude that every edge  in the directed triangle is not in $Z_1\cup Z_2$. Since each of $X$ and $Y$ span acyclic graphs, we may assume $uv\in X$ and $vw\in Y$. Consequently, $m(u) = b$ and $m(w) = a$, so $m(w) < m(u)$. This implies that $wu\in X$, proving that directed triangles have two edges in $X$ and one edge in $Y$.
\end{subproof}

The second bullet point is true by (\ref{counterexample-1}). Since $F_n$ contains $D_n'$ as a subgraph, by (\ref{counterexample-2}), we have $\chi(G_n)/2\leq \vec{\chi}(F_n')$ as well. As mentioned before, we have $\chi(G_n)\rightarrow \infty$ as $n\rightarrow \infty$. Thus, $\dichi(F_n)\rightarrow \infty$. This proves the first bullet point. We prove the third bullet point by contradiction. Assume $IT$ is a subgraph of $F_n$. Let $u\rightarrow v\rightarrow w\rightarrow u$ be the directed cycle in $F_n$ and $x$ be the vertex in-complete from $\{u, v, w\}$. Without loss of generality, by (\ref{counterexample-3}), we may assume that $uv, vw\in X$. Set $v = (a, b, c, d, e)$. If $ux\in X$, then $m(x) = c$, and since $m(w) = d > m(x)$, it follows that $wx\in Y$. This implies that $m(x) = b$, a contradiction to $m(x) =c$. If $ux\in Y$, then $m(x)<a$, so $m(x) < m(v)$. It follows that $vx\in Y$, implying $m(x) = a$, a contradiction. Thus, $ux\in Z_1\cup Z_2$, so $m(x) = b$. Since $m(x)<m(v)$, it follows that $vx\in Y$, so $m(x) =a$, a contradiction. This shows that $F_n$ is $IT$-free, which finishes the proof.
\end{proof}

\noindent\textbf{Proof of Theorem \ref{main5}: } Let the out-triangle, denoted by $OT$, be $IT$ with arcs reversed. 

\sta{
$OT$ is not a hero in $ST$-free digraphs.
}
\begin{subproof}
    Let $ST'$ be $ST$ with arcs reversed. By Theorem \ref{main4}, the $\{ST', IT\}$-free digraphs do not have bounded dichromatic number. By reversing arcs, we get that $\{ST, OT\}$-free digraphs do not have bounded dichromatic number. 
\end{subproof}

Assume for a contradiction that there exists a hero $H$ such that $H$ is not acyclic and $H \not = \Delta(1, m, m')$ for integers $m, m'\geq 1$. If $H$ is not strongly connected, then there exists non-empty tournaments $H_1\subseteq H$ and $H_2\subseteq H$ such that $V(H_1)\cap V(H_2)=\emptyset$, $H_1$ is not transitive, and either $H_1$ is out-complete to $H_2$, or $H_2$ is out-complete to $H_1$. Since $H_1$ is not transitive, it contains a directed triangle $T$. If $T$ is out-complete to $H_1$, then $H$ contains a copy of $IT$, a contradiction. Thus, $H_2$ is out-complete to $T$, but then $H$ contains a copy of $OT$, a contradiction. We conclude $H$ is strongly connected.

Since $H$ is strongly connected, by Theorem \ref{k_2_cooperates}, it follows that $H=\Delta (1, m, H')$ or $H=\Delta (1, H', m)$ where $H'$ is a hero in $ST$-free digraphs. It is then enough to prove that $H'$ is acyclic. Suppose not. That is, assume that $H'$ contains a directed triangle $T$. In either case, by the structure of strongly connected heroes, $H$ contains a copy of $IT$, a contradiction. \qed

\section{Localized and colocalized digraphs} \label{sec:local}

In this section, we introduce the concept of localized and colocalized digraphs and how these conditions relate to Theorem \ref{main2}. To elaborate, we need some definitions.

In a digraph $D$, we say the \textit{out-neighbourhood} (resp. \textit{in-neighbourhood}) of a set of vertices $S\subseteq V(D)$, denoted by $N^+(S)$ (resp. $N^-(S)$), is the set of vertices not in $S$ that vertices $v\in S$ see (resp. $v\in S$ is seen by). The neighbourhood of $S$ is $N(S):=N^+(S)\cup N^-(S)$.  When $S=\{v\}$, we use $N(v), N^+(v)$, and $N^-(v)$ to denote $N(S), N^+(S)$, and $N^-(S)$ respectively. Similarly, the \textit{non-neighbourhood} of a set $S$, denoted $N^{0}(S)$, is the set of vertices not in $S$ or the neighbourhood of $S$. If $S = \{v\}$, we let $N^0(v)$ be the set of non-neighbours of $v$. So we have:
\begin{align*}
    N^+(S)&= \bigcup_{s \in S} N^+(s) \setminus S;\\
    N^-(S)&= \bigcup_{s \in S} N^-(s) \setminus S;\\
    N^0(S)&= \bigcap_{s \in S} N^0(s) \setminus S.\\
\end{align*}

A digraph $D$ is \textit{$k$-local} if, for every $v\in V(D)$, we have $\dichi(N^+(v))\leq k$. Furthermore, it is $k$\textit{-colocal} if, for every $v\in V(D)$, we have $\dichi(N^-(v))\leq k$. The concept of $k$-local digraphs was introduced by Harutyunyan, Le, Newman, and Thomassé \cite{dense-digraphs}. A digraph $F$ \textit{cooperates} if $H$ is a hero in $F$-free digraphs when one of the following three conditions hold:
\begin{itemize}
    \item $H=K_1$;
    \item $H=H_1\Rightarrow H_2$ where $H_1$ and $H_2$ are heroes in $F$-free digraphs; or
    \item $H=\Delta(1, 1, H_1)$ where $H_1$ is a hero in $F$-free digraphs.
\end{itemize}
In other words, a digraph $F$ cooperates when their heroes can be used to construct bigger heroes by using the operations described above. Notice that Theorem \ref{complete-multipartite-main} is equivalent to proving $K_1+K_2$ cooperates, and Theorem \ref{main2} is equivalent to proving that $rK_1+\vec{P_{3}}$ cooperates for every $r\geq 1$. Of the three conditions above, the first is true for every $F$. For the second, we require a result of Aboulker, Aubian and Charbit:

\begin{thm}[Aboulker, Aubian, and Charbit \cite{complete-multipartite}]
\label{cmp-mult}
    Let $H_1$, $H_2$ and $F$ be digraphs such that $H_1\Rightarrow H_2$ is a hero in $F$-free digraphs, and $H_1$ and $H_2$ are heroes in $\{K_1+F\}$-free digraphs. Then $H_1\Rightarrow H_2$ is a hero in $\{K_1+F\}$-free digraphs.
\end{thm}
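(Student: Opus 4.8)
The plan is to prove the following self-strengthening by induction: whenever $H_1,H_2,F$ satisfy the three hypotheses, some constant $c=c(H_1,H_2,F)$ bounds $\dichi(D)$ for every $\{H_1\Rightarrow H_2,\,K_1+F\}$-free digraph $D$. We may assume $|V(F)|\ge 2$, since $F=K_1$ gives exactly the $\Rightarrow$-clause of Theorem~\ref{k_2_cooperates}. Decompose each of $H_1,H_2$ into $\Rightarrow$-prime pieces (a digraph is $\Rightarrow$-prime if it cannot be written as $A\Rightarrow B$ with $A,B$ both nonempty) and concatenate, so $H_1\Rightarrow H_2=G_1\Rightarrow\cdots\Rightarrow G_m$ with each $G_i$ $\Rightarrow$-prime and an induced subdigraph of $H_1$ or of $H_2$; induct on $m$. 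Because $H_1,H_2$ are heroes in $\{K_1+F\}$-free digraphs and tournaments are $\{K_1+F\}$-free, they are heroes in tournaments, so by Theorem~\ref{k_2_cooperates} each $G_i$ is $K_1$ or of the form $\Delta(1,\cdot,\cdot)$; moreover every interval $G_a\Rightarrow\cdots\Rightarrow G_b$ lying inside $H_1$ or inside $H_2$ is an induced subdigraph of a hero in $\{K_1+F\}$-free digraphs, hence is itself one, and every interval that straddles the $H_1$--$H_2$ boundary satisfies the three hypotheses and has fewer than $m$ prime factors, so is a hero in $\{K_1+F\}$-free digraphs by the induction hypothesis. Let $c_0,c_1,c_2$ witness the three hypotheses.

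The backbone is a reduction to neighbourhoods. Given such a $D$, pick any vertex $v$ and partition $V(D)=\{v\}\cup N^+(v)\cup N^-(v)\cup N^0(v)$. As $D$ is $\{K_1+F\}$-free, $D[N^0(v)]$ contains no copy of $F$ (such a copy together with $v$ would be an induced $K_1+F$), so $D[N^0(v)]$ is $\{H_1\Rightarrow H_2, F\}$-free and $\dichi(D[N^0(v)])\le c_0$. Since $\dichi(D)\le 1+\dichi(D[N^+(v)])+\dichi(D[N^-(v)])+\dichi(D[N^0(v)])$, it suffices to bound $\dichi(D[N^+(v)])$ and $\dichi(D[N^-(v)])$.

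The out-neighbourhood is controlled when $G_1=K_1$, i.e.\ when $H_1\Rightarrow H_2$ has a dominating source $x$. Since $v$ is out-complete to $N^+(v)$, any induced copy of $Y:=(H_1\Rightarrow H_2)-x=G_2\Rightarrow\cdots\Rightarrow G_m$ inside $N^+(v)$ would, together with $v$, induce a copy of $K_1\Rightarrow Y\cong H_1\Rightarrow H_2$ in $D$; so $D[N^+(v)]$ is $Y$-free. If $m\ge 3$, splitting $Y$ into two nonempty $\Rightarrow$-intervals $Y=H_1'\Rightarrow H_2'$ keeps the three hypotheses (the intervals are heroes in $\{K_1+F\}$-free digraphs by the observations in the first paragraph, and $Y$, being an induced subdigraph of $H_1\Rightarrow H_2$, is a hero in $F$-free digraphs), so $\dichi(D[N^+(v)])$ is bounded by the induction hypothesis applied to $Y$ (which has $m-1$ prime factors); if $m=2$ then $Y=H_2=G_2$ is $\Rightarrow$-prime and $D[N^+(v)]$ is $H_2$-free, giving $\dichi(D[N^+(v)])\le c_2$. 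Dually, when $G_m=K_1$ the same argument applied to $N^-(v)$ (using that $v$ is in-complete from $N^-(v)$) bounds $\dichi(D[N^-(v)])$. Hence if $G_1=G_m=K_1$ the induction closes; the base case $m=2$ with $G_1=G_2=K_1$ is $H_1\Rightarrow H_2=TT_2$, where $\dichi\le 1$.

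The main obstacle is the remaining case: $G_1\ne K_1$ (or, dually, $G_m\ne K_1$), so $G_1=\Delta(1,P,Q)$ where, by Theorem~\ref{k_2_cooperates}, one of $P,Q$ is a transitive tournament. Now $H_1\Rightarrow H_2$ has no source, ``$v$ out-complete to $N^+(v)$'' buys nothing, and there is no evident way to peel either neighbourhood. I expect this to be the crux. A plausible route is to delete the apex $z$ of $G_1$: then $(H_1\Rightarrow H_2)-z=P\Rightarrow Q\Rightarrow G_2\Rightarrow\cdots\Rightarrow G_m$ does have a dominating source when $P$ is a transitive tournament (and a dominating sink when $Q$ is). Since $D$ is only known to be $(H_1\Rightarrow H_2)$-free and not $((H_1\Rightarrow H_2)-z)$-free, the work is to show that a $\{H_1\Rightarrow H_2,\,K_1+F\}$-free digraph of large dichromatic number nonetheless contains an induced $(H_1\Rightarrow H_2)-z$ equipped with a vertex that can serve as the apex $z$; I would try to extract such an apex by a pigeonhole/Ramsey argument over the boundedly many adjacency patterns a candidate apex can have to a fixed copy of $(H_1\Rightarrow H_2)-z$, combined with the neighbourhood bounds already available for vertices lying inside the transitive factor of $G_1$. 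Making this precise --- reconciling the source/sink-peeling of the $K_1$-factor case with the strongly connected $\Delta$-factors --- is the step I am least confident admits a short argument.
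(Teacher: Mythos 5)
This statement is cited from Aboulker, Aubian, and Charbit and the paper does not give its own proof, so there is nothing in the paper to compare your argument against; I can only assess the argument on its own terms.

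The framework you set up is sensible as far as it goes: $\{v\}\cup N^+(v)\cup N^-(v)\cup N^0(v)$ partitions $V(D)$, $\dichi$ is subadditive over vertex partitions, $N^0(v)$ is $F$-free because $D$ is $\{K_1+F\}$-free, and when $H_1\Rightarrow H_2$ has a dominating source $x$ the set $N^+(v)$ is $((H_1\Rightarrow H_2)-x)$-free. But the argument only closes when \emph{both} extreme $\Rightarrow$-prime factors $G_1,G_m$ are $K_1$, and you yourself flag the remaining case as the crux. That remaining case is not a corner case --- it is where all the content of the theorem lives. Already $H_1=\vec{C_3}$, $H_2=K_1$ (so $H_1\Rightarrow H_2=IT$, the in-triangle) has $G_1=\vec{C_3}\neq K_1$, and your source-peeling gives you nothing about $D[N^+(v)]$: $v$ being out-complete to $N^+(v)$ cannot be matched against any sub-structure of $G_1$ because $\Delta(1,P,Q)$ has no dominating source. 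The sketched remedy --- delete the apex $z$ of $G_1$ and then ``Ramsey-extract'' a vertex to serve as $z$ --- is not an argument as written, and I do not see how it fits the single-vertex-peeling framework, since a vertex playing a $\Delta$-internal role would have to be controlled simultaneously via both its out- and in-neighbourhood, whereas your reduction only lets you exploit one side at a time. As it stands this is an incomplete proof with a substantial missing case, not a proof with a small unverified detail.
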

Theorem \ref{cmp-mult} shows that if the second condition above holds for $F$, then it also holds for $K_1+F$. Therefore, the main goal of this section is to develop sufficient conditions for ``lifting'' the third condition from $F$ to $K_1+F$. 

We require the following technical definitions. We say a digraph $F$ is \textit{localized} (resp. \textit{colocalized}) if for every $r\geq 1$, the following two:\begin{itemize}
    \item $\Delta (1, 1, H)$ is a hero in $\{(r-1)K_1+F\}$-free digraphs; and
    \item $H$ is a hero in $\{rK_1+F\}$-free digraphs
\end{itemize}
imply that, for every fixed $k\geq 1$, $\Delta (1, 1, H)$ is a hero in $k$-local (resp. colocal) $\{rK_1+F\}$-free digraphs. These definitions are meant to describe the properties a digraph $F$ needs to have for the proof strategy of Theorem \ref{dense-digraphs-main} to apply to $F$, where in Theorem \ref{dense-digraphs-main}, $F=K_1$. 

The first step to proving Theorem \ref{main2} is proving the following.

\begin{thm}\label{main1}
    Let $F$ be a localized and colocalized digraph. If $F$ cooperates, then $rK_1+F$ cooperates for every $r\geq 0$.
\end{thm}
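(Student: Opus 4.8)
The plan is to verify each of the three defining conditions of ``$rK_1 + F$ cooperates'' in turn, proceeding by induction on $r$, with the case $r = 0$ being exactly the hypothesis that $F$ cooperates. So fix $r \geq 1$ and assume inductively that $(r-1)K_1 + F$ cooperates; write $F' := rK_1 + F$ for brevity. The first condition, $H = K_1$, is trivially a hero in $F'$-free digraphs, so nothing is needed there. For the second condition, $H = H_1 \Rightarrow H_2$ with $H_1, H_2$ heroes in $F'$-free digraphs: I would invoke Theorem \ref{cmp-mult} with $F$ replaced by $(r-1)K_1 + F$, since $rK_1 + F = K_1 + ((r-1)K_1 + F)$. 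To apply it I need that $H_1 \Rightarrow H_2$ is a hero in $\{(r-1)K_1 + F\}$-free digraphs; this follows because $H_1$ and $H_2$ are heroes in $F'$-free digraphs (hence in the superset-closure sense also in $\{(r-1)K_1+F\}$-free digraphs, as $F'$-free is a larger class — wait, one must be careful here: being a hero in a \emph{smaller} forbidden set is the stronger statement, so I actually want the reverse). The correct route: since $(r-1)K_1+F$ cooperates, and $H_1, H_2$ are heroes in $F'$-free digraphs, I first need to upgrade $H_1, H_2$ to heroes in $\{(r-1)K_1+F\}$-free digraphs — this is where the structure of ``cooperates'' and an induction on the construction tree of $H_1, H_2$ via Theorem \ref{k_2_cooperates} enters. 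Then Theorem \ref{cmp-mult} gives $H_1 \Rightarrow H_2$ as a hero in $\{K_1 + ((r-1)K_1+F)\}$-free $= \{rK_1+F\}$-free digraphs, as desired.

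The heart of the proof is the third condition: $H = \Delta(1,1,H_1)$ where $H_1$ is a hero in $F'$-free digraphs, and we must show $\Delta(1,1,H_1)$ is a hero in $F'$-free digraphs. This is precisely where localization and colocalization are designed to be used, following the Harutyunyan--Le--Newman--Thomass\'e template for Theorem \ref{dense-digraphs-main}. First, by the inductive hypothesis applied appropriately, $\Delta(1,1,H_1)$ is a hero in $\{(r-1)K_1+F\}$-free digraphs (using that $H_1$ is a hero there too, and that $(r-1)K_1+F$ cooperates). Now the two bullet conditions in the definition of \emph{localized} are met: $\Delta(1,1,H_1)$ is a hero in $\{(r-1)K_1+F\}$-free digraphs, and $H_1$ is a hero in $\{rK_1+F\}$-free digraphs. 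Since $F$ is localized, we conclude that for every fixed $k$, $\Delta(1,1,H_1)$ is a hero in $k$-local $\{rK_1+F\}$-free digraphs; since $F$ is colocalized, likewise in $k$-colocal $\{rK_1+F\}$-free digraphs. The remaining work is to bootstrap from ``bounded on $k$-local and on $k$-colocal $F'$-free digraphs'' to ``bounded on all $F'$-free digraphs.''

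For that bootstrapping step I would reproduce the Harutyunyan--Le--Newman--Thomass\'e argument: given an $F'$-free digraph $D$ of large dichromatic number, one wants to find inside it either a vertex with out-neighbourhood of large dichromatic number or a vertex with in-neighbourhood of large dichromatic number, reducing to a smaller instance; iterating, one either certifies $D$ is (essentially) $k$-local or $k$-colocal after removing a bounded-dichromatic-number part, or one peels off structure and recurses. Concretely, if $\dichi(D) > f(k) + g(k)$ where $f$ bounds the dichromatic number on $k$-local $F'$-free digraphs and $g$ on $k$-colocal ones, then $D$ has a vertex $v$ with $\dichi(N^+(v)) > k$ or $\dichi(N^-(v)) > k$; in the former case, $D[N^+(v)]$ is again $F'$-free (being an induced subdigraph) with strictly fewer... no — here the key point exploited in \cite{dense-digraphs} is that adding $v$ as a source dominating $N^+(v)$ lets one forbid a smaller forest, namely $(r-1)K_1 + F$ inside $N^+(v)$, because any copy of $rK_1+F$ in $N^+(v)$ together with $v$ as an isolated... this is exactly the role of the ``$r$ versus $r-1$'' shift and is what localized/colocalized abstract away. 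I expect the main obstacle to be formalizing this peeling/recursion cleanly so that the two localization hypotheses combine without circularity: one must be careful that the quantities $k$ chosen are fixed before the recursion and that the bound obtained is genuinely finite (a tower-type function of $r$, $H_1$, and $F$), and that the induced-subdigraph-is-still-$F'$-free observation interacts correctly with the drop from $rK_1+F$ to $(r-1)K_1+F$ in neighbourhoods of a dominating vertex. Once this recursion terminates with a finite bound, all three conditions are verified and $rK_1 + F$ cooperates, completing the induction. $\qed$
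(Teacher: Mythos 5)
Your high-level strategy matches the paper's: reduce to $r=1$ (the paper does this via Lemma~\ref{local}, you via explicit induction on $r$, but these are the same thing), and then verify the three clauses of ``cooperates.'' Your handling of the first clause is fine. Your handling of the second clause reaches the right conclusion but the narration is tangled: the ``wait'' and the claim that you must ``upgrade $H_1, H_2$ to heroes in $\{(r-1)K_1+F\}$-free digraphs'' are both unnecessary and backwards --- since $\{(r-1)K_1+F\}$-free digraphs form a \emph{subclass} of $\{rK_1+F\}$-free digraphs, $H_1$ and $H_2$ being heroes in the latter \emph{automatically} makes them heroes in the former (this is the easy direction, a downgrade, not an upgrade); you then just apply cooperation of $(r-1)K_1+F$ to get $H_1\Rightarrow H_2$ is a hero in $\{(r-1)K_1+F\}$-free digraphs, and Theorem~\ref{cmp-mult} finishes. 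No induction on construction trees of $H_1, H_2$ via Theorem~\ref{k_2_cooperates} is needed or used.

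The real problem is the third clause, which is the entire technical content of the theorem, and which you do not prove. You correctly identify that the definitions of localized and colocalized hand you ``$\Delta(1,1,H_1)$ is a hero in $k$-local $F'$-free digraphs and in $k$-colocal $F'$-free digraphs'' for any fixed $k$, and that the remaining work is to pass from this to a bound on \emph{all} $F'$-free digraphs. But your description of how to do that (``find a vertex with large out- or in-neighbourhood dichromatic number and peel/recursе, exploiting the $r\to r-1$ drop in neighbourhoods'') is not an argument, and it also mischaracterizes what the localization hypotheses are being used for: the $r\to r-1$ drop is already baked into the definition of ``localized'' and has been spent by the time you have the $k$-local/$k$-colocal bounds. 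What remains is a purely structural decomposition of a $\{\Delta(1,1,H_1), K_1+F\}$-free digraph that has nothing further to do with $F$. The paper does this with the bag-chain machinery: one shows (i) absence of a $(c,\beta)$-bag-chain of length $2$ forces every vertex to be $\beta$-red or $\beta$-blue, which partitions $D$ into a $\beta$-local part and a $\beta$-colocal part, giving a bound via the localization hypotheses; (ii) iterating, absence of a length-$8$ bag-chain gives a bound; (iii) a maximal bag-chain $B_1,\dots,B_t$ itself has bounded dichromatic number via Lemma~\ref{partition-dichi}; and (iv) the leftover vertices are partitioned into zones whose interaction with the chain and each other is constrained enough that each zone cannot host a length-$8$ bag-chain (else the maximal chain could be lengthened), so zones have bounded dichromatic number by (ii), and three residue classes of zones cover the rest. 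None of this is in your proposal, and you explicitly flag that you do not know how to close the recursion; so the proposal has a genuine gap at its core step.
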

Aboulker, Aubian, and Charbit \cite{complete-multipartite}, in Question 5.4, ask the following. If $H$ is a hero in $\{K_1+F\}$-free digraphs and $\Delta(1, 1, H)$ is a hero in $F$-free digraphs, does it follow that $\Delta(1, 1, H)$ is a hero in $\{K_1+F\}$-free digraphs? Theorem \ref{main1} proves that the question has an affirmative answer if $F$ is localized and colocalized. 

The following lemma simplifies using Theorem \ref{main1} for certain cases.

\begin{lemma}\label{fix}
    Let $F'$ be a digraph isomorphic to $F$ with every arc reversed. If $F$ is localized, then $F'$ is colocalized.
\end{lemma}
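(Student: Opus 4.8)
The plan is to unwind the definitions of \emph{localized} and \emph{colocalized} and observe that the hypotheses and conclusions swap appropriately under arc-reversal. The key observation is that reversing all arcs of a digraph $D$ is an involution that sends directed cycles to directed cycles (with reversed orientation), hence preserves acyclicity; consequently it preserves the dichromatic number, and it sends $N^+(v)$ in $D$ to $N^-(v)$ in the reverse $\overleftarrow{D}$. In particular, a digraph $D$ is $k$-local if and only if $\overleftarrow{D}$ is $k$-colocal. Moreover, for any digraph $G$, the reverse $\overleftarrow{G}$ is $H$-free if and only if $G$ is $\overleftarrow{H}$-free, so $H$ is a hero in $\mathcal{G}$-free digraphs if and only if $\overleftarrow{H}$ is a hero in $\overleftarrow{\mathcal{G}}$-free digraphs (where $\overleftarrow{\mathcal{G}} = \{\overleftarrow{G} : G \in \mathcal{G}\}$). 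I will also use that $\Delta(1,1,H)$ reversed is isomorphic to $\Delta(1,1,\overleftarrow{H})$: reversing all arcs of $\Delta(D_1, D_2, D_3)$ gives $\Delta(\overleftarrow{D_3}, \overleftarrow{D_2}, \overleftarrow{D_1})$ by reversing the cyclic order of the parts, and when $D_1 = D_2 = TT_1 = K_1$ this is $\Delta(\overleftarrow{H}, 1, 1) \cong \Delta(1,1,\overleftarrow{H})$ since $\Delta$ is invariant under cyclic rotation of its arguments. (Also note $(r-1)K_1 + F$ reversed is $(r-1)K_1 + \overleftarrow{F}$, and $rK_1 + F$ reversed is $rK_1 + \overleftarrow{F}$, since $K_1$ is self-reverse.)

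With these translations in hand the proof is a direct substitution. Suppose $F$ is localized; we must show $F' := \overleftarrow{F}$ is colocalized. Fix $r \geq 1$ and a digraph $H$, and assume the two hypotheses in the definition of colocalized for $F'$: that $\Delta(1,1,H)$ is a hero in $\{(r-1)K_1 + F'\}$-free digraphs, and that $H$ is a hero in $\{rK_1 + F'\}$-free digraphs. Applying arc-reversal to both statements via the translation above, $\Delta(1,1,\overleftarrow{H})$ is a hero in $\{(r-1)K_1 + F\}$-free digraphs and $\overleftarrow{H}$ is a hero in $\{rK_1 + F\}$-free digraphs. These are exactly the two hypotheses in the definition of \emph{localized} applied to $F$ with the digraph $\overleftarrow{H}$ in the role of $H$; since $F$ is localized, we conclude that for every fixed $k \geq 1$, $\Delta(1,1,\overleftarrow{H})$ is a hero in $k$-local $\{rK_1 + F\}$-free digraphs. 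Reversing arcs once more and using that $k$-local becomes $k$-colocal, $k$-colocal becomes $k$-local, and $\Delta(1,1,\overleftarrow{H})$ reversed is $\Delta(1,1,H)$, we obtain that $\Delta(1,1,H)$ is a hero in $k$-colocal $\{rK_1 + F'\}$-free digraphs, for every fixed $k \geq 1$. Since $r$ and $H$ were arbitrary, $F'$ is colocalized.

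I do not anticipate a genuine obstacle here — the lemma is essentially a bookkeeping statement about the reversal involution. The only point requiring a little care is verifying that $\Delta(1,1,H)$ reversed is isomorphic to $\Delta(1,1,\overleftarrow{H})$ rather than to something with the parts in the wrong order; this comes down to the fact that $\Delta(D_1,D_2,D_3) \cong \Delta(D_2,D_3,D_1)$ (cyclic symmetry of the construction) together with the fact that reversing arcs reverses the cyclic order of the three parts. Once that isomorphism is pinned down, the rest is a mechanical application of the two stated equivalences ("reversal preserves dichromatic number" and "reversal conjugates $H$-freeness to $\overleftarrow{H}$-freeness") to each clause of the definition. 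It may be worth stating those two equivalences as a small preliminary observation before the main argument, so the substitution step reads cleanly.
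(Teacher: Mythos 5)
Your proposal is correct and takes essentially the same approach as the paper: reverse all arcs, observe that this swaps local with colocal and conjugates $H$-freeness to $\overleftarrow{H}$-freeness while preserving dichromatic number, and then substitute through the definition of localized applied to $\overleftarrow{H}$. The paper phrases the argument at the level of a specific digraph $D$ and its reversal $D'$ rather than at the level of ``hero in $\dots$'' statements, but the content is identical, including the observation (used implicitly there) that $\Delta(1,1,H)$ reversed is $\Delta(1,1,\overleftarrow{H})$.
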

\begin{proof}
    Assume that $\Delta(1, 1, H)$ is a hero in $\{(r-1)K_1+F'\}$-free digraphs, and assume that $H$ is a hero in $\{rK_1+F'\}$-free digraphs. Let $H'$ denote the digraph obtained from $H$ by reversing all arcs. It follows that $\Delta(1, 1, H')$ is a hero in $\{(r-1)K_1+F\}$-free digraphs, and that $H'$ is a hero in $\{rK_1+F\}$-free digraphs.
    
    Fix $k\geq 1$. We want to prove that $\Delta(1, 1, H)$ is a hero in $k$-colocal $\{rK_1+F'\}$-free digraphs. Let $D$ be a $k$-colocal $\{rK_1+F', \Delta(1, 1, H)\}$-free digraph. Let $D'$ be $D$ with every arc reversed, and notice that $D'$ is $k$-local  and $\{rK_1+F, \Delta(1, 1, H')\}$-free. Since $F$ is localized, $\Delta(1, 1, H')$ is a hero in $k$-local $\{rK_1+F \}$-free digraphs. Thus, there exists an integer $c$ (depending only on $r, k, H, F$) such that $\dichi(D')\leq c$ and hence $\dichi(D)\leq c$. Therefore, $\Delta(1, 1, H)$ is a hero in $\{rK_1+F'\}$-free digraphs. This finishes the proof. 
\end{proof}

In this section, we prove Theorem \ref{main1}. The following lemma reduces the task of proving Theorem \ref{main1} to proving the case where $r = 1$.

\begin{lemma}\label{local}
    If $F$ is localized, then $K_1+F$ is localized. Similarly, if $F$ is colocalized, then $K_1+F$ is colocalized.
\end{lemma}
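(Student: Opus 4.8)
\textbf{Proof proposal for Lemma~\ref{local}.}

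The plan is to unfold the definition of \emph{localized} for $K_1+F$ and reduce it to the definition for $F$, using a standard trick that identifies a $k$-local $\{rK_1+(K_1+F)\}$-free digraph with a $k$-local $\{(r+1)K_1+F\}$-free digraph. I will only write the ``localized'' half; the ``colocalized'' half is identical after reversing all arcs (or one can simply invoke Lemma~\ref{fix} together with the localized half, since reversing $K_1+F$ gives $K_1+F'$).

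First I would fix $r\geq 1$ and suppose the two hypotheses in the definition of localized hold for $K_1+F$: namely that $\Delta(1,1,H)$ is a hero in $\{(r-1)K_1+(K_1+F)\}$-free digraphs and that $H$ is a hero in $\{rK_1+(K_1+F)\}$-free digraphs. The key observation is the associativity/commutativity of disjoint union: $(r-1)K_1+(K_1+F) = rK_1+F$ and $rK_1+(K_1+F) = (r+1)K_1+F$. Hence these hypotheses say precisely that $\Delta(1,1,H)$ is a hero in $\{rK_1+F\}$-free digraphs and $H$ is a hero in $\{(r+1)K_1+F\}$-free digraphs. These are exactly the two hypotheses appearing in the definition of ``$F$ is localized'' applied with the parameter $r+1$ in place of $r$. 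Since $F$ is localized, we conclude that for every fixed $k\geq 1$, $\Delta(1,1,H)$ is a hero in $k$-local $\{(r+1)K_1+F\}$-free digraphs. Rewriting $(r+1)K_1+F = rK_1+(K_1+F)$ once more, this is exactly the statement that $\Delta(1,1,H)$ is a hero in $k$-local $\{rK_1+(K_1+F)\}$-free digraphs, which is the desired conclusion. This establishes that $K_1+F$ is localized.

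For the colocalized statement, let $D$ be any $k$-colocal $\{rK_1+(K_1+F)\}$-free digraph witnessing the relevant instance, pass to $D'$ obtained by reversing all arcs: then $D'$ is $k$-local and $\{rK_1+(K_1+F')\}$-free where $F'$ is $F$ with arcs reversed, and $K_1+F$ reversed is $K_1+F'$. Applying Lemma~\ref{fix} (if $F$ is colocalized then $F'$ is localized) together with the localized half just proved gives that $K_1+F'$ is localized, and a second application of Lemma~\ref{fix} gives that $K_1+F$ is colocalized. Alternatively one repeats the direct argument of the previous paragraph verbatim with ``local'' replaced by ``colocal'' and ``localized'' by ``colocalized'' throughout.

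I do not expect any genuine obstacle here: the entire content is bookkeeping with the indices $r$, $r-1$, $r+1$ and the identity $jK_1+(K_1+F)=(j+1)K_1+F$. The only point requiring a sentence of care is making sure the constant $c$ bounding the dichromatic number is allowed to depend on $k$, $r$, $H$, and $F$ (it is, since ``hero'' only requires a bound depending on the forbidden family and $k$), so that shifting $r$ to $r+1$ does not cause trouble. $\blacksquare$
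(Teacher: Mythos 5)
Your proof is correct and matches the paper's approach, which simply states the result is ``immediate from the definition'' of (co)localized graphs; you have correctly unwound what that means. The entire argument is the observation that $(r-1)K_1+(K_1+F)=rK_1+F$ and $rK_1+(K_1+F)=(r+1)K_1+F$, so the hypotheses and conclusion in the definition of ``localized'' for $K_1+F$ at parameter $r$ are literally the hypotheses and conclusion in the definition for $F$ at parameter $r+1$, which is legal since the definition quantifies over every $r\geq 1$. One small caution on your colocalized paragraph: Lemma~\ref{fix} as stated gives ``if $F$ is localized then $F'$ is colocalized,'' and substituting $F'$ for $F$ gives ``if $F'$ is localized then $F$ is colocalized''; neither of these is the converse direction ``if $F$ is colocalized then $F'$ is localized'' that your first chain of reductions invokes (that converse is true and provable by the mirror-image of the proof of Lemma~\ref{fix}, but it is not what is recorded there). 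The alternative you offer — repeating the direct re-indexing argument verbatim with ``local''/``localized'' replaced by ``colocal''/``colocalized'' — is clean and correct, and is the route to prefer.
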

\begin{proof}
This is immediate from the definition of (co)localized graphs. 
    \end{proof}

 We will use the proof strategy devised by Harutyunyan, Le, Newman, and Thomassé \cite{dense-digraphs} to prove that if $H$ is a hero in tournaments, then $\Delta(1, m, H)$, where $m\geq 1$, is a hero in $rK_1$-free digraphs, for $r\geq 2$. 

Their strategy relies on the analysis of bag chains. A $\beta$\textit{-bag} is a subset $B$ of $V(D)$ such that $\Vec{\chi}(B) = \beta$, and a $(c, \beta)$\textit{-bag-chain} is a sequence of $\beta$-bags $B_1, \dots, B_t$ such that for every $1\leq i\leq t$ and $v\in B_i$, we have:
\begin{itemize}
    \item $\Vec{\chi}(N^+(v)\cap B_{i-1})\leq c$, and
    \item $\Vec{\chi}(N^-(v)\cap B_{i+1})\leq c$.
\end{itemize}
The \textit{length} of the $(c, \beta)$-bag-chain is $t$.

As a brief outline for the upcoming proof, for a $\{\Delta(1, 1, H), rK_1+F\}$-free digraph $D$, we want to prove the following:
\begin{enumerate}
    \item For some choice of an integer $c$, and for every $\beta\in \mathbb{N}$, the absence of a $(c, \beta)$-bag-chain of length 8 implies that the digraph has bounded dichromatic number.

    \item For some choice of $c$ and $\beta'$, $(c, \beta')$-bag-chains have a bounded dichromatic number.

    \item For some choice of $c$ and $\beta'$, if there is a $(c, \beta')$-bag-chain, then vertices not in a maximal $(c, \beta')$-bag-chain have bounded dichromatic number as well.
\end{enumerate}

We will need the following Lemma:

\begin{lemma}[Aboulker, Aubian, and Charbit \cite{complete-multipartite}]
\label{not-ours}
    Let $D$ be a digraph and let $(X_1, \dots, X_n)$ be a partition of $V(D)$. Suppose that $k$ is an integer such that:
    \begin{itemize}
        \item for every $1\leq i\leq n$, we have $\Vec{\chi}(X_i)\leq k$, and

        \item for every $1\leq i<j\leq n$, if there is an arc $uv$ with $u\in X_j$ and $v\in X_i$, then $\Vec{\chi}(X_{i+1}\cup \cdots X_{j})\leq k$.
    \end{itemize}
    Then $\Vec{\chi}(D)\leq 2k$.
\end{lemma}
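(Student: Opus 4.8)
The plan is to prove Lemma~\ref{not-ours} by a direct two-colouring argument: produce two dicolourings, each using $k$ colours, so that no monochromatic directed cycle survives when we combine them into a single colouring on $2k$ colours (formally, we colour $D$ by the pair, i.e. with colour set $[k]\times\{1,2\}$). The first colouring is the ``local'' one: since each $\Vec{\chi}(X_i)\le k$, fix a $k$-dicolouring $f_i$ of $D[X_i]$ and let $g(v)=f_i(v)$ for $v\in X_i$. The second colouring is the ``coarse'' one: I want to partition the index set $\{1,\dots,n\}$ into intervals so that each interval, taken as a union of the corresponding $X_i$'s, has $\Vec\chi$ at most $k$, and then assign a $k$-dicolouring within each interval-block. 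The key observation making this possible is the second hypothesis: if an arc ever goes ``backwards'' from $X_j$ to $X_i$ with $i<j$, then the whole stretch $X_{i+1}\cup\cdots\cup X_j$ already has $\Vec\chi\le k$.

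The concrete construction of the coarse colouring is the heart of the argument. Process the indices greedily from left to right, forming maximal blocks $I_1,I_2,\dots$ where $I_\ell$ is an interval $\{a_\ell,\dots,b_\ell\}$ of consecutive indices chosen as large as possible subject to $\Vec\chi\big(\bigcup_{i\in I_\ell}X_i\big)\le k$. Set $h(v)$ to be a $k$-dicolouring of $D\big[\bigcup_{i\in I_\ell}X_i\big]$ for $v$ in block $I_\ell$. I then claim there is no directed cycle of $D$ that is monochromatic for both $g$ and $h$. Suppose $C$ is such a cycle. If $V(C)$ is contained in a single $X_i$, then $C$ is monochromatic for $g=f_i$, contradicting that $f_i$ is a dicolouring; if $V(C)$ is contained in a single block $\bigcup_{i\in I_\ell}X_i$, then $C$ is monochromatic for $h$, again a contradiction. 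So $C$ must leave some block, hence it contains an arc $uv$ with $u\in X_j$, $v\in X_i$ and $i<j$ where $i,j$ lie in different blocks (here I use that an arc inside a block is handled, and an arc with $i<j$ between consecutive blocks going forward is fine, so the problematic arc is one going backward across a block boundary, or more carefully: since $C$ is a cycle it must at some point step from a higher index to a lower index, giving an arc $uv$ with $u\in X_j$, $v\in X_i$, $i<j$). By hypothesis $\Vec\chi(X_{i+1}\cup\cdots\cup X_j)\le k$. The point is that this stretch $\{i+1,\dots,j\}$ could then have been absorbed into one block when the greedy process reached index $i+1$ — this is exactly where I need to check that maximality of the greedy blocks forces $i+1$ and $j$ (and everything between) to lie in the same block, so $u$ and $v$'s indices are ``close'' in a controlled sense.

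Let me reconsider, because the greedy blocks are not automatically compatible with this: the backward arc could jump from block $I_{\ell'}$ all the way back into block $I_\ell$ with $\ell<\ell'$. The cleaner route, which I expect to be the intended one, is to not insist on consecutive greedy blocks but instead argue as follows. Among all arcs $uv$ of $C$ with $u\in X_{j}$, $v\in X_{i}$, $i<j$, pick one and note $\Vec\chi(X_{i+1}\cup\cdots\cup X_j)\le k$; but also every vertex of $C$ lies in some $X_m$, and as we traverse the cycle from $v$ back to $u$ the indices form a sequence starting at $i$ and ending at $j$, so every $X_m$ touched by $C$ has $m\le j$; choosing the backward arc with $j$ minimal (equivalently $i$ chosen appropriately) one shows all indices touched by $C$ lie in an interval $\{i+1,\dots,j\}$ of bounded $\Vec\chi$. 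Then $C$ is monochromatic for a $k$-dicolouring of $D[X_{i+1}\cup\cdots\cup X_j]$ — but wait, that colouring isn't one of $g,h$. So in fact the correct bookkeeping is: define $h$ directly from the hypothesis by taking, for the cycle-analysis, the fact that $C$ spans a set of $\Vec\chi\le k$; to make this uniform over all cycles, I define a single partition into intervals $\{X_{p_0+1},\dots,X_{p_1}\},\{X_{p_1+1},\dots,X_{p_2}\},\dots$ chosen so that every backward arc of $D$ stays within one interval, which is possible precisely because of the second hypothesis (take the interval boundaries to be indices $p$ such that no arc goes from $X_{>p}$ to $X_{\le p}$ crossing — and between two consecutive such ``clean cut'' indices the block has $\Vec\chi\le k$ by the hypothesis applied to the extreme backward arc).

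The main obstacle is thus this combinatorial lemma about the index partition: showing that one can chop $\{1,\dots,n\}$ into intervals $J_1,\dots,J_m$ such that (i) $\Vec\chi\big(\bigcup_{i\in J_\ell}X_i\big)\le k$ for each $\ell$, and (ii) no arc of $D$ goes from $\bigcup_{i\in J_{\ell'}}X_i$ to $\bigcup_{i\in J_\ell}X_i$ with $\ell'>\ell$ (i.e. all backward arcs are intra-block). Given such a partition, set $g$ = union of $k$-dicolourings of the $D[X_i]$ and $h$ = union of $k$-dicolourings of the blocks $D[\bigcup_{i\in J_\ell}X_i]$; any directed cycle either stays inside one block — killed by $h$ via an intra-block directed cycle? no: $C$ monochromatic for $h$ means $C$ is a monochromatic cycle inside one block's $k$-dicolouring, contradiction — or it crosses a block boundary forward and then must at some point cross back, but a backward crossing is an inter-block backward arc, which (ii) forbids. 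Hence no cycle is monochromatic for the pair $(g,h)$, giving $\Vec\chi(D)\le 2k$. For (i)–(ii), I build the blocks greedily from the right using exactly the second hypothesis: let $b_1=n$, and let $a_1$ be the smallest index such that there is an arc from some $X_j$ with $j\ge a_1$ to some $X_i$ with $i<a_1$ is \emph{avoided} — more simply, let $a_1-1$ be the largest index $<n$ such that no arc enters $\{a_1-1,\dots\}$'s complement from above; then by hypothesis $\Vec\chi(X_{a_1}\cup\cdots\cup X_n)\le k$, recurse on $\{1,\dots,a_1-1\}$. Verifying that this greedy choice indeed yields blocks with the two required properties, and in particular that property~(ii) holds for \emph{all} backward arcs and not just the extremal one used to certify $\Vec\chi\le k$, is the one routine-but-delicate point I would write out carefully.
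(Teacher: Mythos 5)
The paper cites Lemma~\ref{not-ours} to Aboulker, Aubian, and Charbit without reproducing a proof, so the remarks below assess your argument on its own terms. The combinatorial claim your proof rests on --- that $\{1,\dots,n\}$ can be partitioned into intervals $J_1,\dots,J_m$ with (i) $\dichi\bigl(\bigcup_{i\in J_\ell}X_i\bigr)\le k$ for each $\ell$ and (ii) every backward arc intra-block --- is false, and the failure is forced by the exact shape of the second hypothesis: a backward arc from $X_j$ to $X_i$ bounds $\dichi(X_{i+1}\cup\cdots\cup X_j)$, a union that \emph{excludes} $X_i$. Forcing both endpoints $i$ and $j$ into one block therefore gives no control on that block's dichromatic number. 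Concretely, take $X_1=\{u\}$, $X_2=\{v_1,v_2\}$ with arcs $v_1v_2$, $uv_1$, $v_2u$; then $\dichi(X_1)=\dichi(X_2)=1$, the only backward arc $v_2u$ forces only $\dichi(X_2)\le k$, so $k=1$ satisfies the hypotheses, while $\dichi(D)=2$. The partition $\{1\},\{2\}$ has an inter-block backward arc and the partition $\{1,2\}$ has $\dichi=2>k$, so no partition satisfies (i)--(ii). There is also an internal warning sign in your write-up: if (i)--(ii) held, your own argument shows $h$ \emph{alone} is a $k$-dicolouring (every cycle stays in one block), proving $\dichi(D)\le k$; since the example above has $\dichi(D)=2k$, the premise cannot hold. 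Finally, the colour accounting does not match the claim: a block-wise $k$-colouring paired with a part-wise $k$-colouring uses $k^2$ colours, not the advertised $[k]\times\{1,2\}$; the $\{1,2\}$ factor must come from elsewhere.

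The statement is salvaged by relaxing (ii) to allow backward arcs between the \emph{same or adjacent} blocks, and taking the two-colour factor to be the \emph{parity} of the block index rather than any local colouring. Greedily from the left, set $a_1=1$, and given $a_\ell$ let $b_\ell$ be the largest $j\ge a_\ell$ for which some backward arc goes from $X_j$ to an $X_{a'}$ with $a'<a_\ell$ (and $b_\ell=a_\ell$ if none exists); put $J_\ell=\{a_\ell,\dots,b_\ell\}$ and $a_{\ell+1}=b_\ell+1$. Since $a'+1\le a_\ell$, the hypothesis gives $\dichi(X_{a'+1}\cup\cdots\cup X_{b_\ell})\le k$ and hence $\dichi\bigl(\bigcup_{i\in J_\ell}X_i\bigr)\le k$ by monotonicity (or by the first hypothesis if $b_\ell=a_\ell$); and if a backward arc went from block $J_{\ell'}$ to block $J_\ell$ with $\ell'\ge\ell+2$, its source index $j\ge a_{\ell'}$ would qualify in the definition of $b_{\ell+1}$, giving $b_{\ell+1}\ge j\ge a_{\ell'}>b_{\ell+1}$, a contradiction. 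Now let $A$ be the union of the odd-indexed blocks and $B$ the union of the even-indexed blocks: every directed cycle in $D[A]$ is confined to a single block, since stepping down across blocks of $A$ would require a backward arc between blocks whose indices differ by at least $2$; hence $\dichi(A)\le k$, and likewise $\dichi(B)\le k$, so colouring $A$ and $B$ with disjoint palettes of size $k$ gives $\dichi(D)\le 2k$.
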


We need a generalization of Lemma 3.8 in \cite{complete-multipartite}, which in turn is an adaptation of 4.4 in \cite{heroes-characterization}. Our proof differs from theirs only slightly. 

\begin{lemma}\label{partition-dichi}
    Assume that there exists an integer $m$ such that:
    \begin{itemize}
        \item $\{\Delta(1, 1, H), F \}$-free digraphs $D$ have $\Vec{\chi}(D)\leq m$; and

        \item $\{H, K_1+F\}$-free digraphs $D$ have $\Vec{\chi}(D)\leq m$.
    \end{itemize}
    If $D$ is a $\{\Delta(1, 1, H), K_1+F\}$-free digraph with a partition $(X_1, \dots, X_n)$ of $V(D)$, and $m'$ an integer such that:
    \begin{itemize}
        \item for every $1\leq i\leq n$, we have $\Vec{\chi}(X_i)\leq m'$;

        \item for every $1\leq i\leq n$ and for every $v\in X_i$, we have $\Vec{\chi}(N^+(v)\cap (X_1\cup \cdots \cup X_{i-1}))\leq m'$; and

        \item for every $1\leq i\leq n$ and for every $v\in X_i$, we have $\Vec{\chi}(N^-(v)\cap (X_{i+1}\cup \cdots \cup X_{n}))\leq m'$;
    \end{itemize}
    then $\Vec{\chi}(D)\leq 6(m+m')+2$.
\end{lemma}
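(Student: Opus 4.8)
The plan is to follow the strategy of Lemma 4.4 in \cite{heroes-characterization} and Lemma 3.8 in \cite{complete-multipartite}, adapting the bookkeeping to the setting where we only forbid $K_1+F$ rather than $F$ itself. Write $n_0 = 6(m+m')+2$ for the target bound. The idea is to build a $\Vec{\chi}$-colouring of $D$ by processing the parts $X_1, \dots, X_n$ in order, but grouping consecutive parts into "blocks" so that within a block the colouring can be controlled using the two hypotheses on $\{\Delta(1,1,H),F\}$-free and $\{H,K_1+F\}$-free digraphs, and across blocks we can apply Lemma \ref{not-ours}.

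First I would fix a vertex $z \in V(D)$ and look at the subdigraph induced by $N^0(z) \cup \{z\}$ — more precisely, I would use the $\{\Delta(1,1,H), K_1+F\}$-freeness together with the structure of $z$'s neighbourhoods. The key local observation is: for any $v \in X_i$, the digraph induced on $N^+(v) \cap (X_1 \cup \cdots \cup X_{i-1})$ is $F$-free (not merely $K_1+F$-free), because if it contained an induced $F$, then adding $v$ would give an induced $K_1 + F$ in $D$ — here $v$ is non-adjacent to that copy of $F$? No: $v$ sees all of $N^+(v)$, so that does not work directly. Instead the right statement is that $N^+(v) \cap (X_1\cup\cdots\cup X_{i-1})$ together with $v$, after removing $v$'s out-arcs, is relevant; the actual gain comes from combining with a well-chosen vertex. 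So the more careful route, following \cite{complete-multipartite}, is: pick $v$ minimizing the index $i$ over a bad set, consider $S = N^+(v) \cap (X_1 \cup \cdots \cup X_{i-1})$; since $D$ is $\Delta(1,1,H)$-free and $v$ is out-complete to $S$ with $v$ also adjacent appropriately, any induced copy of $\Delta(1,1,H)$ in $D[S \cup \{v\}]$ containing $v$ in a specific role is forbidden, which forces $D[S]$ to be $H$-free or $\{H, K_1+F\}$-free depending on the configuration, and then the second hypothesis bounds $\Vec{\chi}(S)$ by $m$. Combined with the hypothesis $\Vec{\chi}(S) \le m'$ being replaced/augmented, we get control. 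I would make this precise by the usual case analysis on whether a putative copy of $K_1+F$ or $\Delta(1,1,H)$ appears.

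The main structural step is then the block decomposition. Starting from $X_1$, greedily form a maximal block $X_1 \cup \cdots \cup X_{j_1}$ whose dichromatic number is at most some threshold $\theta$ (to be chosen around $3(m+m')$), then start the next block at $X_{j_1+1}$, and so on, producing blocks $Y_1, \dots, Y_p$. For the "within a block" bound, I would use Lemma \ref{partition-dichi}'s hypotheses restricted to the block together with the two global hypotheses (via the argument of the previous paragraph) to show $\Vec{\chi}(Y_\ell) \le 3(m+m')+1$ or so; this is where the $\{\Delta(1,1,H),F\}$-free bound $m$ and the $\{H,K_1+F\}$-free bound $m$ both enter, roughly doubling via Lemma \ref{not-ours} applied inside the block, and the $+m'$ terms come from the three chromatic conditions on the partition. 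For the "across blocks" bound, the maximality of each block means there is a backward arc from $Y_{\ell+1}$ into $Y_\ell$ witnessing that $Y_\ell \cup Y_{\ell+1}$ already exceeds $\theta$; this lets me verify the second hypothesis of Lemma \ref{not-ours} for the coarse partition $(Y_1, \dots, Y_p)$, yielding $\Vec{\chi}(D) \le 2\theta$. Summing the contributions gives the $6(m+m')+2$ bound; I would tune $\theta$ at the end to land exactly on that constant.

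The hard part will be the within-a-block estimate: correctly isolating, for a vertex $v$ witnessing that a block is "still small", which of $D[N^+(v) \cap (\text{earlier parts})]$ and $D[N^-(v) \cap (\text{later parts})]$ is $F$-free versus $\{H\}$-free versus $\{H, K_1+F\}$-free, so that exactly one of the two global hypotheses applies and gives a bound of $m$, and then assembling these via two applications of Lemma \ref{not-ours} (one for out-neighbourhoods scanned left-to-right, one for in-neighbourhoods scanned right-to-left) without the constants blowing up past $6(m+m')+2$. The rest — the greedy blocking and the coarse application of Lemma \ref{not-ours} — is routine once the block bound is in hand, and it mirrors \cite{complete-multipartite,heroes-characterization} closely.
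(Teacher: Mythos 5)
Your high-level instinct — apply Lemma \ref{not-ours} after bounding the dichromatic number of the ``middle'' segments between backward arcs — is right, but the proposal is missing the single step on which the whole argument turns, and the block-decomposition machinery you layer on top is both unnecessary and, as written, not sound.

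The missing idea is to bound the \emph{non}-neighbourhoods, not the neighbourhoods. You circle around trying to show that $N^+(v)\cap(X_1\cup\cdots\cup X_{i-1})$ is $F$-free, correctly observe that this fails, and then leave the matter unresolved (``I would make this precise by the usual case analysis''). The paper instead proves the claim that $\dichi(N^0(v))\le m$ for every $v\in V(D)$: since $D$ is $(K_1+F)$-free, $N^0(v)$ is $F$-free, and since $D$ is $\Delta(1,1,H)$-free so is $D[N^0(v)]$, so the \emph{first} hypothesis applies directly. This is the piece your sketch never produces, and without it you cannot control the portion of a middle segment that is non-adjacent to the two endpoints of a backward arc — the partition hypotheses only speak to $N^+$ backward and $N^-$ forward.

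Once you have that, there is no need for a greedy blocking into $Y_1,\dots,Y_p$ or for iterating Lemma \ref{not-ours}; a single direct application suffices. Given a backward arc $uv$ with $u\in X_j$, $v\in X_i$, $i<j$, set $X=X_{i+1}\cup\cdots\cup X_{j-1}$, and remove $A=(N^-(v)\cup N^0(v))\cap X$ and $B=(N^+(u)\cup N^0(u))\cap X$. Each of $A,B$ has $\dichi\le m+m'$: the $N^0$ part by the claim above, the $N^-(v)\cap X$ part by the partition hypothesis (as $X$ lies entirely in parts with index $>i$), and symmetrically $N^+(u)\cap X$ lies in parts with index $<j$. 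What remains, $X'=X\setminus(A\cup B)$, is out-complete from $v$ and in-complete to $u$; if $\dichi(X')>m$ then the \emph{second} hypothesis yields a copy of $H$ in $X'$, and together with $u\to v$ this is a $\Delta(1,1,H)$ — contradiction. Hence $\dichi(X)\le 3m+2m'+1$, so $\dichi(X\cup X_j)\le 3(m+m')+1$, and Lemma \ref{not-ours} with $k=3(m+m')+1$ gives $\dichi(D)\le 6(m+m')+2$. Note also that your ``maximality of a block of dichromatic number $\le\theta$ gives a backward arc from the next block'' step does not hold — maximality only says adding the next $X_i$ pushes $\dichi$ past $\theta$, which need not come from a backward arc at all — so even granting the block strategy it would need to be reformulated, and you would still need the $N^0(v)$ bound to close the within-block estimate.
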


\begin{proof}
    We start with the following claim.

    \sta{\label{non-nbrs}
    $\dichi(N^0(v))\leq m$ for every $v\in D$.
    }

\begin{subproof}
    Since $D$ is $K_1+F$-free, it follows that $N^0(v)$ is $F$-free. Furthermore, since $D$ is $\Delta(1, 1, H)$-free, we have by the definition of $m$ that $\dichi(N^0(v))\leq m$. 
\end{subproof}
    
    Set $k'=2(m+m')+m+1$. It suffices to show that the partition $(X_1, \dots, X_n)$ satisfies the hypothesis of Lemma \ref{not-ours} with $k=k'+m'$. 
    Let $uv$ be an edge such that $u\in X_j$, $v\in X_i$, and $i<j$, and set $X = X_{i+1}\cup \cdots \cup X_{j-1}$. For a contradiction, assume that $\Vec{\chi}(X)>k'$. Let $A = (N^-(v)\cup N^0(v))\cap X$. By the hypothesis and by (\ref{non-nbrs}), the dichromatic number of $A$ is at most $m+m'$. Similarly, the set $B = (N^+(u)\cup N^0(u))\cap X$ has dichromatic number at most $m+m'$. Thus, the set $X'=X\setminus (A\cup B)$ has $\Vec{\chi}(X')>k'-2(m+m')>m$. Consequently, there exists a copy $X''$ of $H$ in $X'$. But then, by the definitions of $A$ and $B$, it follows that $\{u, v \}\cup X''$ induces a copy of $\Delta(1, 1, H)$, a contradiction. Thus, $\Vec{\chi}(X)\leq k'$, so $\Vec{\chi}(X\cup X_j)\leq k'+m'$, as desired. 
\end{proof}

We dedicate the rest of the section to proving Theorem \ref{main1}.

\noindent\textbf{Proof of Theorem \ref{main1}.} By Lemma \ref{local}, it is enough to prove the result for $r = 1$. That is, we want to prove that $K_1+F$ cooperates. Evidently, $H=K_1$ is a hero in every class of graphs. Assume then that $H_1$ and $H_2$ are heroes in $\{K_1+F\}$-free digraphs. Consequently, they are heroes in $F$-free digraphs, and since $F$ cooperates, it follows that $H_1\Rightarrow H_2$ is a hero in $F$-free digraphs. Thus, by Theorem \ref{cmp-mult}, it follows that $H_1\Rightarrow H_2$ is a hero in $\{K_1+F\}$-free digraphs.

It remains to show that $\Delta(1, 1, H)$ is a hero in $\{K_1+F\}$-free digraphs whenever $H$ is a hero in $\{K_1+F\}$-free digraphs. However, since we exclude $\Delta(1, 1, H)$ instead of $\Delta(1, k, H)$, in some places we are able to simiplify the proofs.

Let us assume that $H$ is a hero in $\{K_1+F\}$-free digraphs. Let $c$ be an integer such that $\{K_1+F, H \}$-free digraphs $D$ have $\Vec{\chi}(D)\leq c$. Since $H$ is a hero in $K_1+F$-free digraphs, $H$ is a hero in $F$-free digraphs as well, and since $F$ cooperates, it follows that $\Delta(1, 1, H)$ is a hero in $F$-free digraphs. Let $b'$ be such that $\{F, \Delta(1, 1, H) \}$-free digraphs $D$ have $\dichi(D)\leq b'$. Since $F$ is localized, set $f_1(r, k, H)$ as the function such that $\{\Delta(1, 1, H), rK_1+F \}$ $k$-local digraphs $D$ have $\dichi(D)\leq f_1(r, k, H)$ whenever $\Delta(1, 1, H)$ is a hero in $\{(r-1)K_1+F\}$-free digraphs, and $H$ is a hero in $\{rK_1+F\}$-free digraphs. Let $f_2(r, k, H)$ be the equivalent but from the fact that $F$ is colocalized. Now let $f(r, k, H) = \max\{f_1(r, k, H), f_2(r, k, H) \}$. Set
$$
\hat{f}(\beta):=2f\left(1, 2f\left(1, 2f\left(1, \beta, H\right)+1, H\right)+1, H\right).
$$
Finally, set $\beta'=2|V(H)|(c+b')+b'+1$. We will show that $\{\Delta(1, 1, H), K_1+F\}$-free digraphs $D$ have $\dichi(D)\leq b$ where 
$$
b = 6(\max\{b', c \}+\beta') +3\hat{f}(\beta')+2.
$$

Assume that $D$ is a $\{\Delta(1, 1, H), K_1+F \}$-free digraph. Henceforth, we will use the terms $\beta$\textit{-bags} and $\beta$\textit{-bag-chains} to refer to $(c, \beta)$-bags and $(c, \beta)$-bag-chains. To achieve the first objective, we start by proving that the absence of a $\beta$-bag-chain of length 2 bounds the dichromatic number. Call a vertex $v$ $\beta$\textit{-red} if $\dichi(N^+(v))\leq \beta$, and $\beta$\textit{-blue} if $\dichi(N^-(v))\leq \beta$. The following two claims are the equivalent of Lemma 4.11 in \cite{dense-digraphs}, although our proof is significantly simpler as we deal with $\Delta(1, 1, H)$ instead of $\Delta(1, m, H)$ for some $m$.

\vspace{2mm}
\sta{\label{balls1}
For every $\beta\in \mathbb{N}$, if $D$ does not have a $\beta$-bag-chain of length 2, then $\dichi(D)\leq 2f(1, \beta, H)$.
}
\begin{subproof}
    Set $R, B$, and $U$ as the sets of $\beta$-red, $\beta$-blue and uncoloured vertices respectively. We start by proving that $U$ is empty. For the sake of a contradiction, assume that $u\in U$. Set $B_1 = N^-(u)$ and $B_2 = N^+(u)$. We claim that $B_1, B_2$ is a $\beta$-bag-chain. Let $v\in B_1$.  If $\dichi(N^-(v)\cap B_2)>c$, then there exists a copy $X$ of $H$ in $B_2$. But then $\{u, v \} \cup X$ induces a copy of $\Delta(1, 1, H)$ in $D$, a contradiction. A symmetric argument proves that if $v\in B_2$, then $\dichi(N^+(v)\cap B_1)\leq c$. That is, $B_1, B_2$ is $\beta$-bag-chain of length 2, a contradiction. Thus, $U$ is empty. Notice that $D[R]$ is $d$-local, so $\dichi(R)\leq f(1, d, H)$. Similarly, $D[B]$ is $\beta$-colocal, so $\dichi(B)\leq f(1, \beta, H)$, and hence $\dichi(D)\leq 2f(1, \beta, H)$, as claimed.
\end{subproof}

\sta{\label{balls2}
For every $\beta\in \mathbb{N}$, if $D$ does not have a $\beta$-bag-chain of length 8, then  $\dichi(D)\leq \hat{f}(\beta)$.
}
\begin{subproof}
    We proceed by contrapositive. Assume that $\dichi(D)>\hat{f}$. By (\ref{balls1}), there exists a $\left(2f\left(1, 2f\left(1, \beta, H\right)+1, H\right)+1\right)$-bag-chain of length 2, say $A_1, A_2$. By definition of a bag and by (\ref{balls1}), it follows that $A_1$ contains a $(2f(1, \beta, H)+1)$-bag-chain of length 2 consisting of bags $A_1^1, A_1^2$. Similarly, $A_2$ contains the $(2f(1, \beta, H)+1)$-bag-chain $A_2^1, A_2^2$. Finally, using the same reasoning, we can split each of these bags into the $\beta$-bag-chain $B_1, \dots, B_8$ where $B_1, B_2$ is the $\beta$-bag-chain of $A_1^1$, where $B_3, B_4$ is the $\beta$-bag-chains of $A_1^2$, and so on. But then $B_1, \dots, B_8$ is a $\beta$-bag-chain of length 8, finishing the proof.
\end{subproof}

With the first objective achieved, we now prove the second objective. From now on, we assume $B_1, \dots, B_t$ is a $\beta'$-bag-chain in $D$ with $t$ maximum, where $\beta' = 2|V(H)|(c+b')+b'+1$. For convenience, define $B_{i, j}$, where $i\leq j$, as the union of the bags $B_i, \dots, B_j$.

\sta{\label{minor1}
$\dichi(N^0(v))\leq b'$ for every $v\in V(D)$.
}
\begin{subproof}
    This is a consequence of the fact that the set of non-neighbours of $v$ is $\{F, \Delta(1, 1, H) \}$-free. Thus, the result holds by the definition of $b'$.
\end{subproof}

The following is the equivalent of Claim 4.3 in \cite{dense-digraphs}, although we are able to prove a stronger statement.

\sta{\label{minor2}
For every $i\geq 1$, $v\in B_i$, and $s> 1$,
}
\begin{itemize}
    \item $N^+(v)\cap B_{i-s}=\emptyset$, and

    \item $N^-(v)\cap B_{i+s}=\emptyset$.
\end{itemize}

\begin{subproof}
    For a contradiction, let $s>1$ be the smallest integer such that there exist vertices $u$ and $v$ such that $u\in N^+(v)\cap B_{i-s}$ or $u\in N^-(v)\cap B_{i+s}$. We deal first with the former. 

    Suppose first that $s = 2$. Let $A = (N^-(u)\cup N^+(v))\cap B_{i-1}$, and $B = (N^0(u)\cup N^0(v))\cap B_{i-1}$. By the definition of a $\beta$-bag-chain and (\ref{minor1}), $\dichi(A)\leq 2c$ and $\dichi(B)\leq 2b'$. Thus, $\dichi(B_{i-1}\setminus (A\cup B))\geq \beta'-2c-2b'>c$. By the definition of $c$, there exists a copy $X$ of $H$ in $B_{i-1}\setminus (A\cup B)$. But by the definition of $A$ and $B$, this implies that $\{u, v \}\cup X$ induces a copy of $\Delta(1, 1, H)$, a contradiction.

    Suppose then that $s>2$. The proof for this case is very similar. Let $A = (N^-(u)\cup N^+(v))\cap B_{i-1})$ and $B=(N^0(u)\cup N^0(v))\cap B_{i-1}$. By the minimality of $s$, and since $s>1$, we have $A = N^+(v)\cap B_{i-1}$. By (\ref{minor1}),  it follows that $\dichi(B)\leq 2b'$. Thus, $\dichi(B_{i-1}\setminus (A\cup B))\geq \beta'-2b'-c>c$. By the definition of $c$, there exists a copy $X$ of $H$ in $B_{i-1}\setminus (A\cup B)$. But then, by the definition of $A$ and $B$, this implies that $\{u, v\}\cup X$ induces a copy of $\Delta(1, 1, H)$, a contradiction. 

    The proof for the case where $u\in N^-(v)\cap B_{i+s}$ is analogous with arcs reversed.
\end{subproof}

The following is the equivalent of Claim 4.4 and Claim 4.5 in \cite{dense-digraphs}.

\sta{\label{minor3}
For every $i$ and $v\in B_i$,
}
\begin{itemize}
    \item $\Vec{\chi}(N^+(v)\cap B_{1, i-1})\leq c$, \textit{and}

    \item $\Vec{\chi}(N^-(v)\cap B_{i+1, t})\leq c$.
\end{itemize}
\begin{subproof}
    The result is immediate from (\ref{minor2}) and the definition of $\beta'$-bag-chains.
\end{subproof}

We can now prove our second objective:

\sta{\label{major1}
$\dichi(B_{1, t})\leq 6(\max\{b', c \}+\beta')+2$.
}
\begin{subproof}
    Applying Lemma \ref{partition-dichi} with $m = \max\{b', c \}$, and $m'=\beta'$, where the hypothesis holds by (\ref{minor3}), it follows that $\Vec{\chi}(B_{1, t})\leq 6(\max\{b', c \}+\beta')+2$.
\end{subproof}

For our final objective, we will partition the  vertices of $V(D)\setminus B_{1,t}$ in such a way that they behave similarly to a bag chain as well. We partition $V(D)\setminus B_{1,t}$ into sets $Z_i$ we call \textit{zones} such that $v\in Z_i$ if $i$ is the largest index such that $\dichi(N^-(v)\cap B_i)>c$, and $v\in Z_0$ if no such $i$ exists. Furthermore, for convenience, set $Z_{i, j} := Z_i\cup \dots \cup Z_j$ for $i\leq j$. We proceed to prove claims that will allow us to bound $\dichi(Z_{0, t})$ by using Lemma \ref{partition-dichi}. To this end, in (\ref{zones1}--\ref{zones3}), we will show that zones interact with the bag chain and each other in limited ways. 

\sta{\label{zones1}
For every $i$ and every $v\in Z_i$,
}
\begin{itemize}
    \item $\dichi(N^-(v)\cap B_{i+r})\leq c$ for $r\geq 1$, and
    \item $N^+(v)\cap B_{i-r}=\emptyset$ for $r\geq 2$.
\end{itemize}
\begin{subproof}
    The first bullet point is true by the definition of $Z_i$. We prove the second. For a contradiction, assume that there exists a vertex $u$ such that $u\in N^+(v)\cap B_{i-r}$. We claim that $\dichi(N^-(v)\cap B_{i-1})\leq b'+2c$. For a contradiction, assume this is not the case. Set
    $$
    A:=(N^0(u)\cup N^-(u))\cap (N^-(v)\cap B_{i-1}).
    $$
    By (\ref{minor1})  and (\ref{minor3}), $\dichi(A)\leq b'+c$, so $\dichi((N^-(v)\cap B_{i-1})\setminus A)>c$. Thus, there exists a copy $X$ of $H$ in $(N^-(v)\cap B_{i-1})\setminus A$. But then $\{u, v \}\cup X$ induces a copy of $\Delta(1, 1, H)$, a contradiction.

    Thus, $\dichi(N^-(v)\cap B_{i-1})\leq b'+2c$. Since $\dichi(N^0(v)\cap B_{i-1})\leq b'$ by (\ref{minor1}), and since $B_{i-1}$ is a $\beta'$-bag, it follows that $\dichi(N^+(v)\cap B_{i-1})\geq |V(H)|(b'+c)+1$. By the definition of a zone, there exists a copy $X'$ of $H$ in $N^-(v)\cap B_i$. Set
    $$
    A':=\bigcup_{x\in X'}(N^0(x)\cup N^+(x))\cap (N^+(v)\cap B_{i-1}).
    $$
    By (\ref{minor1}) and (\ref{minor3}), it follows that $\dichi(A')\leq |V(H)|(b'+c)$. Thus, $\dichi((N^+(v)\cap B_{i-1})\setminus A')>0$, so there exists a vertex $u'$ in $(N^+(v)\cap B_{i-1})\setminus A'$. This, however, implies that $\{u', v, X' \}$ induces a copy of $\Delta(1, 1, H)$, a contradiction.
\end{subproof}

\sta{\label{zones2}
For every $i\geq 0$, $v\in B_i$, and $r\geq 2$, we have $N^+(v)\cap Z_{i-r}=\emptyset$.
}
\begin{subproof}
    For a contradiction, assume that there exists a vertex $u$ such that $u\in N^+(v)\cap Z_{i-r}$. Now let
    $$
    A = (N^0(u)\cup N^-(u))\cap B_{i-1},
    $$
    and let
    $$
    B = (N^+(v)\cup N^0(v))\cap B_{i-1}.
    $$
    By the definition of zones and by (\ref{minor1}), $\dichi(A)\leq b'+c$, and by the definition of a $\beta$-bag-chain and (\ref{minor1}), $\dichi(B)\leq b'+c$. Thus, $\dichi(B_{i-1}\setminus (A\cup B))\geq \beta'-(b'+c) - (b'+c)>c$. Consequently, there exists a copy $X$ of $H$ in $B_{i-1}\setminus (A\cup B)$. But by the definition of $A$ and $B$, $\{u, v \}\cup X$ induces a copy of $\Delta(1, 1, H)$, a contradiction.
\end{subproof}
    
\sta{\label{zones3}
For every $i$, $v\in B_i$, and $r\geq 3$, we have $N^-(v)\cap Z_{i+r}=\emptyset$.
}
\begin{subproof}
    For a contradiction, assume that there exists a vertex $u$ such that $u\in N^-(v)\cap Z_{i+r}$. Now let
    $$
    A := (N^0(u)\cup N^+(u))\cap B_{i+1},
    $$
    and let
    $$
    B := (N^0(v)\cup N^-(v))\cap B_{i+1}.
    $$
    By (\ref{minor1}) and (\ref{zones2}), $\dichi(A)\leq b'$. Furthermore, by (\ref{minor1}) and the definition of bags, $\dichi(B)\leq b'+c$. Thus, $\dichi(B_{i+1}\setminus (A\cup B))  \geq \beta'-b'-(b'+c)>c$. Consequently, there exists a copy $X$ of $H$ in $B_{i+1}\setminus (A\cup B)$. But by the definition of $A$ and $B$, it follows that $\{u, v\}\cup X$ induces a copy of $\Delta(1, 1, H)$, a contradiction.
\end{subproof}

Finally, we are ready to bound $\dichi(Z_i)$. The following is the equivalent of Claim 4.10 in \cite{dense-digraphs}.

\sta{\label{zones4}
For every $i$, $\dichi(Z_i)\leq \hat{f}(\beta')$.
}
\begin{subproof}
    By (\ref{balls2}), it is enough to prove that zones do not have a $\beta'$-bag-chain of length 8. We will do this by using the maximality of $t$. Assume for a contradiction that $Y_1, \dots, Y_8$ is a $\beta'$-bag-chain of length 8 in $Z_i$. By (\ref{zones1}), (\ref{zones2}) and (\ref{zones3}), $B_1, \dots, B_{i-3}, Y_1, \dots, Y_8, B_{i+3}, \dots, B_t$ is a longer $\beta'$-bag-chain than $B_1, \dots, B_t$ which contradicts the maximality of $t$.
\end{subproof}

To finish the proof, it remains to show we can partition $Z_{0, t}$ such that we are able to colour each part. The following is the equivalent of Claim 4.9 in \cite{dense-digraphs}.

\sta{\label{zones5}
For every $i$ and $v\in Z_i$,
}
\begin{itemize}
    \item $N^+(v)\cap Z_{0, i-3}=\emptyset$, and
    \item $N^-(v)\cap Z_{i+3, t}=\emptyset$.
\end{itemize}
\begin{subproof}
    Let us prove the first bullet point. Suppose for a contradiction that there exists a vertex $u$ such that $u\in N^+(v)\cap Z_{0, i-3}$. Now let
    $$
    A := (N^0(u)\cup N^-(u))\cap B_{i-2},
    $$
    and
    $$
    B:=(N^0(v)\cup N^+(v))\cap B_{i-2}.
    $$
    By (\ref{minor1}) and the definition of zones, $\dichi(A)\leq b'+c$. Similarly, $\dichi(B)\leq b'$ by (\ref{minor1}) and (\ref{zones1}). Since $B_{i-2}$ is a $\beta'$-bag, we have $\dichi(B_{i-2}\setminus (A\cup B))>\beta'-(b'+c)-b'>c.$ By the definition of $c$, there exists a copy $X$ of $H$ in $B_{i-2}\setminus(A\cup B)$. But then, by the definitions of $A$ and $B$, it follows that $\{u, v \}\cup X$ induces a copy of $\Delta(1, 1, H)$, a contradiction.  A similar argument, using the established claims, gives the second bullet point.
\end{subproof}

We are ready to prove that $\dichi(Z_{0, t})$ is bounded.

\sta{\label{zones6}
$\dichi(Z_{0, t})\leq 3\hat{f}(\beta')$.
}
\begin{subproof}
    Let $\mathcal{Z}_i=\bigcup_{j\cong i\mod{3}}Z_j$. By (\ref{zones5}), every strongly connected component in $\mathcal{Z}_i$ is contained in a zone $Z_j$. Thus, by (\ref{zones4}), $\dichi(\mathcal{Z}_i)\leq \hat{f}(\beta')$. Since $\mathcal{Z}_1, \mathcal{Z}_2, \mathcal{Z}_3$ is a partition of $Z_{0, t}$, it follows that $\dichi(Z_{0, t})\leq 3\hat{f}(\beta')$ as claimed. 
\end{subproof}

We are ready to finish the proof. Since $V(D) = B_{1, t}\cup Z_{0, t}$, and by (\ref{major1}) and (\ref{zones6}), we have:
$$
\dichi(D)\leq \dichi(B_{1, t})+\dichi(Z_{0, t}) \leq 6(\max\{b', c \}+\beta') +2+3\hat{f}('\beta)
$$
as claimed.
\qed{}


\section{Forbidding $rK_1 + \vec{P_3}$} \label{sec:heroes}

In this section, we prove Theorem \ref{main2}, which we restate for the reader's convenience.

\maintwo*

Equivalently, we will prove that for every $r\geq 1$, the digraph $rK_1+\vec{P_{3}}$ cooperates. We will use Theorem \ref{main1} to do this. Thus, we need to prove that $\vec{P_{3}}$ cooperates, and that $\vec{P_{3}}$ is localized and colocalized. The fact that $\vec{P_3}$ cooperates is a consequence of Theorem \ref{thm:p3}. Notice that by Lemma \ref{fix}, we only need to show that $\vec{P_3}$ is localized. 

To prove that $\vec{P_3}$ is localized, we use domination. We say a set of vertices $S_1$ \textit{dominates} a set of vertices $S_2$, or equivalently $S_1$ is a \textit{dominating set} for $S_2$, if every vertex in $S_2\setminus S_1$ is seen by a vertex in $S_1$. A digraph $F$ \textit{dominates} if, for every $r\geq 1$, the following two:
\begin{itemize}
    \item $\Delta(1, 1, H)$ is a hero in $\{(r-1)K_1+F\}$-free digraphs;
    \item $H$ is a hero in $\{rK_1+F\}$-free digraphs;
\end{itemize}
imply that there exists a function $g(r, k, H)$ such that for every $\{\Delta(1, 1, H), rK_1+F \}$-free $k$-local digraph $D$, either $\Vec{\chi}(D)\leq g(r, k, H)$, or $F$-free acyclic induced subsets $S$ of $V(D)$ have a dominating set in $D$ of size at most $g(r, k, H)$. While this definition is rather technical, it allows us to formulate a proof in such a way that parts of it are more general than the case of $\vec{P_3}$. 

We want to prove that if $F$ dominates, then $F$ is localized. The concept that a digraph $F$ dominates, as well as how this implies that $F$ is localized, is meant to generalize the proof strategy  devised by Harutyunyan, Le, Newman, and Thomassé \cite{dense-digraphs} to prove that $k$-local $rK_1$-free digraphs, where $r\geq 2$, have bounded dichromatic number.

To prove that digraphs that dominate are localized, we use a concept introduced in \cite{dense-digraphs}. A family of digraphs $\mathcal{C}$ is \textit{tamed} if, for every $m$, there exists integers $M$ and $l$ such that if $D\in \mathcal{C}$ has $\dichi(D)\geq M$, then there exists a subset $X\subseteq V(D)$ such that $|X|\leq l$ and $\dichi(X)\geq m$. The following proof is a slight generalization of the proof of Claim 2.4 in \cite{dense-digraphs}.

\begin{lemma}\label{tame2}
If $F$ dominates and the following two hold:
\begin{itemize}
    \item $\Delta(1, 1, H)$ is a hero in $\{(r-1)K_1+F\}$-free digraphs, and
    \item $H$ is a hero in $\{rK_1+F\}$-free digraphs,
\end{itemize}
then, for every $k\geq 1$, the family of $\{\Delta(1, 1, H), rK_1+F \}$-free $k$-local digraphs is tamed.
\end{lemma}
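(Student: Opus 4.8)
The plan is to bootstrap the tamed property from a weaker structural fact that the domination hypothesis gives us. Recall that a family is tamed if large dichromatic number forces a small set of large dichromatic number. So fix $k \geq 1$ and let $D$ be a $\{\Delta(1,1,H), rK_1+F\}$-free $k$-local digraph with $\dichi(D)$ very large. Since $D$ dominates (in the technical sense), either $\dichi(D) \leq g(r,k,H)$ — which is excluded once $\dichi(D)$ exceeds this threshold — or every $F$-free acyclic induced subset $S$ of $V(D)$ has a dominating set in $D$ of size at most $g(r,k,H)$. So I may assume the latter alternative holds.

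Now the key idea, following Claim 2.4 of \cite{dense-digraphs}: consider an optimal dicolouring of $D$; since $\dichi(D)$ is large, there are many colour classes, each of which is an acyclic induced subset. If one colour class $S$ is not $F$-free, then $S$ contains a copy of $F$, and since $D$ is $rK_1+F$-free... actually the cleaner route: pick a colour class $S$, which is acyclic. If $S$ is $F$-free, then it has a dominating set $T$ of size at most $g := g(r,k,H)$. Since $D$ is $k$-local, $\dichi(N^+(t)) \leq k$ for each $t \in T$, so $\dichi(N^+(T)) \leq k|T| \leq kg$; but $S \setminus T \subseteq N^+(T)$, so $\dichi(S) \leq \dichi(S\setminus T) + |T| \leq kg + g$. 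Hence every $F$-free colour class has dichromatic number (hence, being acyclic, is fine) — the point is $|T \cup (\text{a small witness inside } N^+(T))|$ is small. To extract a small set of large dichromatic number, I instead argue: if \emph{every} colour class $S_i$ were $F$-free, each is dominated by a set $T_i$ of size $\leq g$, and $S_i \subseteq T_i \cup N^+(T_i)$. Take $m$ colour classes $S_1,\dots,S_m$ with $\bigcup T_i =: T$, $|T| \leq mg$; then $\dichi\left(\bigcup_{i\le m} S_i\right) \leq \dichi(N^+(T)) + |T| \leq kmg + mg$, a bounded union of colour classes can only have bounded dichromatic number trivially, so this does not immediately bound things — rather, I use it to locate a small high-$\dichi$ set: the set $T \cup (\text{union of } m \text{ witnesses})$ has size $O(m^2)$-ish once we note each $N^+(t)$ for $t\in T$ contributes a set of dichromatic number $\le k$ but bounded \emph{size} is not guaranteed. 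So the honest approach is: if some colour class is \emph{not} $F$-free it contains $F$ on a bounded number of vertices; combined with $rK_1+F$-freeness and the structure, this is handled as in \cite{dense-digraphs}. I will mirror their Claim 2.4 proof almost verbatim, replacing ``$rK_1$-free'' and the use of independence number by ``$rK_1+F$-free'' and the use of the domination alternative, with $\Delta(1,1,H)$-freeness and the heroes hypotheses supplying the bound $g(r,k,H)$ in place of their corresponding constant.

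Concretely, the steps in order: (1) Fix $k$; let $D$ be $\{\Delta(1,1,H), rK_1+F\}$-free and $k$-local; set $g = g(r,k,H)$ from the domination property, which applies because of the two hypotheses on $H$ and $\Delta(1,1,H)$. (2) Set $M := g+1$ (or slightly larger) and suppose $\dichi(D) \geq M$; then the first alternative of domination fails, so every $F$-free acyclic induced $S \subseteq V(D)$ has a dominating set of size $\leq g$ in $D$. (3) Given target $m$, fix a proper dicolouring with $\dichi(D)$ colours and select colour classes; for each $F$-free colour class $S$ with dominating set $T_S$, observe $S \subseteq T_S \cup N^+(T_S)$ and $\dichi(N^+(T_S)) \leq k|T_S| \leq kg$ by $k$-locality, hence $\dichi(S) \leq kg + g =: c_0$. (4) For colour classes that are \emph{not} $F$-free: any such class contains a copy of $F$; iterate removing copies of $F$ — after removing $r$ vertex-disjoint copies we would get $rK_1 + F$ unless... here I follow the Harutyunyan et al. argument that the ``$F$-rich'' part has bounded dichromatic number. (5) Combine: $V(D)$ splits into the union of $F$-free classes and the $F$-rich part; the former contributes $\dichi \leq c_0$ per class but we need the global bound — this is where I conclude that in fact $\dichi(D)$ itself is bounded unless a small high-$\dichi$ subset exists, by a standard ``if no small witness then bounded'' dichotomy identical to \cite{dense-digraphs}. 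Output $M$ and $l$ accordingly (with $l$ polynomial in $m$, $g$, and $|V(F)|$).

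The main obstacle, and the place requiring genuine care, is step (4)–(5): making precise how $\{\Delta(1,1,H), rK_1+F\}$-freeness together with $k$-locality forces that the ``$F$-containing'' part of any near-optimal dicolouring is confined to a bounded-$\dichi$ region, so that the domination alternative (which only directly controls $F$-\emph{free} acyclic sets) actually yields a bound on all of $D$ outside a small witness set. In \cite{dense-digraphs} this is handled via their induction on $r$ and the interplay with $(r-1)K_1+F$-free digraphs (our first hypothesis, that $\Delta(1,1,H)$ is a hero in $\{(r-1)K_1+F\}$-free digraphs, is exactly what powers the inductive step); I expect to need to reproduce that induction carefully, with $F$ carried along passively throughout, and verify that no step of their argument used a property of $K_1$ beyond what ``$F$ dominates'' now supplies. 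The rest is bookkeeping of constants.
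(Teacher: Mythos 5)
Your proposal never reaches a complete argument, and you acknowledge this yourself when you flag steps (4)--(5) as ``the main obstacle.'' The sketch you give --- work with the colour classes of an optimal dicolouring, split them into $F$-free ones (on which you apply domination) and $F$-rich ones (which you then don't know how to handle) --- is not how the paper proceeds, and it is not a dead end that can be salvaged in the way you suggest. Two concrete problems: (a) each colour class is acyclic, so its dichromatic number is trivially $1$; the bound $\dichi(S)\le kg+g$ you derive for an $F$-free class says nothing beyond this, and the union of $m$ colour classes has $\dichi \le m$ regardless of any domination. So the colour-class decomposition gives you no leverage. (b) More fundamentally, the domination property supplies dominating sets for $F$-free \emph{acyclic} sets, but by itself this does not produce a small vertex subset of large dichromatic number --- the defining requirement of tameness. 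You need an additional mechanism to manufacture that witness set.

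The paper's proof has a different architecture. It proceeds by induction on the parameter $m$ in the tameness definition. The inductive step starts from a \emph{minimum dominating set $B$ of the whole digraph $D$}, argues via $k$-locality that $|B|$ is large, selects a subset $W\subseteq B$ of prescribed size $d$, and observes that the vertices out-complete to $W$ have high dichromatic number, so by the inductive hypothesis they contain a small set $A$ with $\dichi(A)\ge m$. The domination hypothesis is then used in a delicate way: assuming some $Y=\bigcup_{s\in S}N^+(s)$ (for a subset $S\subseteq W$) had small dichromatic number, one would partition $Y$ into few acyclic pieces, greedily strip each piece of the out-closed-neighbourhoods of up to $r$ sources to obtain $F$-free acyclic subsets, dominate those with at most $g(r,k,H)$ vertices each, and thereby assemble a dominating set for $D$ smaller than $B$ --- a contradiction. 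This forces $\dichi(Y)$ to be large, the inductive hypothesis yields a small high-$\dichi$ set $A_S$ inside it, and the final witness set is $W\cup A\cup\bigcup_S A_S$. The argument that this set has dichromatic number $\ge m+1$ is the monochromatic-triangle trick: any $m$-colouring yields a monochromatic $S\subseteq W$ by pigeonhole (thanks to the choice of $d$), and then vertices $a\in A$, $a'\in A_S$ of the same colour together with an in-neighbour $s\in S$ of $a'$ form a monochromatic cyclic triangle. None of this machinery --- the induction on $m$, the minimum dominating set of $D$, the sets $W$, $A$, $A_S$, the Ramsey-style construction of the witness --- appears in your proposal, and your colour-class framing is orthogonal to it. Reproducing ``Claim 2.4 almost verbatim'' is indeed the right instinct, but the proposal has not actually done so, and the point where you stop is exactly the heart of the proof.
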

\begin{proof}
    We proceed by induction on $m$ (from the definition of tamed). The case when $m=1$ is immediate. Assume the statement holds for $m$. Let $M$ and $l$ be the corresponding integers. Let $c$ be an integer such that $\{rK_1+F, H \}$-free digraphs $D$ have $\dichi(D)\leq c$, and let $b$ be an integer such that $\{(r-1)K_1+F, \Delta(1, 1, H) \}$-free digraphs $D$ have $\dichi(D)\leq b$. Since $F$ dominates, let $g(r, k, H)$ be the associated function. Furthermore, let $p=M+bl+kl+1$, and let $d=m((g(r, k, H)+r)p+1)+1$. Note that, by the pigeonhole principle, $d$ is the smallest number such that if a set $S$ of size $d$ is $m$-coloured, then there exists a monochromatic susbset of size at least $(g(r, k, H)+r)p+2$. We claim that the statement holds for $m+1$ when $M' = \max\{g(r, k, H)+1, kd, M+d(b+k+1)\}$ and $l' = d + l + l{d \choose (g(r, k, H)+r)p+2}$.

    Assume that $D$ is a $\{\Delta(1, 1, H), rK_1+F \}$-free $k$-local digraph, and assume $\Vec{\chi}(D)\geq M'$. We start with the following claim.

    \sta{\label{tame1}
    $\dichi(N^0(v))\leq b$ for every $v\in V(D)$.
    }
\begin{subproof}
    Since $D$ is $\{rK_1+F, \Delta(1, 1, H)\}$-free, it follows that $N^0(v)$ is $\{(r-1)K_1+F, \Delta(1, 1,$ $H)\}$-free, so the claim follows by definition of $b$. 
\end{subproof}
    
    Since $\dichi(D)\geq M'$, we have $\Vec{\chi}(D)>g(r, k, H).$ Let $B$ be a minimum dominating set for $D$. Since $D$ is $k$-local, it follows that $\Vec{\chi}(D)\leq |B|k$, so $|B|\geq M'/k\geq d$. Pick $W\subseteq B$ such that $|W|= d$. By the choice of $M'$ and the size of $B$, we know this subset exists. Notice that $\dichi(\bigcup_{w\in W}N^0(w))\leq bd$ by (\ref{tame1}), and $\Vec{\chi}(\bigcup_{w\in W}N^+(w))\leq kd$ since $D$ is $k$-local. Since $\dichi(D\setminus W)\geq M'-d$, it follows that the set $\mathcal{A}$ of vertices out-complete to $W$ has dichromatic number at least $M'-d-bd-kd\geq M.$ By the definition of $M$, there exists a set $A$ out-complete to $W$ of size at most $l$ and dichromatic number at least $m$.

    We will define a set $A_S$ for every subset $S$ of $W$ of size $(g(r, k, H)+r)p+2$ as follows. Let $S$ be such a set, and let $Y = \bigcup_{s\in S}N^+(s)$. For a contradiction, assume that $\dichi(Y)\leq p$. Let $Y_1, \dots, Y_p$ be a partition of $Y$ into $p$ acyclic sets. For each set $Y_i$, pick a vertex $y_i^1$ with no in-neighbours. Having picked vertex $y_i^j$ for some $1\leq j\leq r-1$, pick another vertex $y_i^{j+1}$ in $Y_i\setminus \bigcup_{k\leq j}N^+[y_i^k]$ (unless this set is empty)
    with no in-neighbours in $Y_i\setminus \bigcup_{1\leq k\leq j}N^+[y_i^k]$. Then, for every $i$, the vertices $y_i^1, \dots, y_i^{r}$ form a stable set, and so the set $Y_i' = Y \setminus \bigcup_{1\leq k\leq r}N^+[y_i^k]$ is acyclic and $F$-free. Since $\dichi(D) > g(r, k, H)$, there exists a dominating set $Z_i$ for $Y_i'$ of size at most $g(r, k, H)$, so the set $Z_i' = Z_i\cup \{y_i^1, \dots, y_i^r \}$ is a dominating set for $Y_i$ of size at most $g(r, k, H)+r$. 
    
    Thus, the set $Z = Z_1'\cup \dots \cup Z_p'$ is a dominating set for $Y$ of size at most $(g(r, k , H)+r)p$. Adding a vertex $z$ from $A$, we get a dominating set for $N^+[S]$ of size at most $(g(r, k, H)+r)p+1$. Then $(B\setminus S)\cup Z\cup\{z \}$ is a dominating set for $D$ of size at most $|B|-1$, contradicting that $B$ is a smallest dominating set. Thus, $\dichi(Y) > p$.

    Because $|A|\leq l$, by (\ref{tame1}), and by the fact that $D$ is $k$-local, we have
    $$
    \Vec{\chi}(N^0(A)\cap Y)\leq bl,
    $$
    and 
    $$
    \Vec{\chi}(N^+(A)\cap Y)\leq kl.
    $$
    Thus, the set $A'$ of vertices of $Y$ out-complete to $A$ has dichromatic number at least $p-bl-kl>M$, which implies by the inductive hypothesis that $A'$ contains a set $A_S$ with $\Vec{\chi}(A_S)\geq m$ and $|A_S|\leq l$. This is how we define $A_S$ for every subset $S$ of $W$ where $|S|= (g(r, k, H)+r)p+2$. Figure \ref{figures-heroes-1} illustrates this process.

\begin{figure}
    \centering
    \includegraphics[scale=0.9]{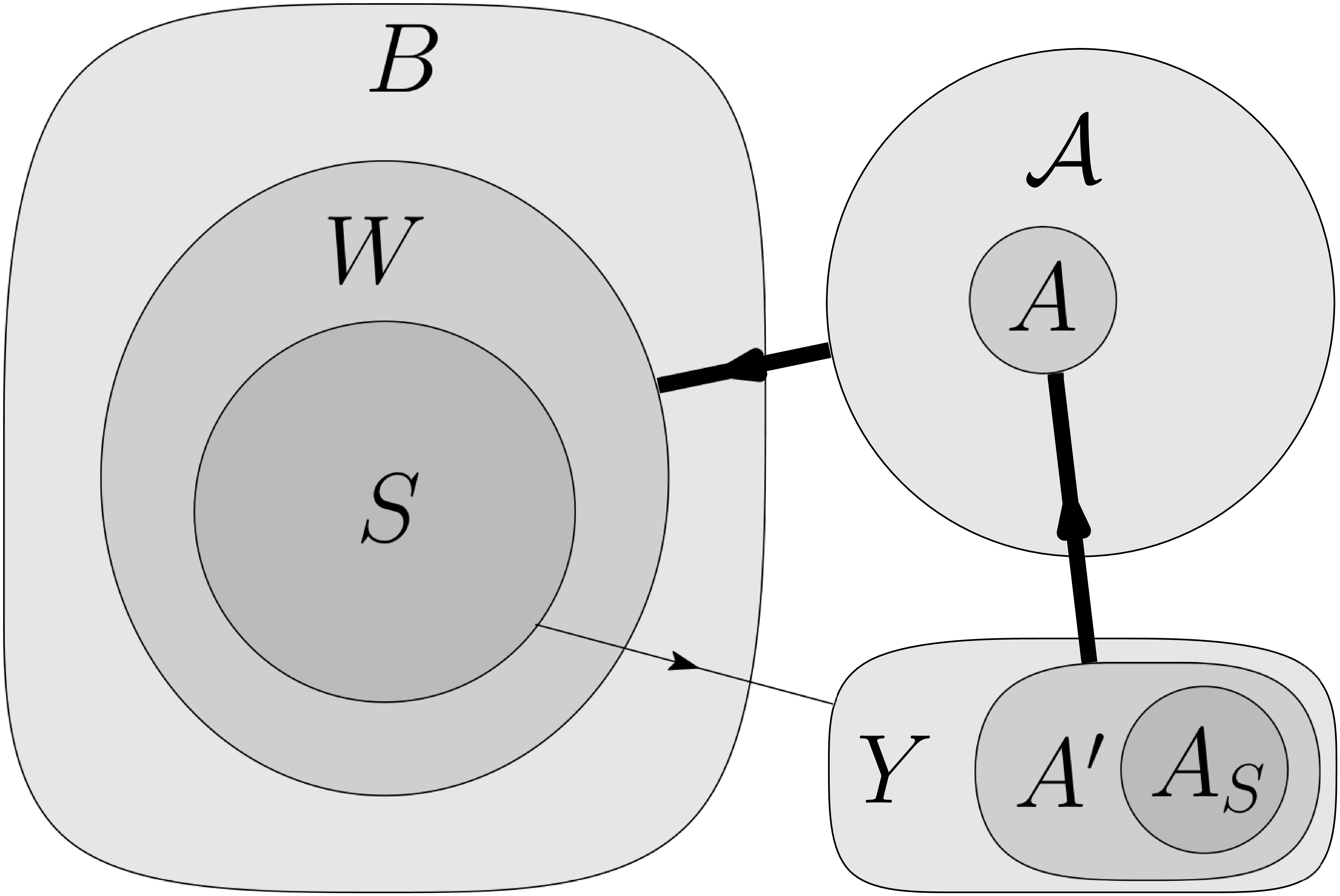}
    \caption{Illustration of the proof of Lemma \ref{tame2}.}
    \label{figures-heroes-1}
\end{figure}

    Finally, take
    $$
    V := W\cup A\cup \bigcup A_S.
    $$
    where the union happens over all subsets $S$ of $W$ of size exactly $(g(r, k, H)+r)p+2$. This set has size at most $d + l + l{d \choose g(r, k, H)p+2} = l'.$  By the definition of $d$, every $m$-colouring $f$ of $V$ contains a monochromatic set $S\subseteq W$ of size $g(r, k, H)p+2$. Let $f(S) = \{\gamma \}$. Since $\dichi(A), \dichi(A_S)\geq m$, it follows that there exists $a\in A$ and $a'\in A_S$ with $f(a)=f(a')=\gamma$. Now let $s\in S$ be an in-neighbour of $a'$ (which exists since $A_S\subseteq N^+(S)$). It follows that $\{a, a', s \}$ is a cyclic triangle monochromatic under $f$. Since $f$ was an arbitrary $m$-colouring, this argument applies to every $m$-colouring of $V$. We conclude that $\Vec{\chi}(V)\geq m+1$, and so $V$ is the desired set for $m+1$, finishing the inductive argument. \end{proof}

The following is analogous to the proof of Theorem 2.3 in \cite{dense-digraphs}.

\begin{lemma}\label{k_2_1}
    If $F$ dominates, then $F$ is localized.
\end{lemma}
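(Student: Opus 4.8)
The goal is to show that if $F$ dominates, then $F$ is localized, i.e., given that $\Delta(1,1,H)$ is a hero in $\{(r-1)K_1+F\}$-free digraphs and $H$ is a hero in $\{rK_1+F\}$-free digraphs, then for every fixed $k$, $\Delta(1,1,H)$ is a hero in $k$-local $\{rK_1+F\}$-free digraphs. The natural strategy — mirroring Theorem 2.3 in \cite{dense-digraphs} — is to combine Lemma \ref{tame2} (which says the class of $\{\Delta(1,1,H), rK_1+F\}$-free $k$-local digraphs is tamed) with Lemma \ref{partition-dichi} (or directly with Lemma \ref{not-ours} and the domination property). First I would fix an arbitrary $\{\Delta(1,1,H), rK_1+F\}$-free $k$-local digraph $D$ and argue by contradiction: suppose $\dichi(D)$ is huge. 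Since the class is tamed, there are constants $M,l$ (depending only on $r,k,H,F$ and on a target value $m$ to be chosen) so that if $\dichi(D) \geq M$ then $D$ has an induced subgraph on at most $l$ vertices with dichromatic number at least $m$. The key point is that a digraph on at most $l$ vertices can have dichromatic number at most $l$, so taking $m = l+1$ — wait, $l$ itself depends on $m$, so one must be slightly careful: one chooses $m$ first, gets $M, l$ from tamedness with that $m$, and then notes that any set of at most $l$ vertices has dichromatic number at most $l < m$ would be the wrong inequality. Instead the standard trick is: a digraph on $t$ vertices has dichromatic number at most $t$, so if tamedness produces a set of size $\leq l$ with $\dichi \geq m$, we need $m \leq l$ — which is automatic — but to get a \emph{contradiction} we want $m > l$, which fails. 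So the correct reading is that tamedness is used not to derive a contradiction directly but as an input to bounding $\dichi(D)$ via a partition argument.

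**How tamedness feeds into a bound.** The actual route, following \cite{dense-digraphs} Theorem 2.3, is: use the domination hypothesis together with tamedness to build a suitable partition $(X_1,\dots,X_n)$ of $V(D)$ to which Lemma \ref{partition-dichi} (or Lemma \ref{not-ours}) applies. Concretely, one repeatedly peels off, from a vertex of high out-degree-complexity, a structured piece; domination controls the "acyclic $F$-free" cross-sections, and tamedness is what guarantees that if the leftover had large dichromatic number we could extract a small witness of large $\dichi$ which — combined with two more vertices forming a cyclic triangle via the domination structure — would give an induced $\Delta(1,1,H)$, contradicting $D$ being $\Delta(1,1,H)$-free. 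So the skeleton is: (i) set up constants, invoking the fact from \cite{dense-digraphs} that $F$-free acyclic induced subsets have small dominating sets (the domination property, with its function $g(r,k,H)$), and invoke Lemma \ref{tame2} to get tamedness constants; (ii) define an ordering/partition of $V(D)$ by a greedy "peeling" based on where each vertex's out-neighbourhood complexity concentrates, analogous to the zone/bag-chain machinery but simpler because we are in the $k$-local world; (iii) verify the hypotheses of Lemma \ref{partition-dichi} — for each part $X_i$, $\dichi(X_i)$ is bounded (using $k$-locality and the domination/tamedness constants), and whenever there is a back-arc from $X_j$ to $X_i$ the sandwiched union $X_{i+1}\cup\cdots\cup X_j$ has bounded $\dichi$ (this is where one argues that otherwise there is a large-$\dichi$ small set yielding an induced $\Delta(1,1,H)$ together with the two endpoints of the back-arc); (iv) conclude $\dichi(D) = O(1)$ from Lemma \ref{partition-dichi}.

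**The main obstacle.** The hard part is step (iii), specifically showing that a back-arc $uv$ (with $u \in X_j$, $v \in X_i$) forces bounded dichromatic number on the sandwiched region. In the $rK_1 = K_1$ case of \cite{dense-digraphs} this is where domination does the heavy lifting: one removes from the sandwiched region the (bounded-$\dichi$) out-neighbourhood of $u$, the in-neighbourhood of $v$, and the non-neighbourhoods of $u$ and of $v$ (bounded by the $\Delta(1,1,H)$-free hypothesis applied to non-neighbourhoods, as in Claim \ref{non-nbrs}); what remains is out-complete to $u$ and in-complete to $v$, hence any copy of $H$ in it forms an induced $\Delta(1,1,H)$ with $\{u,v\}$, so its $\dichi$ is at most $c$. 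But to even find a copy of $H$ we need the leftover's $\dichi$ to exceed $c$, and we want to conclude the leftover is small — this is precisely where tamedness (Lemma \ref{tame2}) must be invoked: a large-$\dichi$ leftover would contain a small witness of large $\dichi$, still out-complete to $u$ and in-complete to $v$, containing a copy of $H$, contradiction. Balancing these constants correctly — the $r$ greedy "sink-like" vertices $y_i^1,\dots,y_i^r$ needed to make a cross-section $F$-free rather than just $rK_1+F$-free, as in the proof of Lemma \ref{tame2} — is the fiddly bookkeeping, but it is routine given the apparatus already built. I would structure the write-up to quote Lemma \ref{tame2}, Lemma \ref{partition-dichi}, and Lemma \ref{not-ours} as black boxes and spend the effort on the partition construction and the back-arc estimate.
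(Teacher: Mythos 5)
Your proposal overcomplicates this considerably, and in the process misses the one idea that makes the paper's proof short. You correctly intuit that Lemma~\ref{tame2} supplies tamedness and that the output of tamedness (a small set $X$ with large dichromatic number) must somehow feed into a bound on $\dichi(D)$, but you abandon this thread and pivot to a partition/peeling argument through Lemma~\ref{partition-dichi}. That machinery belongs to the proof of Theorem~\ref{main1} (the bag-chain and zone analysis), not to this lemma; reproducing it here would be both circular in spirit and far harder than necessary, since Lemma~\ref{k_2_1} is precisely the ingredient that those later arguments invoke to control zones.

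The missing observation is this: choose $m = k+b+1$ in the definition of tamedness, where $b$ bounds the dichromatic number of $\{(r-1)K_1+F, \Delta(1,1,H)\}$-free digraphs and $k$ is the locality parameter. If $\dichi(D) > M$, tamedness hands you $X \subseteq V(D)$ with $|X| \leq l$ and $\dichi(X) \geq k+b+1$. This $X$ must be a dominating set of $D$: if some $v$ were not dominated, then $X \subseteq N^0(v) \cup N^+(v)$, and $\dichi(N^+(v)) \leq k$ by $k$-locality while $\dichi(N^0(v)) \leq b$ because the non-neighbourhood of $v$ is $\{(r-1)K_1+F, \Delta(1,1,H)\}$-free (exactly as in Claim~\ref{tame1}), giving $\dichi(X) \leq k+b$, a contradiction. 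Once $X$ dominates $D$, $k$-locality immediately gives $\dichi(D) \leq k\,|X| \leq kl$. No partition, no back-arc estimate, no invocation of Lemma~\ref{partition-dichi} is needed; the whole argument is five lines once the right target $m$ is chosen. Your early instinct that ``the key point is that a digraph on at most $l$ vertices has dichromatic number at most $l$'' is a red herring --- the size of $X$ never enters; what matters is that the $\dichi$ budget $k+b$ for a non-dominating small set is exceeded, forcing $X$ to dominate.
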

\begin{proof}
    Assume that 
    \begin{itemize}
    \item $\Delta(1, 1, H)$ is a hero in $\{(r-1)K_1+F\}$-free digraphs, and
    \item $H$ is a hero in $\{rK_1+F\}$-free digraphs.
\end{itemize}
    Let $c$ be an integer such that $\{rK_1+F, H \}$-free digraphs $D$ have $\dichi(D)\leq c$. Furthermore, let $b$ be an integer such that $\{(r-1)K_1+F, \Delta(1, 1, H) \}$-free digraphs $D$ have $\dichi(D)\leq b$. Fix an integer $k\geq 1$. By Lemma \ref{tame2}, $\{rK_1+F, \Delta(1, 1, H) \}$-free $k$-local digraphs are tamed. Let $M$ and $l$ be the corresponding integers following the definition of tameness when $m = k+b+1$. 
    
    Let $D$ be a $\{ rK_1+F, \Delta(1, 1, H)\}$-free $k$-local digraph. To prove that $F$ is localized, it is enough to show that $\dichi(D)\leq \max\{M, lk\}$. Assume that $\dichi(D)>M$. By definition, there exists a set $X\subseteq D$ such that $|X|\leq l$ and $\dichi(X)\geq m$. We claim that $X$ is a dominating set of $D$. Assume for a contradiction that there exists a vertex $v$ not in $\bigcup_{x\in X}N^+(x)$. Consequently, $X\subseteq N^0(v)\cup N^+(v)$. By the definition of $k$ and $b$, it follows that $\dichi(X)\leq k+b$, a contradiction. Thus, $X   $ dominates $D$. But $D$ is $k$-local, so $\dichi(D)\leq kl$ thus finishing the proof. 
    \end{proof}

Now that we have proven that digraphs $F$ that dominate are localized, it only remains to show that $\vec{P_{3}}$ dominates. 

\begin{lemma}
\label{k_2_2}
The digraph $\vec{P_{3}}$ dominates. 
\end{lemma}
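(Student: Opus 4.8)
The plan is to combine a structural description of $\vec{P_3}$-free acyclic digraphs with a reduction of the domination property to a bound on the independence number, in the spirit of the $F=K_1$ argument of Harutyunyan, Le, Newman, and Thomassé.

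\emph{Step 1 (structure of $\vec{P_3}$-free acyclic digraphs).} An induced subdigraph $D[S]$ is acyclic and $\vec{P_3}$-free precisely when its arc relation is a strict partial order: directed cycles are ruled out by acyclicity, and if $u\to v\to w$ is any directed path on three vertices, then $\vec{P_3}$-freeness makes $u$ and $w$ adjacent while acyclicity forces the arc $u\to w$, so the relation is transitive. Let $T$ be the set of sources (minimal elements) of such a $D[S]$. Then $T$ is an antichain, hence a stable set of $D$ (an arc between two sources would give one of them an in-neighbour), and $T$ dominates $S$: for $z\in S\setminus T$, pick an in-neighbour $u\in S$ of $z$ whose longest in-directed path in $D[S]$ is as short as possible; if $u$ had an in-neighbour $w\in S$, then $w\to u\to z$ would give $w\to z$ by transitivity, contradicting the choice of $u$, so $u\in T$ and $u$ sees $z$.

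\emph{Step 2 (reduction).} Fix $r\ge 1$ and assume, as in the definition of ``dominates'', that $\Delta(1,1,H)$ is a hero in $\{(r-1)K_1+\vec{P_3}\}$-free digraphs and $H$ is a hero in $\{rK_1+\vec{P_3}\}$-free digraphs; let $b$ be such that all $\{(r-1)K_1+\vec{P_3},\Delta(1,1,H)\}$-free digraphs have dichromatic number at most $b$. It suffices to produce a function $h=h(r,k,H)$ so that every $\{rK_1+\vec{P_3},\Delta(1,1,H)\}$-free $k$-local digraph $D$ having a stable set of size at least $h$ satisfies $\dichi(D)\le h'$ for some $h'=h'(r,k,H)$; then set $g(r,k,H)=\max\{h,h'\}$. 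Indeed, if $\dichi(D)>g$, then $D$ has no stable set of size $h$, so for every $\vec{P_3}$-free acyclic $S$ the source-antichain $T$ of Step 1 has $|T|<h\le g$ and dominates $S$ in $D$; and if $\dichi(D)\le g$ the first alternative of the definition holds.

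\emph{Step 3 (a large stable set bounds $\dichi$) --- the crux.} Let $U$ be a stable set with $|U|\ge h$. First, $N^0(U)$ is $\{(r-1)K_1+\vec{P_3},\Delta(1,1,H)\}$-free, since adjoining any vertex of $U$ to a copy of $(r-1)K_1+\vec{P_3}$ in $N^0(U)$ would yield $rK_1+\vec{P_3}$; hence $\dichi(N^0(U))\le b$. Next split $N(U)=N^+(U)\cup N^-(U)$ into the set $\mathcal{P}$ of vertices adjacent to ``many'' vertices of $U$ and the set of vertices adjacent to ``few'' vertices of $U$, with the threshold chosen small enough that any $r+2$ vertices of the ``few''-part are together non-adjacent to at least $r$ vertices of $U$; then a copy of $(r-1)K_1+\vec{P_3}$ in the ``few''-part would extend, via $r$ such vertices of $U$, to an induced $rK_1+\vec{P_3}$ in $D$, so the ``few''-part is $\{(r-1)K_1+\vec{P_3},\Delta(1,1,H)\}$-free and has dichromatic number at most $b$. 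It remains to bound $\dichi(\mathcal{P})$, and I expect this to be the main obstacle: a vertex of $\mathcal{P}$ either lies in, or sees into, a positive (depending on $r$) fraction of the out-neighbourhoods $N^+(u)$, $u\in U$, each of which has dichromatic number at most $k$; the plan is to peel off slices of $\mathcal{P}$ of the form $\mathcal{P}\cap N^+(u)$, using $\Delta(1,1,H)$-freeness together with $\{rK_1+\vec{P_3}\}$-freeness (and the largeness of $|U|$) to keep the residual part inside the hero-controlled class, and to show that a number of peeling steps bounded by a function of $r,k,H$ suffices. This iteration, and in particular the termination bound, is where the argument departs most from the $F=K_1$ case, in which there is no ``$\vec{P_3}$'' and hence no set $\mathcal{P}$ to control.
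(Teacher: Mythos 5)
Your Step 1 is correct and essentially matches the paper's Claim~\eqref{s1defn}: in an acyclic $\vec{P_3}$-free set the arc relation is a strict partial order, the sources form a stable set $S_1$, and $S_1$ dominates $S$. Your Step 2 reduction is also logically valid: if you could prove Step 3, then the absence of a stable set of size $h$ in any $D$ with $\dichi(D)>h'$ would force every source-antichain to be small, giving a bounded dominating set for any $\vec{P_3}$-free acyclic $S$.

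The gap is that Step 3, which you yourself flag as ``the crux,'' is not proven, and the sketched peeling of $\mathcal{P}\cap N^+(u)$ has no termination bound: each peel removes a piece of dichromatic number at most $k$, but $|U|\geq h$ vertices $u$ are available and there is no argument controlling how many peels are needed or why the residual stays inside a hero-controlled class. More importantly, the strategic choice behind Step 2 makes the problem strictly harder than it needs to be. You insist on finding a \emph{stable} dominating set of bounded size for $S$ (namely the source set), whereas the paper never does this: in its construction the dominating set is $Z' = X' \cup X'' \cup Y'$, where $X'$ is a copy of $H$ living \emph{outside} $S$, which is in general far from stable. By discarding the vertex-maximal choice of $S$ and the resulting structure on $S_1$ (every vertex outside $S$ has a neighbour in $S$, few non-neighbours in $S_1$, etc.), your reduction loses exactly the leverage the paper uses in Claims~\eqref{smalldichidominating}--\eqref{inneighboursmalldichi} to control the in-neighbour layer $R$ via a cut $S'\subseteq S_1$ chosen so that $\dichi(N^+(S')\cap R)$ falls in a prescribed window. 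That cut argument, with $S''$, $Z$, $Y$, and the final $\Delta(1,1,H)$ contradiction, is the heart of the paper's proof and has no analogue in your sketch. To repair the proposal you would either have to complete Step 3 from scratch with an argument that does not reduce to the domination property being proved (risking circularity, since once domination and hence localization are known, Step 3's hypothesis of a large stable set is vacuous), or abandon the reduction and work, as the paper does, with a maximal $\vec{P_3}$-free acyclic $S$ and a dominating set that is allowed to use vertices from all of $D$.
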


\begin{proof}
Suppose that $D$ is a $k$-local $\{\Delta(1,1,H), rK_{1} + \vec{P_{3}}\}$-free digraph. Suppose that 
\begin{itemize}
\item $\Delta(1,1,H)$ is a hero in $\{(r-1)K_{1} + \vec{P_{3}}\}$-free digraphs; and
\item $H$ is a hero in $\{rK_{1} + \vec{P_{3}}\}$-free digraphs.
\end{itemize}
We set some constants:
\begin{itemize}
\item Let $b$ be an integer such that $\{\Delta(1,1,H), (r-1)K_{1} + \vec{P_{3}}\}$-free digraphs have dichromatic number at most $b$.
\item Let $c$ be an integer such that $\{H, rK_{1} + \vec{P_{3}}\}$-free digraphs have dichromatic number at most $c$. 
\end{itemize}

Let $g(r,k,H) = \max\{4r+5,  b + 1 + k + 2c + (|V(H)|+1)(kr + b), 3r|V(H)|\}$. We will show that either $D$ has dichromatic number at most $g(r,k,H)$, or for every acyclic $\vec{P_{3}}$-free set $S$, there is a dominating set in $D$ for $S$ of size at most $g(r,k,H)$. Suppose for a contradiction that neither of these outcomes holds.

Let $S$ be an acyclic $\vec{P_{3}}$-free set. By possibly adding vertices to $S$, we assume that $S$ is a vertex-maximal acyclic $\vec{P_{3}}$-free set. As $S$ is maximal, all vertices in $V(D) \setminus S$ have a neighbour in $S$. We start by noting that acyclic $\vec{P_{3}}$-free digraphs with small independence number can be dominated with few vertices. 

\sta{
\label{smallindependencenumber}
Suppose that $X$ is an acyclic $\vec{P_{3}}$-free digraph with independence number $q$. Then there is a dominating set $B$ of $X$ contained inside $X$ of at most $q$ vertices. 
}

\begin{subproof}
Let $B\subseteq V(X)$ be a minimal dominating set for $X$, and suppose that $B$ contains at least $q+1$ vertices. As the independence number of $X$ is at most $q$, there is an arc $uv$ in $X[B]$. As $B \setminus \{v\}$ is not a dominating set, there is a vertex $w$ such that $vw \in A(X)$ but $uw \not \in A(X)$. If $wu \not \in A(X)$, then $X$ contains an induced $\vec{P_{3}}$, a contradiction. So $wu \in A(X)$. But then $\{u,v,w\}$ induces a cyclic triangle, contradicting that $X$ is acyclic. 
\end{subproof}

\sta{\label{inneighbourordominating}
If $Y$ is an induced copy of $\vec{P_3}$ in $D \setminus S$, then a vertex in $Y$ has an in-neighbour in $S$.
}

\begin{subproof}
Suppose not. Then, no vertex in $Y$ has an in-neighbour in $S$. Let $S' = S \cap N^{+}(Y)$. As no vertex in $Y$ has an in-neighbour in $S$, the set of vertices $v \in S \setminus S'$ are common non-neighbours of all of the vertices in $Y$. Thus $S \setminus S'$ has independence number at most $r-1$, as otherwise $D$ contains $rK_{1} + \vec{P_{3}}$. Thus there is a dominating set $B$ for $S \setminus S'$ of size at most $r-1$ by \eqref{smallindependencenumber}. But then $Y \cup B$ is a dominating set for $S$ of size at most $r+2$, a contradiction. 
\end{subproof}

\sta{
\label{inneighbourorsmall}
If $X$ is an induced subgraph of $D \setminus S$ and $N^{-}(X) \cap S = \emptyset$, then $\vec{\chi}(X) \leq b$.
}

\begin{subproof}
By \eqref{inneighbourordominating}, we conclude that $X$ is $\vec{P_{3}}$-free. Then, by the definition of $b$, it follows that $\vec{\chi}(X) \leq b$. 
\end{subproof}

Let $S_{1} \subseteq S$ be the set of vertices in $S$ with no in-neighbour in $S$.  

\sta{
\label{s1defn}
The set $S_{1}$ is a stable set, and all vertices in $S-S_{1}$ have an in-neighbour in $S_{1}$.
}
\begin{subproof}
    This is immediate from the fact that $\vec{P_3}$-free acyclic digraphs are directed comparability graphs. We give a self-contained proof for completeness.
    
    The fact that $S_1$ is a stable set follows directly from the definition of $S_1$. Now let $v \in S \setminus S_1$. Let $P$ be a maximal directed (not necessarily induced) path of the form $x_1 \rightarrow x_2 \rightarrow \dots \rightarrow x_t \rightarrow v$ in $D[S]$. We claim that $x_1 \in S_1$. If not, then $x_1$ has an in-neighbour $x_0 \in S$. Since $x_0$ cannot be added to $P$ to make a longer path, it follows that $x_0 \in \{x_2, \dots, x_t, v\}$. But then $D[S]$ has a directed cycle, a contradiction. So $x_1 \in S_1$. Now let $Q$ be a shortest directed path from $x_1$ to $v$. Then $Q$ is an induced path (since $D[S]$ is acyclic); but since $S$ is $\vec{P_3}$-free, it follows that $Q$ has at most one edge; in other words, $x_1v \in A(D)$. Since $v$ was chosen arbitrarily, the claim follows. 
\end{subproof} 

We observe that $S_{1}$ is a dominating set for $S$, and thus $|S_{1}| \geq g(r,k,H) + 1$ by our assumptions. 

\sta{
\label{smalldichidominating}
Let $Q \subseteq V(D) \setminus S$ be the set of vertices such that for each vertex $v \in Q$, we have that the in-neighbours of $v$ in $S$ can be dominated by a set $B$ of at most $r+1$ vertices where $B 
\subseteq S$. Then $\vec{\chi}(Q) \leq b$. 
}

\begin{subproof}

Suppose not. Then $Q$ contains an induced copy $P$ of $\vec{P_{3}}$. Partition $S$ into $(N^{-}(P) \cap S),(N^{0}(P) \cap S),$ and $(N^{+}(P) \cap S)$ (choosing arbitrarily if a vertex is both an in- and out-neighbour of some vertex in $P$). Note this is a partition of $S$: if a vertex is neither an in-neighbour or out-neighbour of a vertex in $P$, then it is a non-neighbour of all of the vertices of $P$. Then the digraph induced by $N^{0}(P) \cap S$ has independence number at most $r-1$, as otherwise $D$ contains $rK_{1} + \vec{P_{3}}$, a contradiction. Thus by \eqref{smallindependencenumber}, there exists a dominating set for $N^{0}(P) \cap S$ of size at most $r-1$. By the assumption, $N^{-}(P) \cap S$ can be dominated by at most $3(r+1)$ vertices. Lastly, $N^{+}(S)$ is dominated by $P$, and thus $S$ can be dominated by at most $3(r+1) + r + 2 = 4r + 5 \leq g(r,k,H)$ vertices, a contradiction. 
\end{subproof}

Let $Q$ be the set of vertices defined as in \eqref{smalldichidominating}. Let $T = D \setminus (S \cup Q)$. As $\vec{\chi}(Q) \leq b$, and $S$ is acyclic, it follows that $\vec{\chi}(T) \geq \vec{\chi}(D) - b- 1$. 

\sta{
\label{fewnonneighboursfirstlayer}
If there exists a vertex $v \in V(T)$ such that $v$ has an out-neighbour in $S_{1}$, then $v$ has at most $r-1$ non-neighbours in $S_{1}$.}

\begin{subproof}
Let $v$ be a vertex in $T$ and suppose that $v$ has at least $r$ non-neighbours in $S_{1}$. Let $u$ be an out-neighbour of $v$ in $S_{1}$. Let $X$ be any set of $r$ non-neighbours of $v$ in $S_{1}$. The set $S \cap N^{-}(v)$ cannot be dominated by $X \cup \{u\}$, as the in-neighbours of $v$ in $S$ cannot be dominated by $r+1$ vertices by the definition of $Q$, and thus there is at least one in-neighbour of $v$ in $S$, say $w$, such that $w$ is not in $N^+(X \cup \{u\})$. Since $N^-(S_1) \cap S = \emptyset$, it follows that $w$ is not adjacent to any vertex in $X \cup \{u\}$. Then $\{w,v,u\} \cup X$ induces an $rK_{1} + \vec{P_{3}}$, a contradiction.  
\end{subproof}

\sta{
\label{nonneighbourfirstlayer}
If $v$ is in $T$, and $v$ has an out-neighbour in $S_{1}$, then the non-neighbours of $v$ can be dominated with at most $\max\{r+1, 2r-1\} \leq 2r$ vertices inside $S$. 
}

\begin{subproof}
Let $u$ be an out-neighbour of $v$ in $S_{1}$. First suppose that $v$ has an in-neighbour in $S_{1}$, say $w$. Then $\{u,v,w\}$ induces a $\vec{P_{3}}$. Let $Y = S \cap N^{0}(\{u, v, w\})$. Then, since $\{u, v, w\}$ induces a copy of $\vec{P_3}$, we have that $Y$ has independence number at most $r$, and thus it follows from \eqref{smallindependencenumber} that $Y$ has a dominating set $X$ of size at most $r-1$. Consequently, the non-neighbours of $v$ in $S$ can be dominated by $X  \cup \{u,w\}$, which is at most $ r-1 +2 = r+1$ vertices. 

Therefore, we may assume that $v$ only has out-neighbours and non-neighbours in $S_{1}$. By \eqref{fewnonneighboursfirstlayer}, $v$ has at most $r-1$ non-neighbours in $S_{1}$. Let $X$ be this set. Let $Y \subseteq S_1 \setminus X$ be minimal with respect to inclusion such that $(S \setminus S_1) \cap N^0(v) \subseteq N^+(X \cup Y)$. This set exists as $S \setminus S_1 \subseteq N^+(S_1)$ by \eqref{s1defn}. Then, if $|Y| \leq r$, the claim holds as $X \cup Y$ is the desired set; so we may assume that $|Y| \geq r+1$. It follows that $Y$ contains $r+1$ distinct vertices, say $y_1, \dots, y_{r+1}$. For each $i \in \{1, \dots, r+1\}$, the set $X \cup (Y \setminus \{y_i\})$ does not dominate $(S \setminus S_1) \cap N^0(v)$, and so there is a vertex $y_i' \in (S \setminus S_1) \cap N^0(v)$ such that $N^-(y_i') \cap (X \cup Y) = \{y_i\}$. But now $\{v, y_1, y_1', y_2', \dots, y_{r+1}'\}$ induces $rK_1 + \vec{P_3}$, a contradiction.  
\end{subproof}

From now on, let $X$ be the set of vertices in $V(D) \setminus S$ with no out-neighbour in $S_{1}$.

\sta{\label{allinneighbours}
Either $|N^{0}(X) \cap S_{1}| \leq r-1$, or $\vec{\chi}(X) \leq b$.}

\begin{subproof}
If not, then $|N^{0}(X) \cap S_{1}| \geq r$, and thus $X$ does not induce a $\vec{P_{3}}$, as otherwise $D$ would contain a copy of $rK_{1} + \vec{P_{3}}$. But then by the definition of $b$, it follows that $\vec{\chi}(X) \leq b$, a contradiction. 
\end{subproof}

Since $D$ is $k$-local, \eqref{allinneighbours} implies that $\dichi(X) \leq \max\{b, rk\}$ (because if $|N^{0}(X) \cap S_1| \leq r-1$, then choosing $r$ vertices in $S_1$ yields a dominating set for $X$). In addition, every vertex $v$ in $T \setminus X$ has an out-neighbour in $S_1$, and thus, by \eqref{nonneighbourfirstlayer}, we have $|S_1 \cap N^0(v)| \leq r-1$. 

\sta{\label{inneighboursmalldichi}Let $R$ be the set of vertices in $T \setminus X$ that have an in-neighbour in $S_{1}$. Then $\dichi(R) \leq (r-1)|V(H)|k  + (k + c) + (b+k)|V(H)|$.}

\begin{subproof}
Suppose not. By removing one vertex at a time from $S_{1}$, we create a subset $S'$ of $S_{1}$ such that 
$$\dichi(R) - (k+c) \leq \dichi(N^+(S') \cap R) < \dichi(R) - c$$
(which is possible as $D$ is $k$-local and $\dichi(R) > c$). Let $Z =  R \setminus N^+(S')$. Then $\dichi(Z) > c$, and it follows that there exists a copy $X'$ of $H$ in $Z$. 

Let $S''$ be the set of vertices $s \in S'$ such that $s$ is a neighbour of every vertex in $X'$, and note that from the definition of $Z$, we have that $s$ is an out-neighbour of every vertex in $X'$ in this case. It follows that $X'$ is out-complete to $S''$. As every vertex in $X' \subseteq T \setminus X$ has at most $r-1$ non-neighbours in $S'$, we have that $|S''| \geq |S'| -(r-1)|V(H)|$ (and thus implying $S''$ is non-empty). Let $Y = N^+(S'') \cap R$.  Then, as $D$ is $k$-local and from the choice of $S'$, it follows that 
\begin{align*}
    \vec{\chi}(Y) &\geq \vec{\chi}(N^+(S') \cap R) - (r-1)|V(H)|k \\ &\geq \vec{\chi}(R) -(r-1)|V(H)|k  - (k + c) \\ &> b|V(H)| + k|V(H)|.
\end{align*} 
 Let $A = \bigcup_{x \in X'}N^{0}(x) \cap Y$ and $B = \bigcup_{x \in X'}N^{+}(x) \cap Y$. As $N^{0}(x)$ is $\{(r-1)K_{1} + \vec{P_{3}}\}$-free for every $x \in D$, we have that $\vec{\chi}(A) \leq b|V(H)|$; and $\vec{\chi}(B) \leq k|V(H)|$ as $D$ is $k$-local. 
Thus $\vec{\chi}(Y\setminus (A \cup B)) \geq \vec{\chi}(Y) - b|V(H)| - k|V(H)| \geq 1$ and therefore $Y' = Y \setminus (A\cup B)$ is not empty. Let $y \in Y'$ and $s \in S''$ be an in-neighbour of $y$ in $S''$, which exists by the definition of $Y$. By definition, $s$ is in-complete from $X'$, and $X'$ is in-complete from $y$. Thus the set $\{s,y\} \cup X'$ induces a $\Delta(1,1,H)$, a contradiction. 
\end{subproof}

Putting this all together, as $D$ has large dichromatic number, by \eqref{inneighboursmalldichi} and since $\dichi(T) \geq \dichi(D) - b - 1$, it follows that the set of vertices $U = T \setminus (X \cup R)$ (where $R$ is defined as \eqref{inneighboursmalldichi}) with only out-neighbours and non-neighbours in $S_{1}$  has dichromatic number at least 
$$\dichi(U) \geq \dichi(D) - b - 1 - \max\{b, rk\} - k - c - |V(H)|(kr + b) > c.$$ 
As $D[U]$ has dichromatic number more than $c$ and is $\{rK_{1} + \vec{P_{3}}\}$-free, it contains a copy $X'$ of $H$. Since $U \subseteq T \setminus X$, and from the definition of $X$, it follows that each vertex of $X'$ has an out-neighbour in $S_1$, and therefore, by \eqref{fewnonneighboursfirstlayer}, at most $r-1$ non-neighbours in $S_{1}$. Let $Y'$ be the set of vertices in $S_1$ with a non-neighbour in $X'$. Then $|Y'| \leq (r-1)|V(H)|$. Moreover, by \eqref{nonneighbourfirstlayer}, there is a set $X''$ of at most $2r|V(H)|$ vertices in $S$ such that $X''$ dominates the set of all vertices in $S$ with a non-neighbour in $X'$. 

If $Z' = X' \cup X'' \cup Y'$ is a dominating set for $S$, then it has size at most $3r|V(H)| \leq g(r,k,H)$, a contradiction. Therefore, there is a vertex $s$ such that:
\begin{itemize}
\item $s \in S$ is not an out-neighbour and not a non-neighbour of any vertex in $X'$, so $s$ is out-complete to $X'$; in particular, $s \not\in S_1$; and
    \item $s \in S \setminus N^+(Y')$, and so, since $S_1$ is a dominating set for $S$, it follows that $s$ has an in-neighbour $s'$ in $S_1 \setminus Y'$. As $s' \not\in Y'$, it follows that $X'$ is out-complete to $s'$.
\end{itemize}
But now $s$, $s'$ and $X’$ form a copy of $\Delta(1, 1, H)$, a contradiction.
\end{proof}

\noindent\textbf{Proof of Theorem \ref{main2}.} By Theorem \ref{main1}, it suffices to show that $\vec{P_{3}}$ cooperates, is localized, and is colocalized. It follows from Theorem \ref{thm:p3} that $\vec{P_{3}}$ cooperates. By Lemma \ref{k_2_2}, $\vec{P_{3}}$ dominates, so by Lemma \ref{k_2_1} $\vec{P_{3}}$ is localized. By Lemma \ref{fix}, $\vec{P_{3}}$ is colocalized as well, thus finishing the proof. 

\section{Forbidding brooms}\label{sec:brooms}

In this section, we prove Theorem \ref{dichi-main1}, which we restate for the reader's convenience.

\dichimainone*

As mentioned in the introduction, we follow the technique designed by Cook, Masařík, Pilipczuk, Reinald, and Souza \cite{p4-dichi-bounded} to prove that if $P$ is an orientation of $P_4$, then $P$-free digraphs are $\dichi$-bounded. We will need a lemma about so called $k$-nice sets. A set $S\not = \emptyset$ is $k$-nice if there exists a partition $S_1, S_2$ of $S$ such that every vertex in $S_1$ (resp. $S_2$) has at most $k$ in-neighbours (resp. $k$ out-neighbours) in $V(D)\setminus S$. Recall that a \textit{hereditary} class of digraphs $\mathcal{C}$ is a class of digraphs such that if $G \in \mathcal{C}$, all induced subdigraphs are in $\mathcal{C}$. 

\begin{restatable}{lemma}{knicesets}
\label{k-nice-sets}
    Let $k\geq 0$, and let $\mathcal{C}$ be a hereditary class of digraphs. If there exists an integer $c$ such that every $D\in \mathcal{C}$ has a $k$-nice set $S$ with $\dichi(S)\leq c$, then $\dichi(D)\leq 2c(k+1)$ for every $D\in \mathcal{C}$.
\end{restatable}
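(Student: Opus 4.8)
The plan is to sidestep any palette‑merging difficulty by first reorganizing $D$ into layers and then colouring the two ``sides'' of the nice‑set partitions completely separately, with disjoint colour sets. Concretely, I would first extract nice sets repeatedly: since $\mathcal{C}$ is hereditary, as long as the remaining digraph is nonempty it lies in $\mathcal{C}$ and hence contains a $k$-nice set of dichromatic number at most $c$, so peeling these off one at a time gives a partition $V(D)=T_1\cup\cdots\cup T_t$ such that, for each $i$, the set $T_i$ is $k$-nice in $D_i:=D[T_i\cup\cdots\cup T_t]$ with $\dichi(D[T_i])\le c$. Write $T_i=T_i^1\cup T_i^2$ for the corresponding partition, so every vertex of $T_i^1$ has at most $k$ in-neighbours in $D$ lying in $T_{i+1}\cup\cdots\cup T_t$, and every vertex of $T_i^2$ has at most $k$ out-neighbours there. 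Let $A=\bigcup_i T_i^1$ and $B=\bigcup_i T_i^2$; these partition $V(D)$, so it suffices to bound $\dichi(D[A])$ and $\dichi(D[B])$ by $c(k+1)$ each, since then colouring $D[A]$ and $D[B]$ with disjoint palettes yields $\dichi(D)\le\dichi(D[A])+\dichi(D[B])\le 2c(k+1)$ (a monochromatic directed cycle would have to lie entirely within $A$ or within $B$).

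The heart of the argument is the following self-contained claim, which I would prove and then apply to $D[A]$ and, after reversing all arcs, to $D[B]$: \emph{if a digraph $D'$ has a partition $V(D')=U_1\cup\cdots\cup U_t$ with $\dichi(D'[U_i])\le c$ for every $i$, and every vertex of $U_i$ has at most $k$ in-neighbours in $U_{i+1}\cup\cdots\cup U_t$, then $\dichi(D')\le c(k+1)$.} To prove it, use the colour set $\{1,\dots,c\}\times\{0,1,\dots,k\}$ and colour the layers in the order $U_t,U_{t-1},\dots,U_1$: for $v\in U_i$ let the first coordinate be $\psi_i(v)$, where $\psi_i$ is a fixed $c$-dicolouring of $D'[U_i]$, and choose the second coordinate in $\{0,\dots,k\}$ to avoid the at most $k$ second coordinates already assigned to in-neighbours of $v$ in later layers (possible since there are $k+1$ options). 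If some directed cycle $C$ were monochromatic, then either $C$ lies inside a single $U_i$, and then $C$ is monochromatic under $\psi_i$, a contradiction; or $C$ meets at least two layers, in which case — since the layer indices around $C$ telescope to a net change of $0$ and so cannot all be weakly increasing unless $C$ stays in one layer — some arc $u\to v$ of $C$ goes from a later layer to an earlier one, making $u$ an already-coloured later-layer in-neighbour of $v$, so $v$ and $u$ received different second coordinates, contradicting monochromaticity.

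Applying this claim to $D[A]$ with $U_i=T_i^1$ gives $\dichi(D[A])\le c(k+1)$, and applying it to $D[B]$ with all arcs reversed and $U_i=T_i^2$ — so that ``in-neighbours'' of $v$ become its original out-neighbours, and $\dichi$ is unchanged by reversal — gives $\dichi(D[B])\le c(k+1)$; combining with the subadditivity above yields $\dichi(D)\le 2c(k+1)$, as desired.

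The main obstacle, and the reason for the layered reformulation, is that one cannot simply take a single $k$-nice set $S$, dicolour $D-S$ inductively, and glue on a $c$-dicolouring of $D[S]$: the up to $k$ stray in-/out-neighbours outside $S$ let monochromatic cycles cross the boundary, and there is no room to fold an extra $c$ colours into the inductive bound $2c(k+1)$. Passing to the global layering fixes this because each vertex of $A$ has only $\le k$ ``dangerous'' in-neighbours, namely those appearing later in the layering, which is exactly what the factor $k+1$ in the palette absorbs, while the $T_i^1/T_i^2$ split lets $A$ and $B$ use entirely disjoint colour sets. The remaining points to check are routine: the iterative extraction terminates because each $T_i$ is nonempty, $\dichi(D[T_i^j])\le\dichi(D[T_i])\le c$ since $T_i^j\subseteq T_i$, and the second-coordinate choice in the claim is always available.
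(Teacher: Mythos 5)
Your proof is correct and uses essentially the same product-palette idea as the paper: one coordinate gives a $c$-dicolouring within each $k$-nice set, a second coordinate with $k+1$ values breaks monochromatic arcs crossing from a later layer to an earlier one, and the in-neighbour/out-neighbour asymmetry is handled by giving the two sides of each nice-set partition disjoint colour ranges. You package this as an explicit layering $T_1,\dots,T_t$ together with a clean auxiliary lemma, whereas the paper folds the same construction into an induction on $|V(D)|$ that peels off one $k$-nice set per step and merges palettes at each stage; the two are the same argument, one unrolled and one rolled up.
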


\begin{proof}
Fix $\mathcal{C}$. We proceed by induction on $|V(D)|$. The statement holds if $|V(D)|=1$. Assume the statement holds for digraphs with fewer than $|V(D)|$ vertices. By the assumption $D$ has a $k$-nice set $S$ with $\dichi(S)\leq c$. Let $S_1$ and $S_2$ be the partitioning of $S$ as in the definition of a $k$-nice set. 

By induction, the digraph induced by $V(D) \setminus S$ has a $2c(k+1)$-dicolouring. Let $f_0:(V(D)\setminus S)\rightarrow \{1, \dots, k+1 \}\times \{1, \dots, 2c\}$ be such a $(2c(k+1))$-dicolouring. Furthermore, let $f_1$ be a $c$-dicolouring of $S_1$ using colours in $\{1, \dots, c\}$, and let $f_2$ be a $c$-dicolouring of $S_2$ using colours in $\{c+1, \dots, 2c\}$. 

We define a function $m:S\rightarrow \{1, \dots, k+1\}$ as follows. Let $u\in S$. If $u\in S_1$, then $u$ has at most $k$ in-neighbours in $V(D)\setminus S$. Thus, $|f_0(N^-(u)\cap (V(D)\setminus S))|\leq k$. Consequently, there exists a number $m(u)$ such that no colour in $f_0(N^-(u)\cap (V(D)\setminus S))$ has $m(u)$ as its first coordinate. We define $m(v)$ when $v\in S_2$ similarly, where we use its out-neighbourhood in $V(D)\setminus S$ instead. Using these, we can define the following colouring.

\begin{align*}
f(v) = \left\{ \begin{array}{cc}
    f_0(v) & \hspace{4mm} \text{if }v\not\in S; \\
    (m(v), f_1(v)) & \hspace{5mm} \text{if }v\in S_1; \\
    (m(v), f_2(v)) & \hspace{5mm} \text{if }v\in S_2. \\
\end{array} \right.
\end{align*}

We claim that $f$ is a $(2c(k+1))$-dicolouring of $D$. The first index of the coordinate has $k+1$ values, and the second index at most $2c$. Thus, this indeed uses at most $2c(k+1)$ colours. For a contradiction, assume that $C$ is a directed monochromatic cycle in $D$. Since $f_0, f_1$ and $f_2$ are dicolourings, $C$ is not contained in neither of the sets $S_1$, $S_2$ and $V(D)\setminus S$. Since $f_1$ and $f_2$ use colours that do not overlap, it follows that $V(C)$ does not intersect both $S_1$ and $S_2$, so $V(C)$ is not contained completely in $S$. By the same reason, if $V(C)$ intersects both $S$ and $V(D)\setminus S$, then $V(C)$ intersects only one of $S_1$ and $S_2$. 

Thus, either $V(C)$ intersects with $V(D)\setminus S$ and $S_1$, or $V(C)$ intersects with $V(D)\setminus S$ and $S_2$. We will only show the first situation leads to a contradiction - the second follows similarly. Assume $V(C)$ intersects with $S_1$. Thus, there is an edge $e = uv$ in $C$ such that $u\in V(D)\setminus S$ and $v\in S_1$. But then, by the definition of $m(v)$, the first coordinate of $f(u)$ is not equal to the first coordinate of $f(v)$, contradicting that $C$ is monochromatic.
\end{proof}

Before we can prove Theorem \ref{dichi-main1} we need to introduce some more tools developed in \cite{p4-dichi-bounded}. For a not strongly connected tournament $K$, let $K_1, \dots, K_k$ be the partition of $V(K)$ into its strongly connected components. Let $K^*$ be the tournament that results from contracting each of these parts into a single vertex each. It follows that digraph $K^*$ has vertices $u^*$ and $v^*$ such that $N_{K^*}^-(u)\cap V(K^*)=\emptyset$ and $N_{K^*}^+(v)\cap V(K^*)=\emptyset$. If $u$ is in the component that got contracted to the vertex $u^*$, then we call $u$ a \textit{source vertex}. If $v$ is in the component that got contracted to the vertex $v^*$, then we call $v$ a \textit{sink vertex}. 

We say $C$ is a \textit{path-minimizing closed tournament} (PMCT) if either $V(C)=K$, where $K$ is a strongly connected tournament with $\omega(D)=|K|$, or $V(C)=K\cup V(P)$ where $K$ is a tournament that is not strongly connected, $\omega(D)=|K|$, and $P$ is a directed path from a sink vertex to a source vertex of $K$. Furthermore, $K$ is picked such that $|V(C)| = |V(K)\cup V(P)|$ is minimized. Notice that if $D$ has a strongly connected tournament on $\omega(D)$ vertices, then every PMCT is a tournament. Otherwise, if $C$ is a PMCT, then $C$ is not a tournament, and $K$ is picked such that $|V(P)|$ is as small as possible.

Eventually, we need to go into four different cases. For that, we will illustrate the different cases that we will have. There are 8 types of orientations to consider that we separate into four types. These are illustrated on Figure \ref{type1}, Figure \ref{type2}, Figure \ref{type3}, and Figure \ref{type4}. Since $\mathcal{B}$ and $\mathcal{B'}$ are of opposing orientation, we may assume that $\mathcal{B}$ is of type 1 or type 3, and that $\mathcal{B}$ is of type 2 or type 4, giving four cases.

\begin{figure}
\centering
\begin{subfigure}{.5\textwidth}
  \centering
  \includegraphics[width=.4\linewidth]{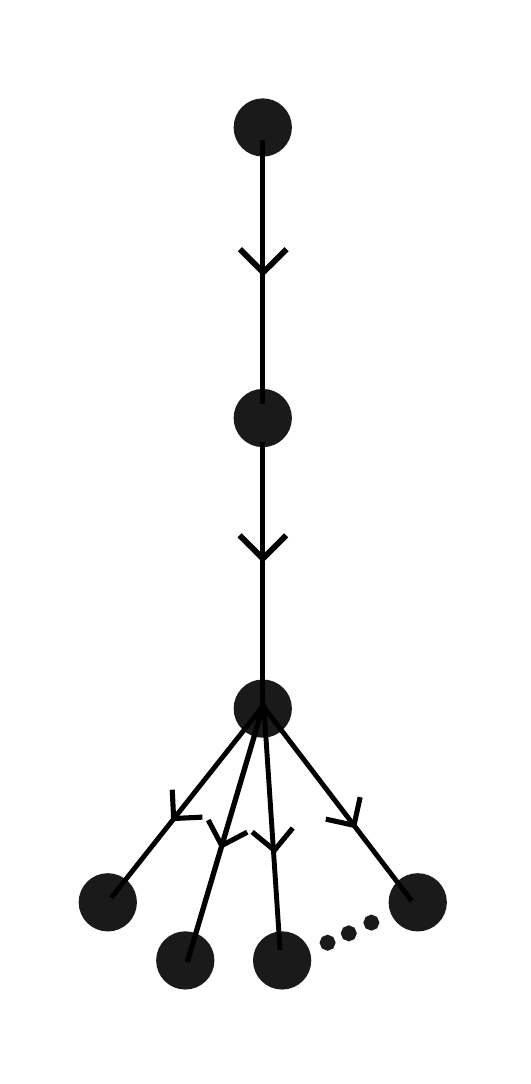}
  \includegraphics[width=.4\linewidth]{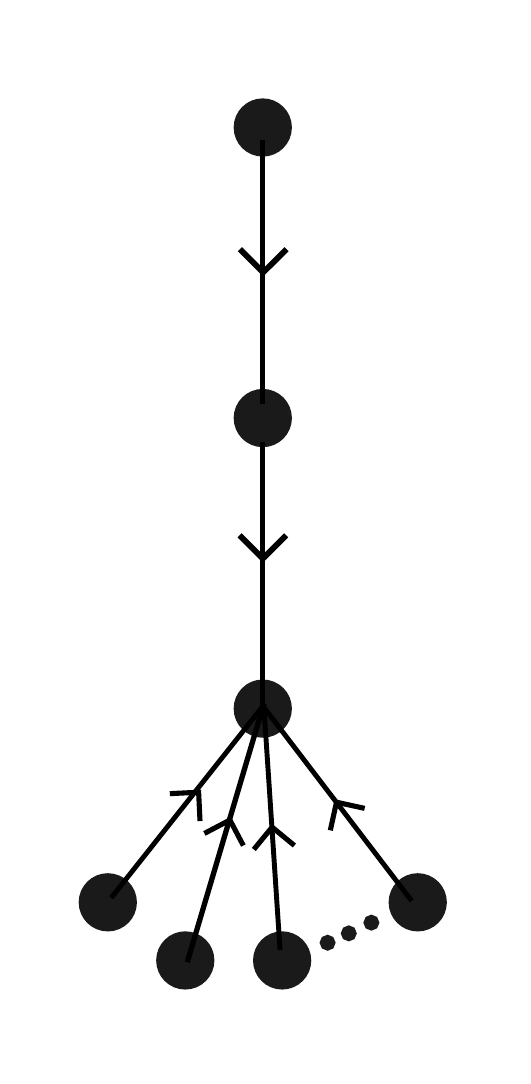}
  \caption{Type 1 brooms.}
  \label{type1}
\end{subfigure}%
\begin{subfigure}{.5\textwidth}
  \centering
  \includegraphics[width=.4\linewidth]{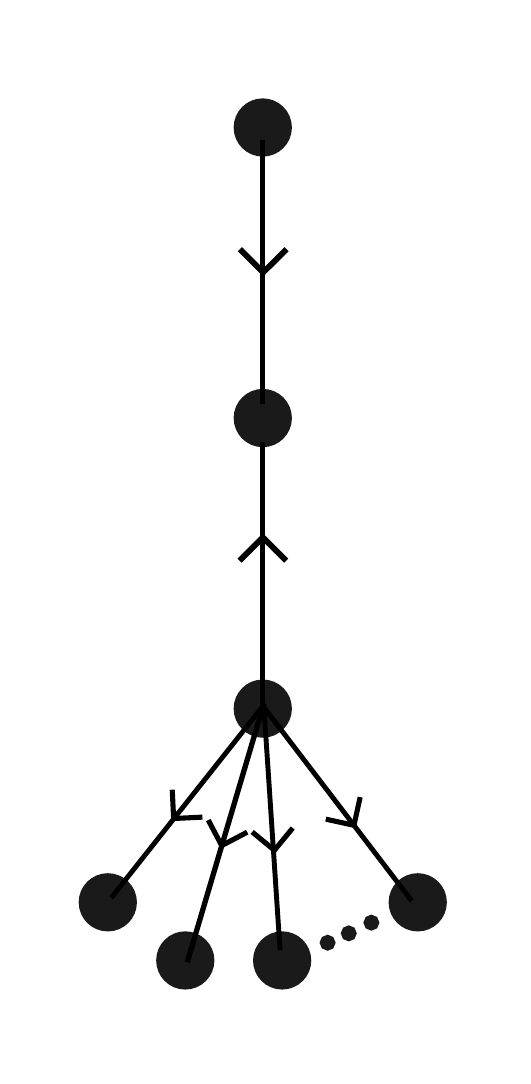}
  \includegraphics[width=.4\linewidth]{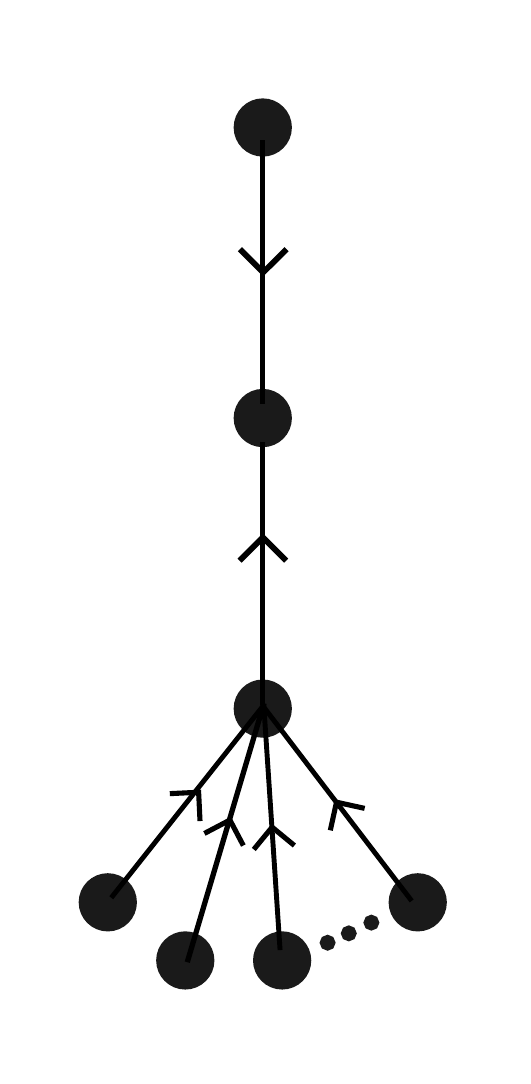}
  \caption{Type 2 brooms.}
  \label{type2}
\end{subfigure}
\label{fig1}
\caption{Type 1 and type 2 brooms.}
\end{figure}


\begin{figure}
\centering
\begin{subfigure}{.5\textwidth}
  \centering
  \includegraphics[width=.4\linewidth]{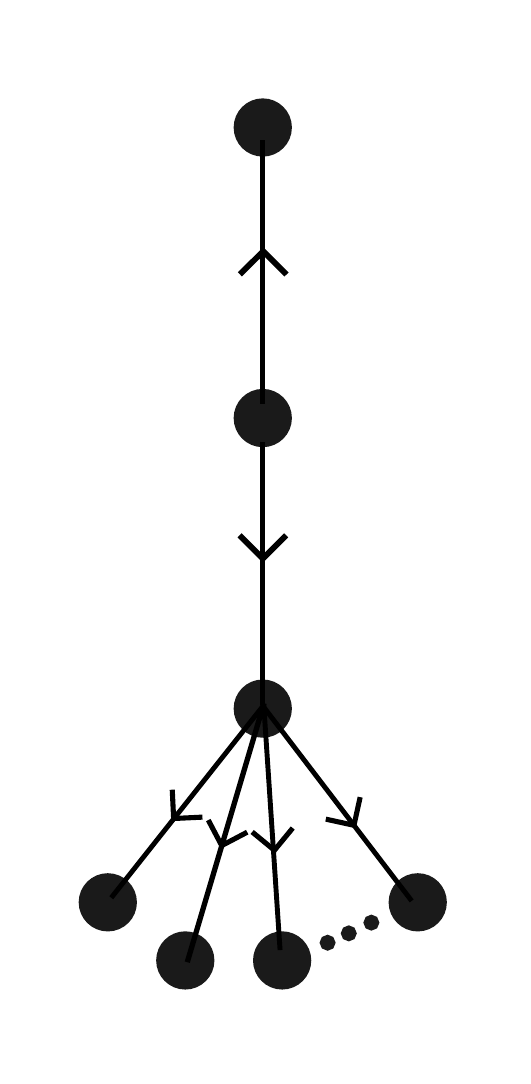}
  \includegraphics[width=.4\linewidth]{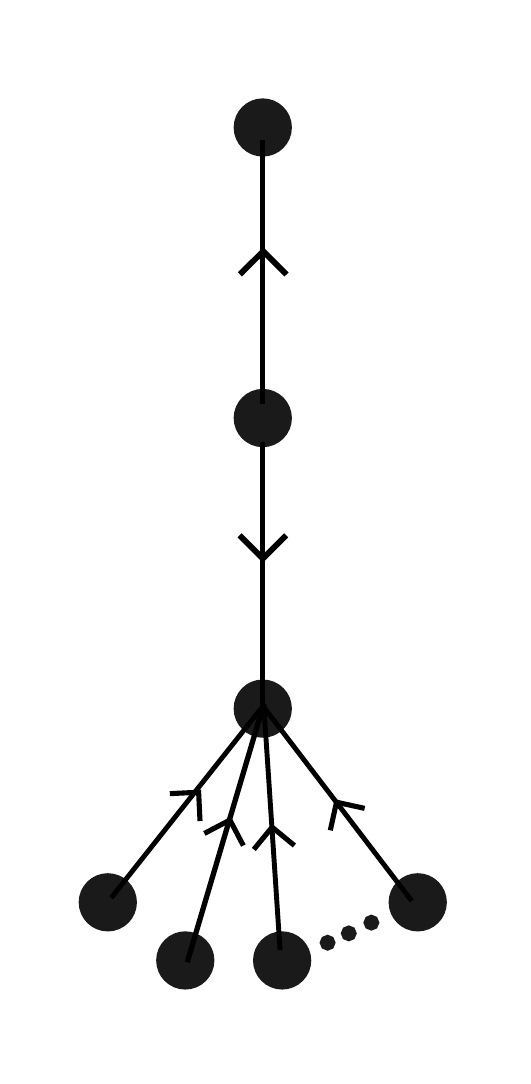}
  \caption{Type 3 brooms.}
  \label{type3}
\end{subfigure}%
\begin{subfigure}{.5\textwidth}
  \centering
  \includegraphics[width=.4\linewidth]{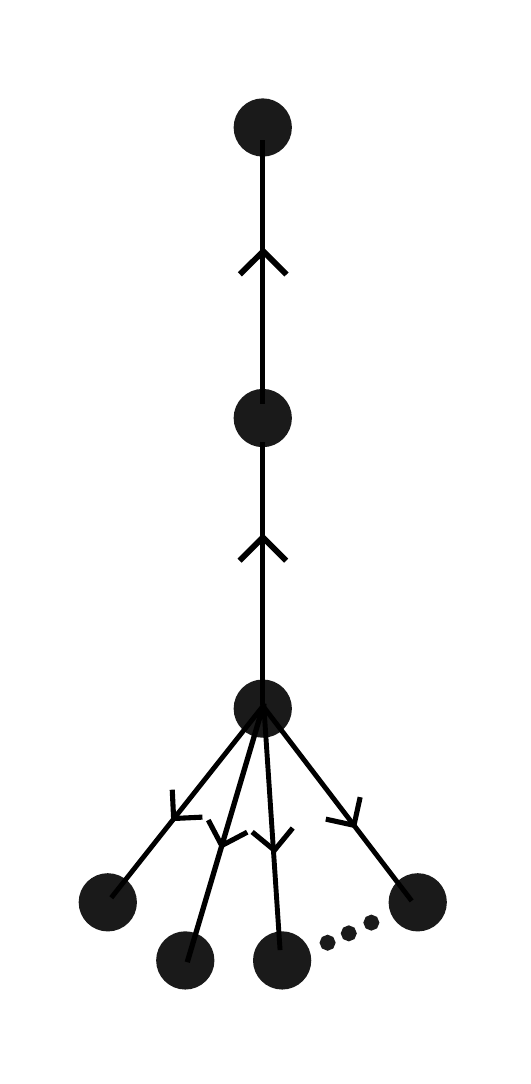}
  \includegraphics[width=.4\linewidth]{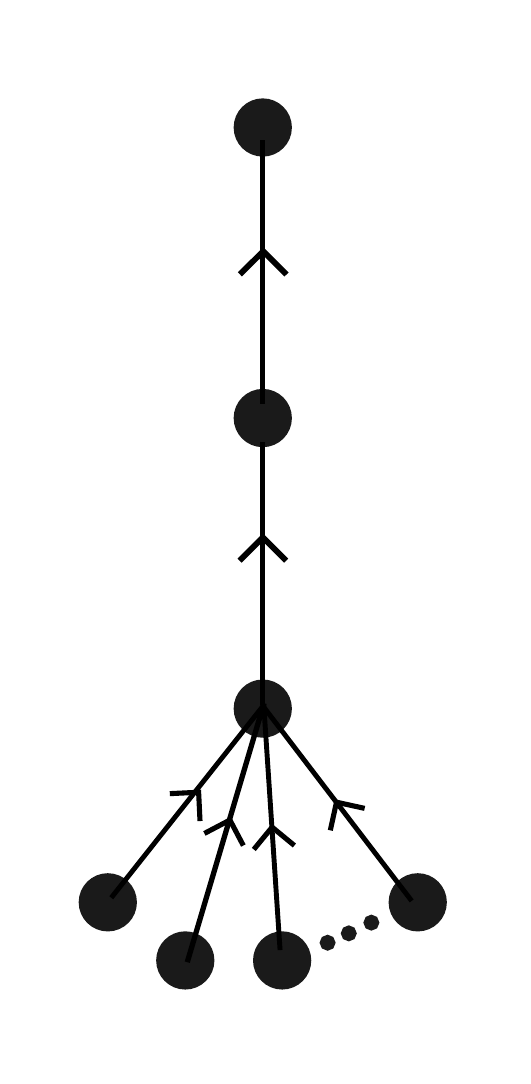}
  \caption{Type 4 brooms.}
  \label{type4}
\end{subfigure}
\label{fig2}
\caption{Type 3 and type 4 brooms.}
\end{figure}

\noindent\textbf{Proof of Theorem \ref{dichi-main1}}: Let $\mathcal{C}$ be the set of $\{\mathcal{B}, \mathcal{B'}\}$-free digraphs. To prove that $\mathcal{C}$ is $\dichi$-bounded, we proceed by induction on $\omega(D)$. The result is immediate if $\omega(D) = 1$. For a digraph $D$, assume that the statement holds for every $\omega<\omega(D)$. That is, assume that there exists a number $\gamma$ such that if $\omega(D')<\omega(D)$ and $D'$ is $\{\mathcal{B}, \mathcal{B}\}$-free, then $\dichi(D')\leq \gamma$. Finally, let $k = \max\{R(r, \omega(D)), R(s, \omega(D)) \}$ where $R$ is the graph Ramsey number. We want to prove that 
$$
\dichi(D)\leq 2(\omega(D)(\gamma+1)+\gamma(6k+25)+2)(k+1).
$$

We may assume that $D$ is strongly connected as the strongly connected components of a digraph can be coloured independently. Let $C$ be a PMCT, which exists since $D$ is strongly connected. Let $X$ be the set of vertices $v\not\in C$ such that $v$ has an in-neighbour and an out-neighbour in $C$, $Z=N(V(C))\setminus X$, and $Y=N(X)\setminus N[V(C)]$.

\sta{\label{dichi-ramsey}
If $S$ is a set of vertices in $D$ such that $|S|\geq k$, then $S$ contains a stable set of size at least $\max\{r,s \}$.
}
\begin{subproof}
    The proof is immediate from the definition of the graph Ramsey number.
\end{subproof}

The following is the analog of the proof of Lemma 3.1 from \cite{p4-dichi-bounded}.

\sta{\label{dichi-nice-set}
$N[C\cup X]$ is a $k$-nice set.
}
\begin{subproof}
We want to prove that if $v\in N[C\cup X]$, then either $v$ has at most $k$ in-neighbours in $V(D)\setminus N[C\cup X]$, or $v$ has at most $k$ out-neighbours in $V(D)\setminus N[C\cup X]$. For this purpose, notice that if $v\in C\cup X$, then the result follows immediately.

For a contradiction, assume that there exists a vertex $v\in N(C\cup X)$ such that $v$ has at least $k$ in-neighbours and out-neighbours not in $N[C\cup X]$. Let $S^-:=N^-(v)\setminus N[C\cup X]$ and $S^+:=N^+(v)\setminus N[C\cup X]$. Either $v\in Y$ or $v\in Z$. If $v\in Y$, then by the definition of $Y$, there exists $x\in X$ such that $x$ is a neighbour of $v$. Since $x\in X$, there exists vertices $c_1, c_2\in C$ such that $c_1x, xc_2\in A(D)$. Note that as $v\in Y$, $v$ is non-adjacent to $c_1, c_2$. Furthermore, notice that $\{x, c_1, c_2 \}$ is anticomplete to $S^-\cup S^+$. Since $\mathcal{B}$ and $\mathcal{B'}$ have opposing orientations, both cases $xv\in A(D)$ and $vx\in A(D)$ each imply that there exists a copy of $\mathcal{B}$ or $\mathcal{B'}$ in $\{c_1, c_2, x, v \}\cup S^-\cup S^+$. Since $D$ is $\{\mathcal{B}, \mathcal{B'} \}$-free, we conclude $v\not\in Y$.

It follows that $v\in Z$. Since $v\not\in X$, $v$ has either only in-neighbours or only out-neighbours in $C$. Furthermore, since $C$ contains a clique of maximal size, $N^0(v)\cap C$ is nonempty. Thus, since $C$ is strongly connected, there is an arc from $N^0(v)\cap C$ to $N(v)\cap C$, and an arc from $N(v)\cap C$ to $N^0(v)\cap C$. Let these arcs be $x_1y_1$ and $y_2x_2$, respectively. Note that $\{x_1, x_2, y_1, y_2 \}$ are anti-complete to $S^+\cup S^-$. As before, both cases where $v$ has only in-neighbours in $C$ or out-neighbours in $C$ imply that the set $\{x_1, x_2, y_1, y_2, v\} \cup S^-\cup S^+$ contain a copy of $\mathcal{B}$ or $\mathcal{B'}$. Since $D$ is $\{\mathcal{B}, \mathcal{B'} \}$-free, both lead to contradictions. We conclude $N[C\cup X]$ is a $k$-nice set.
\end{subproof}

By using Lemma \ref{k-nice-sets}, it is enough to bound $\dichi(N[C\cup X])$. If $C$ is a strongly connected tournament, then we consider $P$ to be the empty path. As noted by Cook, Masařík, Pilipczuk, Reinald, and Souza \cite{p4-dichi-bounded},
$$
\dichi(N[C\cup X])\leq \dichi(N[K]) + \dichi(P)+\dichi(N(P)\setminus N[K]) + \dichi(Y).
$$

\begin{figure}
    \centering
    \includegraphics{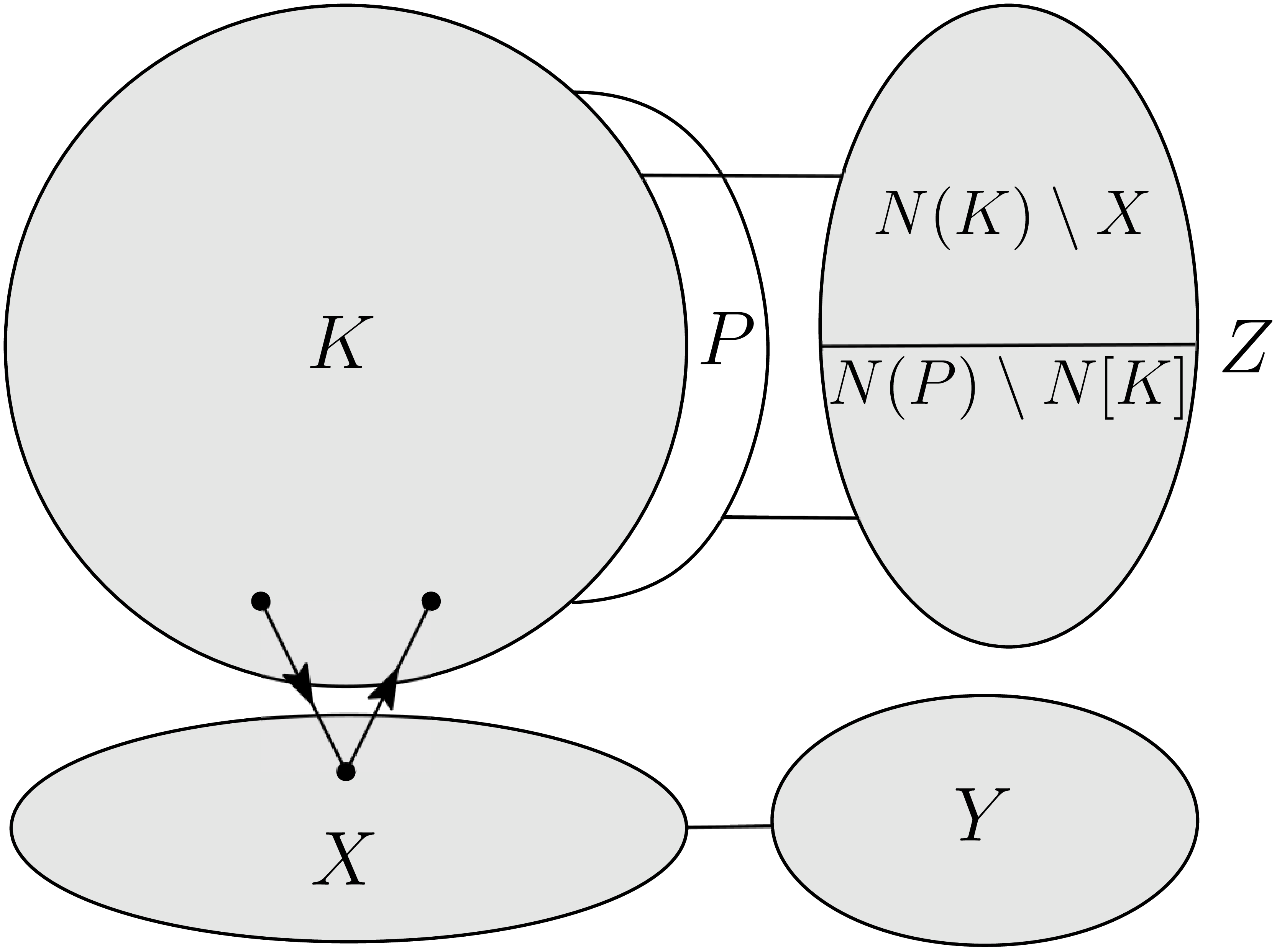}
    \caption{An illustration of $N[C\cup X]$.}
    \label{dichi-figures-nice}
\end{figure}

For an illustration of $N[C\cup X]$, see Figure \ref{dichi-figures-nice}. Thus, we want to bound each of these. By the minimality of $|V(P)|$ and by Observation 4.1 in \cite{p4-dichi-bounded}, we have $\dichi(P)\leq 2$. Furthermore, since $\dichi(N(v))\leq \gamma$ for every $v\in V(D)$ by the definition of $\gamma$, we have $\dichi(N[K])\leq \omega(D)+\omega(D)\gamma = \omega(D)(\gamma+1)$. We proceed to bound $\dichi(Y)$. The following is the analog of Lemma 4.3 and Corollary 4.4 in \cite{p4-dichi-bounded}. However, we use $k$-nice sets to get brooms rather than paths.

\sta{\label{dichi-bound-1}
$\dichi(Y)\leq 2\gamma(k+1)$.
}
\begin{subproof}
    We proceed by proving that every non-empty induced subgraph $Y'$ of $Y$ has a $k$-nice set $S$ such that $\dichi(S)\leq \gamma$, which finishes the proof by Lemma \ref{k-nice-sets}. The statement is true for $Y'=\emptyset$, so we may assume $Y'$ is not empty. 

    By the definition of $Y$, there exists a vertex $x\in X$ such that $N(x)\cap Y'\not=\emptyset$. By the definition of $X$, there exists vertices $c_1, c_2\in C$ such that $c_1x, xc_2\in A(D)$. Set $S = N(x)\cap Y'$. By the definition of $\gamma$, we get $\dichi(S)\leq \gamma$. It suffices to prove that $S$ is a $k$-nice set. For a contradiction, assume that there exists a vertex $s\in S$ which has at least $k$ in-neighbours and out-neighbours in $Y'\setminus N(x)$. Let $S^-:=N^-(s)\cap (Y'\setminus N(x))$ and $S^+:=N^+(s)\cap (Y'\setminus N(X))$. If $xs\in A(D)$, then $\{c_1, x, s \}\cup S^+$ induces a subgraph that contains a copy of $\mathcal{B'}$ by Claim (\ref{dichi-ramsey}). A similar argument works if $sx\in A(D)$. This proves that $S$ is a $k$-nice set, thus finishing the proof.
\end{subproof}

It remains to bound $\dichi(N(P)\setminus N[K])$. How we do it mimics the technique used in Section 5 of \cite{p4-dichi-bounded}. If $C$ is a strongly connected tournament, then $P$ is the empty path, so $\dichi(N(P))=0$. Assume then that $K$ is not strongly connected. If $P$ has at most four vertices, then $\dichi(N(P))\setminus N[K]\leq 4\gamma$. Assume then that $P$ has more than four vertices. Let $P'$ be the path $P$ with the first, the second first, the last, and the second-to-last vertices deleted. Let $Q$ be the set of these four vertices. We want to bound the dichromatic number of $N(P')\setminus (N[K]\cup N(Q))$.

Let $v_1,\dots, v_n$ denote the vertices of $P'$ labeled such that $v_iv_{i+1}\in A(D)$ for every $i\in \{1,\dots,m-1\}$. When talking about vertices $v$ in $N(P')$, we use \textit{first (in/out-)neighbour} of $v$ to refer to the vertex $v_i$ that is a (in/out-)neighbour of $v$ such that $i$ is minimized. Similarly, the \textit{last (in/out-)neighbour} of $v$ refers to the vertex $v_i$ that is a (in/out-)neighbour of $v$ such that $i$ is maximized. 

The rest of the proof is by cases. Notice that since $\mathcal{B}$ and $\mathcal{B'}$ have opposing consistent orientations, we may assume without loss of generality that $\mathcal{B}$ is of type 1 or type 3, and $\mathcal{B'}$ is of type 2 or type 4.

Let $A^-$ (resp $A^+$) be the set of vertices in $N(P')\setminus (N[K]\cup N(Q))$ such that their first neighbour is an in-neighbour (resp out-neighbour). Furthermore, let $B^+$ (resp. $B^-$) be the set of vertices in $N(P')\setminus (N[K]\cup N(Q))$ such that their last neighbour is an out-neighbour (resp. in-neighbour). The following claim will make the proofs of each case less repetitive. 

\sta{ \label{broom-types}
 The following are true.
}
\begin{itemize}
    \item \textit{If $\mathcal{B}$ is of type 1, then $\dichi(A^-)\leq 2\gamma(k+1)$.}

    \item \textit{If $\mathcal{B'}$ is of type 2, then $\dichi(A^+)\leq 2\gamma(k+1)$.}

    \item \textit{If $\mathcal{B}$ is of type 3, then $\dichi(B^-)\leq 2\gamma(k+1)$.}

    \item \textit{If $\mathcal{B'}$ is of type 4, then $\dichi(B^+)\leq 2\gamma(k+1)$.}
\end{itemize}

\begin{subproof}
    Let us prove the first bullet point. We will use Lemma \ref{k-nice-sets} to bound $\dichi(A^-)$. Let $i$ be the smallest integer such that $N^+(v_i)\cap A^-\not = \emptyset$. By the definition of $\gamma$, we have $\dichi(N^+(v_i)\cap A^-)\leq \gamma$. We claim $N^+(v_i)\cap A^-$ is a $k$-nice set in $D[A^-]$. Let $v\in N^+(v_i)\cap A^-$ be such that $v$ has at least $k$ in-neighbours and at least $k$ out-neighbours not in $A^-\setminus N^+(v_i)$. Let $S^+:=N^+(v)\cap (A^-\setminus N^+(v_i))$ be the set of out-neighbours of $v$ in $A^-$, and let $S^-:=N^-(v)\cap (A^-\setminus N^+(v_i))$ be the set of out-neighbours of $v$ in $A^-\setminus N^+(v_i)$. 
    
    Since vertices in $A^-$ have $v_i$ as their first neighbour, $v_{i-1}$, where we pick $v_{i-1}$ as the second vertex of $P$ if $i=1$, is anticomplete to $S^+\cup S^-$. Furthermore, since vertices in $S^+\cup S^-$ are not in $N^+(v_i)$, and $S^+\cup S^-\subseteq A^-$, we have that $v_i$ is anticomplete to $S^-\cup S^+$. 

    This, however, implies that $\{v_{i-1}, v_i, v\}\cup S^+\cup S^-$ contains a copy of $\mathcal{B}$. We conclude that every vertex in $N^+(v_i)\cap A^-$ contains at most $k$ out-neighbours in $A^-\setminus N^+(v_i)$. This finishes the proof that $N^+(v_i)\cap A^-$ is a $k$-nice set in $D[A^-]$, and so the proof that $\dichi(A^-)\leq 2\gamma(k+1)$. 
    
    Similar arguments prove the remaining bullet points.
\end{subproof}

In the following sections, we will prove that, for every case, $\dichi(N(P')\setminus (N[K]\cup N(Q)))\leq \gamma(4k+19)$. This will finish the proof since then,
\begin{align*}
    \dichi(N[K\cup X]) & \leq \dichi(N[K]) + \dichi(P)+\dichi(N(P)\setminus N[k]) + \dichi(Y) \\
    & \leq \omega(D)(\gamma+1)+2+4\gamma+\gamma(4k+19)+\gamma(2k+2) \\
    & = \omega(D)(\gamma+1)+\gamma(6k+25)+2,
\end{align*}
where the $4\gamma$ in the second line came from $N(Q)$, the neighbourhood of the vertices in $P$ that are not in $P'$. By Lemma \ref{k-nice-sets},
$$
\dichi(D)\leq 2(\omega(D)(\gamma+1)+\gamma(6k+25)+2)(k+1),
$$
as claimed.

\subsection{Brooms of type 1 and type 2}

Assume that $\mathcal{B}$ is a broom of type 1, and that $\mathcal{B'}$ is a broom of type 2. By (\ref{broom-types}), $\dichi(A^-\cup A^+)\leq 4\gamma(k+1)$. However, $A^-, A^+$ partitions $N(P')\setminus (N[K]\cup N(Q))$, so $\dichi(N(P')\setminus (N[K]\setminus N(Q)))\leq \gamma(4k+4)\leq \gamma(4k+19),$ as claimed.

\subsection{Brooms of type 3 and type 4}

Assume that $\mathcal{B}$ is a broom of type 3, and that $\mathcal{B'}$ is a broom of type 4. By (\ref{broom-types}), $\dichi(B^-\cup B^+)\leq 4\gamma(k+1)$. However, $B^-, B^+$ partitions $N(P')\setminus (N[K]\setminus N(Q))$, so $\dichi(N(P')\setminus (N[K]\setminus N(Q)))\leq \gamma(4k+4)\leq \gamma(4k+19)$, as claimed.

\subsection{Brooms of type 2 and type 3}

Assume that $\mathcal{B}$ is a broom of type 3, and that $\mathcal{B'}$ is a broom of type 2. By (\ref{broom-types}), $\dichi(A^+\cup B^-)\leq 4\gamma(k+1)$. Let $C'=N(P')\setminus (N[K]\cup N(Q)\cup A^+\cup B^-)$, and for every $i\in \{1,\dots,m\}$, let $L_i$ be the set of vertices $v$ in $C'$ such that $v_iv\in A(D)$ and $i$ is minimized. 

\sta{\label{dichi-bound-5}
For every $i\geq 1$ and $j\geq i+3$, if $v\in L_i$, then $N^+(v)\cap L_j=\emptyset$.
}
\begin{subproof}
Assume for a contradiction that there exists a vertex $u\in N^+(v)\cap L_j$. Since $u\not\in A^+$, $v_j$ is the first neighbour of $v$ in $P$. Since $u\not\in B^-$, $u$ has an out-neighbour $v_l$ in $P$ with $l>j$. But then we can shorten the path $P$ by replacing vertices $v_{i+1},\dots,v_{l-1}$ with $v$ and $u$. This contradicts $C$ is a PMCT.  
\end{subproof}
Set
$$
C_i=\{c\in L_j\text{ : }i\text{ is congruent to }j\text{ modulo }3\}.
$$
That is, for example, $C_1$ is the union of $L_1, L_4, L_7$, and so on. Furthermore, by (\ref{dichi-bound-5}), every strongly connected subdigraph $D'$ of $C_i$ is contained in a set $L_i$. Since $L_i$ is contained in the neighbourhood of $v_i$, it follows that $\dichi(L_i)\leq \gamma$. Thus, $\dichi(C_i)\leq \gamma$.

We can now finish the proof. Since $C_1, C_2, C_3$ is a partition of $C'$, it follows that $\dichi(C')\leq 3\gamma$. Thus, 
\begin{align*}
    \dichi(N(P')\setminus N[K])& \leq \dichi(A^-\cup B^-) + \dichi(C') \\
    & \leq 4\gamma(k+1) + 3\gamma \\
    & \leq \gamma(4k+7) \\
    & \leq \gamma(4k+19).
\end{align*}
as claimed.

\subsection{Brooms of type 1 and type 4}

Assume that $\mathcal{B}$ is a broom of type 1, and that $\mathcal{B'}$ is a broom of type 4. By (\ref{broom-types}), $\dichi(A^-\cup B^+)\leq 4\gamma(k+1)$. Let $C'=N(P')\setminus (N[K]\cup N(Q)\cup A^-\cup B^+)$. $C'$ does not contain a strongly connected tournament on $\omega(D)$ vertices by the minimality of $|V(P)|$. For every $i\in\{ 1,\dots,m\}$, let $L_i$ be the set of vertices $v$ in $C'$ such that $vv_i\in A(D)$ and $i$ is minimized. Notice that since $v\not\in A^-\cup B^+$, it follows that $v$ has both an in-neighbour and out-neighbour in $P'$, and so $L_1, \dots, L_m$ partitions $C'$. Finally, let $C_1, \dots, C_5$ be such that:
$$
C_i = \{c\in L_j\text{ : }i\text{ is congruent to }j\text{ modulo }5 \}.
$$
That is, for example, $C_1$ is the union of $L_1, L_6, L_{11}$, and so on. We will bound $\dichi(C_i)$ by partitioning each of $C_1,\dots,C_5$ into three sets each with a clique number strictly smaller than $\omega(D)$. This will imply that $\dichi(C')\leq 15\gamma$. The following claim will allow us to make such a partition.

\sta{\label{dichi-bound-3}
Let $1\leq i\leq 5$, and let $v\in C_i$. If $K_1$ and $K_2$ are tournaments in $C_i$ each of size $\omega(D)$, then $v$ is not both a sink vertex of $K_1$ and a source vertex of $K_2$.
}
\begin{subproof}
    For a contradiction, assume the claim does not hold. That is, suppose there exists a vertex $v$ and tournaments $K_1$ and $K_2$ each of size $\omega(D)$ such that $v$ is a source vertex of $K_1$ and a sink vertex of $K_2$. Let $u$ be a sink vertex in $K_1$ and $w$ be a source vertex in $K_2$. Note that this implies that $wv,vu\in A(D)$. Let $v_i$ and $v_j$ be the first out-neighbour and in-neighbour of $v$ respectively. Furthermore, let $v_x$ be the first in-neighbour of $w$, and let $v_y$ be the last out-neighbour of $u$. Since $v\not\in A^-\cup B^+$, we have $i<j$.  Furthermore, if $i\leq x$, then $K_1$ and $v_i\rightarrow \dots \rightarrow v_x$ contradict the minimality of $C$ as a PMCT. Thus, $x<i$. Using similar logic, we also have $j<y$. 
    
    By the definition of $C_i$, we have $x\cong j\mod{5}$, and since $x<j$, we have that $|x-j|\geq 5$. Consequently, the path $P''$ which is $P'$ with vertices $v_x, \dots, v_y$ replaced by $v_x, w, v, u, v_y$, is strictly smaller. This contradicts the minimality of $C$, thus finishing the proof.
\end{subproof}

\sta{\label{dichi-bound-4}
For every $1\leq i\leq 5$, we have $\dichi(C_i)\leq 3\gamma$.
}
\begin{subproof}
    Let $X_i$ (resp. $Y_i$) be the set of vertices $v\in N(P')\setminus (N[K]\cup N(Q))$ such that there exists a tournament $K'$ with $|K'|=\omega(D)$ in $C_i$ where $v$ is a sink vertex (resp. source vertex) of $K'$, and let $Z_i = C_i\setminus (X_i\cup Y_i)$. If $\omega(X_i) = \omega(D)$, then there exists a tournament $K'$ in $X_i$ with a source vertex $v$. But since $v\in X_i$, then $v$ is a sink vertex of another tournament, this contradicts (\ref{dichi-bound-3}). Thus, $\omega(X_i)<\omega(D)$. By similar logic, $\omega(Y_i)<\omega(D)$. As for $Z_i$, each $\omega(D)$-vertex tournament in $Z_i$ is strongly connected by the choice of $X_i$ and $Y_i$. But since $P\not=\emptyset$, this contradicts that $C$ is a PMCT. Thus, $\omega(Z_i)<\omega(D)$. We conclude $\dichi(C_i)\leq 3\gamma$.   
\end{subproof}

We can now finish the proof. Since $C_1, \dots, C_5$ is a partition of $C'$, it follows that $\dichi(C')\leq 15\gamma$. Thus, 
\begin{align*}
    \dichi(N(P')\setminus (N[K]\cup N(Q)))& \leq \dichi(A^-\cup B^+) + \dichi(C') \\
    & \leq 4\gamma(k+1) + 15\gamma \\
    & \leq \gamma(4k+19).
\end{align*}
as claimed, which finishes the proof of this case, and so the proof of Theorem \ref{dichi-main1}. \qed{}

\bibliographystyle{plain}
\bibliography{thebib.bib}

\end{document}